\newtheorem{theorem}{Theorem}
\newtheorem{remark}{Remark}
\newtheorem{lemma}{Lemma}
\newtheorem{proof}{Proof}
\newtheorem{definition}{Definition}
\newtheorem{fact}{Fact}
\newcommand{\x}{\mathbf{x}}
\newcommand{\y}{\mathbf{y}}
\newcommand{\g}{\mathbf{g}}
\newcommand{\m}{\mathbf{m}}
\newcommand{\1}{\mathbf{1}}
\newcommand{\0}{\mathbf{0}}
\newcommand{\vv}{\mathbf{v}}
\newcommand{\E}{\mathbb{E}}
\newcommand{\bO}{{\cal O}}
\newcommand{\bbR}{\mathbb{R}}
\newcommand{\F}{\mathcal{F}}
\newcommand{\G}{\mathbf{G}}
\newcommand{\bR}{\mathbf{R}}
\newcommand{\I}{\mathbf{I}}
\newcommand{\U}{\mathbf{U}}
\newcommand{\V}{\mathbf{V}}
\newcommand{\M}{\mathbf{M}}
\newcommand{\X}{\mathbf{X}}
\newcommand{\Y}{\mathbf{Y}}
\newcommand{\A}{\mathbf{A}}
\newcommand{\B}{\mathbf{B}}
\newcommand{\bL}{\mathbf{L}}
\newcommand{\bLambda}{\mathbf{\Lambda}}
\newcommand{\<}{\left\langle}
\renewcommand{\>}{\right\rangle}
\DeclareMathOperator*{\diag}{diag}
\DeclareMathOperator*{\tr}{tr}
\renewcommand{\vec}{\mbox{vec}}
\icmltitlerunning{Convergence Rate Analysis of the AdamW-style Shampoo}
\begin{document}

\onecolumn
\icmltitle{Convergence Rate Analysis of the AdamW-style Shampoo: \\Unifying One-Sided and Two-Sided Preconditioning}

\begin{icmlauthorlist}
\icmlauthor{Huan Li}{to}
\icmlauthor{Yiming Dong}{goo}
\icmlauthor{Zhouchen Lin}{goo}
\end{icmlauthorlist}

\icmlaffiliation{to}{Institute of Robotics and Automatic Information Systems, College of Artificial Intelligence, Nankai University, Tianjin, China.\\}
\icmlaffiliation{goo}{National Key Lab of General AI, School of Intelligence Science and Technology, Peking University, Beijing, China.\\}
\icmlcorrespondingauthor{Huan Li and Zhouchen Lin}{lihuanss@nankai.edu.cn, zlin@pku.edu.cn}





\vskip 0.3in



\printAffiliationsAndNotice{}  

\begin{abstract}
  This paper studies AdamW-style Shampoo, an effective variant of the classical Shampoo that won the external tuning track of the AlgoPerf neural network training competition \citep{AlgoPerf-competition-25}. Our analysis unifies one-sided and two-sided preconditioning. When the exponents of the two preconditioners sum to $1/2$, we establish the convergence rate $\frac{1}{K}\sum_{k=1}^K\E\left[\|\nabla f(\X_k)\|_*\right]\leq \bO(\frac{\sqrt{m+n}C}{K^{1/4}})$, where $K$ represents the number of iterations, $(m,n)$ denotes the dimensions of the matrix-valued parameters, and $C$ matches the constant appearing in the optimal convergence rate of SGD. Theoretically, the nuclear norm and Frobenius norm satisfy $\|\nabla f(\X)\|_F\leq \|\nabla f(\X)\|_*\leq \sqrt{\min\{m,n\}}\|\nabla f(\X)\|_F$. This suggests that our convergence rate is analogous to the optimal $\frac{1}{K}\sum_{k=1}^K\E\left[\|\nabla f(\X_k)\|_F\right]\leq \bO(\frac{C}{K^{1/4}})$ convergence rate of SGD in the ideal case where $\|\nabla f(\X)\|_*= \Theta(\sqrt{\min\{m,n\}})\|\nabla f(\X)\|_F$ and $m$ and $n$ are of comparable magnitude. Then, we extend our analysis to settings where the preconditioning exponents do not sum to $1/2$, and establish convergence with an explicit but more involved rate.
\end{abstract}

\section{Introduction}

Adaptive gradient methods have become the predominant optimizers for training deep neural networks, especially in large language models. The development of adaptive gradient algorithms has followed two distinct directions: diagonal preconditioning and non‑diagonal preconditioning. The former has developed through AdaGrad \citep{Duchi-2011-jmlr,McMahan-2010-colt}, RMSProp \citep{RMSProp-2012-hinton}, Adam \citep{adam-15-iclr}, and finally AdamW \citep{adanw-2019-iclr}, which have served as the de facto optimizers for training deep networks over the past decade. The latter lineage, originating from full-matrix AdaGrad \citep{Duchi-2011-jmlr} and advancing to methods such as K-FAC \citep{kfac-15}, Shampoo \citep{shampoo-icml-18}, SOAP \citep{soap-2025-iclr}, and Muon \citep{muon2024}, has recently demonstrated the potential to outperform its diagonal counterparts.

Diagonally preconditioned methods typically apply coordinate-wise scaling to the gradient. For instance, AdaGrad treats the network’s parameters as a single high-dimensional vector and updates them according to the following procedure
\begin{eqnarray}
\begin{aligned}\notag
\x_{k+1}=\x_k-\eta\Lambda_k^{-1/2}\g_k,\quad\Lambda_k=\diag\left(\sum_{t=1}^k\diag(\g_t\g_t^T)\right),
\end{aligned}
\end{eqnarray} 
where $\Lambda_k$ is a diagonal preconditioning matrix whose entries are the coordinate-wise sum of squared historical gradients. 

\begin{algorithm}[t]
    \caption{AdamW-style Shampoo}
    \label{alg1}
    \begin{algorithmic}
       \STATE Hyperparameters: $\eta,\theta,\beta,\lambda,\varepsilon$. Let $p,q\in(0,+\infty]$ with $\frac{1}{p}+\frac{1}{q}=2-\omega$ and $\omega\in(0,2]$.
       \STATE Denote $\bL_{k,\varepsilon}^{\pm\frac{1}{\infty}}=\I_m$ and $\bR_{k,\varepsilon}^{\pm\frac{1}{\infty}}=\I_n$.
       \STATE Initialize $\X_1$, $\M_0=\0$, $\bL_0=\0$, $\bR_0=\0$.
       \FOR{$k=1,2,\cdots,K$}
       \STATE $\G_k=\mbox{GradOracle}(\X_k)$
       \STATE $\M_k=\theta\M_{k-1}+(1-\theta)\G_k$
       \STATE $\bL_k=\beta \bL_{k-1}+(1-\beta)\G_k\G_k^T$
       \STATE $\bR_k=\beta \bR_{k-1}+(1-\beta)\G_k^T\G_k$
       \STATE $\bL_{k,\varepsilon}=\bL_k+\varepsilon\I_m,\quad \bR_{k,\varepsilon}=\bR_k+\varepsilon\I_n$
       \STATE $\X_{k+1}=(1-\lambda\eta)\X_k - \eta\bL_{k,\varepsilon}^{-\frac{1}{2p}}\M_k\bR_{k,\varepsilon}^{-\frac{1}{2q}}$
       \ENDFOR
    \end{algorithmic}
\end{algorithm}

In contrast, non-diagonally preconditioned methods exploit the inherent matrix structure of neural network parameters. For example, Shampoo operates according to the following formulation with two-sided preconditioning
\begin{eqnarray}
\begin{aligned}\label{shampoo}
&\hspace*{0.3cm}\X_{k+1}=\X_k-\eta\bL_k^{-1/4}\G_k\bR_k^{-1/4},\\
&\bL_k=\sum_{t=1}^k\G_t\G_t^T,\quad\bR_k=\sum_{t=1}^k\G_t^T\G_t,
\end{aligned}
\end{eqnarray} 
where the gradient $\G_k\in \bbR^{m\times n}$ is a matrix and $\bL_k\in\bbR^{m\times m}$ and $\bR_k\in\bbR^{n\times n}$ are non-diagonal preconditioners consisting of sum of historical gradient outer products. This can be regarded as using the Kronecker product $\bR_k^{1/2}\otimes \bL_k^{1/2}$ to approximate the full-matrix AdaGrad preconditioner $\sum_{t=1}^k \g_t\g_t^T$, where $\g_t=\vec(\G_t)$.

A key advantage of non-diagonally preconditioned methods is their ability to capture the cross-parameter correlations in gradient, thereby yielding a more informed search direction and potentially superior convergence compared to diagonal approaches. Recently, an implementation based on the distributed Shampoo \citep{distribute-shampoo-20,distribute-shampoo-23} won the external tuning track of the AlgoPerf neural network training competition \citep{AlgoPerf-competition-25}, demonstrating that non-diagonally preconditioned training algorithms can outperform currently popular diagonal preconditioning methods, such as Adam. The winning implementation achieved significantly accelerated training, with an average speedup of $28\%$ over the NAdamW \citep{Dozat-nadam-iclr-2016} baseline across eight deep learning workloads. Algorithm \ref{alg1} presents the core characteristics of the Shampoo variant implemented in \citep{distribute-shampoo-20,distribute-shampoo-23} in a non-distributed manner, including the exponential moving average of the first and second moment matrices, decoupled weight decay, and two-sided preconditioning with a tunable exponent. Recently, \citet{Runa-shampoo-26} empirically observed that setting the exponents $\frac{1}{2p}=\frac{1}{2q}=\frac{1}{2}$ outperforms the classical choice $\frac{1}{2p}=\frac{1}{2q}=\frac{1}{4}$.

Theoretically, convergence of diagonally preconditioned methods has been extensively studied \citep{bottou-2022-tmlr,luo-2020-iclr,lihuan-rmsprop-2024,luo-2022-nips,hong-2024-adam,haochuanli-2023,Li-2025-nips}. For non-diagonal methods, Muon represents the first method to receive a rigorous convergence analysis for nonconvex optimization due to its simple structure \citep{muon-hongmingyi-25,Muon-NS-ICLR-26,muon-shen-25,muon-chen-25,muon-sato-25}. Analyses of other optimizers in this class, such as Shampoo, have largely been confined to convex settings. For example, \citet{shampoo-icml-18} established the regret bound of the classical Shampoo (\ref{shampoo}) within the online convex optimization framework, \citet{shampoo-xie-icml-25} provided a unified convergence analysis including full-matrix AdaGrad and \underline{\textit{one-sided}} variant of Shampoo for convex problems, where the update $\bL_k^{-1/4}\G_k\bR_k^{-1/4}$ in the original Shampoo is replaced by $\bL_k^{-1/2}\G_k$, \citet{KangAn-nips-25} proposed ASGO, effectively equivalent to \underline{\textit{one-sided}} Shampoo, and studied its convergence for convex programming. 

To the best of our knowledge, \citep{shampoo-xie-25} appears to be the only work prior to ours that establishes the convergence of Shampoo in the nonconvex setting.  However, their study is limited to the \underline{\textit{one-sided}} variant of Shampoo in the \underline{\textit{AdaGrad-style}} and \underline{\textit{RMSProp-style}}, and does not address the more complex yet more commonly used two-sided preconditioning, which has been empirically observed to perform better than the one-sided variant \citep[Takeaway \#3]{Runa-shampoo-26}. Furthermore, their analysis does not incorporate momentum or decoupled weight decay. While other works \citep{Sketchy-2023,Morwani-2025-iclr,shampoo-Runa-25,KL-Shampoo-2025} have explored Shampoo from different perspectives, none has provided a convergence guarantee for the nonconvex case.

\subsection{Contributions}

In this paper, we study the AdamW-style Shampoo method presented in Algorithm \ref{alg1}, which provides a unified treatment of two-sided ($p,q<+\infty$) and one-sided ($p=1,q=+\infty$ or $p=+\infty,q=1$) preconditioning. This formulation captures key components of the most effective practical implementations of Shampoo \citep{distribute-shampoo-20,distribute-shampoo-23}, as evidenced by its success in the AlgoPerf competition \citep{AlgoPerf-competition-25}. Our contributions are twofold.

1. In the classical setting where $\frac{1}{p}+\frac{1}{q}=1$, we establish the following convergence rate of Algorithm \ref{alg1} for nonconvex programming
\begin{eqnarray}
\begin{aligned}\label{shampoo-rate1}
\frac{1}{K}\sum_{k=1}^K\E\left[\left\|\nabla f(\X_k)\right\|_*\right]\leq \bO\left(\sqrt{m+n}\max\left\{\sqrt[4]{\frac{\sigma^2L\left(f(\X_1)-f^*\right)}{K}},\sqrt{ \frac{L\left(f(\X_1)-f^*\right)}{K} }\right\}\right) 
\end{aligned}
\end{eqnarray} 
and show that $\lambda\|\X_k\|_{op}<1$ for all $k=1,2,\cdots,K$, where the notations can be found in Section \ref{sec:assumption}. For comparison, the classical convergence rate of SGD is \citep{Bottou-2018-siam}
\begin{eqnarray}
\begin{aligned}\label{rate-sgd}
&\frac{1}{K}\sum_{k=1}^K\E\left[\left\|\nabla f(\X_k)\right\|_F\right]\leq \bO\left(\sqrt[4]{\frac{\sigma^2L\left(f(\X_1)-f^*\right)}{K}}\right),
\end{aligned}
\end{eqnarray} 
which matches the lower bound of nonconvex stochastic optimization \citep{Arjevani-2023-mp}. Since Frobenius norm and nuclear norm satisfy 
\begin{eqnarray}
\begin{aligned}\notag
&\|\nabla f(\X)\|_F\leq \|\nabla f(\X)\|_*\leq \sqrt{\min\{m,n\}}\|\nabla f(\X)\|_F,
\end{aligned}
\end{eqnarray} 
our convergence rate also aligns with the same lower bound with respect to all the coefficients in the ideal case where $\|\nabla f(\X)\|_*=\Theta(\sqrt{\min\{m,n\}})\|\nabla f(\X)\|_F$ and $m$ and $n$ are of comparable magnitude, which is verified empirically on real training of GPT-2 in our experiment.

2. We then extend the analysis to the more general setting $\frac{1}{p}+\frac{1}{q}=2-\omega$ for any $\omega\in(0,2]$, and establish the following convergence rate for Algorithm 1 without decoupled weight decay (that is, $\lambda=0$)
\begin{eqnarray}
\begin{aligned}\label{shampoo-rate2}
\frac{1}{K}\sum_{k=1}^K\E\left[\left\||\nabla f(\X_k)|^{\omega}\right\|_*\right]
\leq \bO\left(\frac{(m+n)^{1-\frac{\omega}{2}}}{\omega K^{\frac{\omega}{4}}} \right),
\end{aligned}
\end{eqnarray}
where $|\A|^{\omega}=(\A^T\A)^{\omega/2}$ and $\||\A|^{\omega}\|_*=\sum_i\sigma_i(\A)^{\omega}$. When $\omega=1$, the convergence rate (\ref{shampoo-rate2}) reduces to the same order as rate (\ref{shampoo-rate1}). When $\omega=2$, Algorithm \ref{alg1} reduces to momentum SGD and the rate (\ref{shampoo-rate2}) reduces to rate (\ref{rate-sgd}) via the identity $\left\||\nabla f(\X_k)|^2\right\|_*=\left\|\nabla f(\X_k)\right\|_F^2$. For other $\omega$ in $(0,2]$, comparing the convergence rates (\ref{shampoo-rate2}), (\ref{shampoo-rate1}), and (\ref{rate-sgd}) is not straightforward, as $\omega$ appears on both sides of (\ref{shampoo-rate2}).

\subsection{Problem Setting, Notation, and Assumptions}\label{sec:assumption}

In this paper, we study the following nonconvex problem with matrix-valued parameters 
\begin{equation}\label{problem}
\min_{\X\in\bbR^{m\times n}} f(\X),
\end{equation}
where $f(\X)=\E_{\zeta\in\mathcal{P}}[f(\X;\zeta)]$ and $\zeta$ is the sample drawn from the data distribution $\mathcal{P}$.

We denote vectors by lowercase bold letters and matrices by uppercase bold letters. We use $\I_m$ for the identity matrix in $\mathbb R^{m\times m}$. For vectors, we denote $\|\cdot\|$ as the $\ell_2$ Euclidean norm. For matrices, denote $\|\cdot\|_F$, $\|\cdot\|_{op}$, and $\|\cdot\|_*$ as the Frobenius norm, spectral norm (largest singular value), and nuclear norm (sum of singular values), respectively. The trace of a square matrix is written as $\tr(\cdot)$. Let $\F_k=\sigma(\G_1,\G_2,\cdots,\G_k)$ denote the sigma-algebra of the stochastic gradients up to $k$, $\E_{\F_k}[\cdot]$ the expectation with respect to $\F_k$, and $\E_k[\cdot|\F_{k-1}]$ the conditional expectation with respect to $\G_k$ given $\F_{k-1}$. For brevity, we write $\E[\cdot]$ for $\E_{\F_K}[\cdot]$. Let $f^*$ denote the lower bound of $f(\X)$. Denote the singular values of $\A\in \bbR^{m\times n}$ by $\sigma_1(\A),\cdots,\sigma_r(\A)$ in a nonincreasing order with $r=\min\{m,n\}$. Finally, following \citep{rajendra-book}, we denote $|\A|=(\A^T\A)^{1/2}$, where the matrix powers are defined as follows
\begin{definition}\label{def-X-p}
For symmetric positive semidefinite matrix $\X\in\mathbb R^{m\times m}$, let $\U\bLambda\U^T$ be its eigenvalue decomposition with $\bLambda=\diag(\lambda_1,\lambda_2,\cdots,\lambda_m)$, then $\X^p$ is defined to be $\U\diag(\lambda_1^p,\lambda_2^p,\cdots,\lambda_m^p)\U^T$. 
\end{definition}
Let $\U\Sigma\V^T$ be the singular value decomposition (SVD) of $\A$, then we have
\begin{eqnarray}
\begin{aligned}\label{matrix-power}
|\A|=\V\Sigma\V^T\quad\mbox{and}\quad |\A|^p=\V\Sigma^p\V^T.
\end{aligned}
\end{eqnarray}
We make the following assumptions throughout this paper:
\begin{enumerate}
\item Smoothness: $\|\nabla f(\Y)-\nabla f(\X)\|_F\leq L\|\Y-\X\|_F, \forall \X,\Y$,
\item Unbiased estimator: $\E_k\left[\G_k\big|\F_{k-1}\right]=\nabla f(\X_k)$,
\item Bounded row-wise and column-wise second central moment matrices: \\
$\E_k\left[\left(\G_k-\nabla f(\X_k)\right)\left(\G_k-\nabla f(\X_k)\right)^T\big|\F_{k-1}\right]\preceq \Sigma_L$,\\
$\E_k\left[\left(\G_k-\nabla f(\X_k)\right)^T\left(\G_k-\nabla f(\X_k)\right)\big|\F_{k-1}\right]\preceq \Sigma_R$\\ 
for some symmetric positive semidefinite matrices $\Sigma_L$ and $\Sigma_R$.
\end{enumerate}
The first two assumptions are identical to the standard assumptions used in the analysis of SGD, while the third assumption is more restrictive than that in SGD analysis. In fact, from the third assumption, it readily follows that
\begin{equation}
\E_k\left[\left\|\G_k-\nabla f(\X_k)\right\|_F^2\big|\F_{k-1}\right]\leq \frac{\tr\left(\Sigma_L\right)+\tr\left(\Sigma_R\right)}{2}\equiv\sigma^2,\label{bounded-variance}
\end{equation}
which is the standard bounded variance assumption in SGD analysis.

\section{Convergence Rate of AdamW-style Shampoo for $\omega=1$}

Based on Assumptions 1-3, we establish the convergence rate of Algorithm \ref{alg1} with $\omega=1$ in the following theorem. By the definitions of $\bL_{k,\varepsilon}$ and $\bR_{k,\varepsilon}$, condition (\ref{L-assump}) always holds with $\hat\varepsilon=\varepsilon$. 

\begin{theorem}\label{main-theorem}
Suppose that Assumptions 1-3 and condition
\begin{eqnarray}
\begin{aligned}\label{L-assump}
\bL_{k,\varepsilon}\succeq\hat\varepsilon\I_m,\quad \bR_{k,\varepsilon}\succeq\hat\varepsilon\I_n
\end{aligned}
\end{eqnarray}
hold for some $\hat\varepsilon\geq\varepsilon$. Let $\hat\sigma^2=\max\left\{\sigma^2,\frac{L\left(f(\X_1)-f^*\right)}{K\gamma^2}\right\}$ with any $\gamma\in(0,1)$, $\frac{1}{p}+\frac{1}{q}=1$, $1-\theta=\sqrt{\frac{L\left(f(\X_1)-f^*\right)}{K\hat\sigma^2}}$, $\theta\leq\beta\leq\sqrt{\theta}$, $\varepsilon=\frac{\tau\hat\sigma^2}{m+n}$ where $\tau\leq 1$ is the hyperparameter to control the small numerical value of $\varepsilon$ used in practice, $\eta=\sqrt{\frac{\hat\varepsilon\left(f(\X_1)-f^*\right)}{4LK\hat\sigma^2}}$, $0\leq\lambda\leq\frac{1}{\sqrt{1152\hat\varepsilon}K^{3/4}}\sqrt[4]{\frac{L^3\hat\sigma^2}{f(\X_1)-f^*}}$, and $\|\X_1\|_{op}\leq\sqrt{\frac{\hat\varepsilon K\left(f(\X_1)-f^*\right)}{L\hat\sigma^2}}$. Then for Algorithm \ref{alg1}, we have $\lambda\|\X_k\|_{op}<1$ for all $k=1,2,\cdots,K$ and
\begin{eqnarray}
\begin{aligned}\label{rate1}
\frac{1}{K}\hspace*{-0.05cm}\sum_{k=1}^K\hspace*{-0.05cm}\E\hspace*{-0.05cm}\left[\left\|\nabla f(\X_k)\right\|_*\right]\hspace*{-0.07cm}\leq\hspace*{-0.07cm} \left(\hspace*{-0.07cm}8\sqrt{m\hspace*{-0.05cm}+\hspace*{-0.05cm}n}\hspace*{-0.05cm}+\hspace*{-0.05cm}\frac{119\hat\sigma}{\sqrt{\hat\varepsilon}} \hspace*{-0.05cm}\right)\hspace*{-0.05cm}\max\hspace*{-0.05cm}\left\{\hspace*{-0.07cm}\sqrt[4]{\frac{\sigma^2L\left(f(\X_1)\hspace*{-0.05cm}-\hspace*{-0.05cm}f^*\right)}{K}},  \hspace*{-0.05cm}\sqrt{ \frac{ L\left(f(\X_1)\hspace*{-0.05cm}-\hspace*{-0.05cm}f^*\right)}{K\gamma} }\right\}\hspace*{-0.05cm}.\hspace*{-0.13cm} 
\end{aligned}
\end{eqnarray} 
In the worst case where only $\hat\varepsilon=\varepsilon$ is guaranteed, we have
\begin{eqnarray}
\begin{aligned}\label{rate2}
\frac{1}{K}\sum_{k=1}^K\E\left[\left\|\nabla f(\X_k)\right\|_*\right]\leq 127\sqrt{\frac{m+n}{\tau}}\max\left\{\sqrt[4]{\frac{\sigma^2L\left(f(\X_1)-f^*\right)}{K}},  \sqrt{ \frac{ L\left(f(\X_1)-f^*\right)}{K\gamma} }\right\}. 
\end{aligned}
\end{eqnarray} 
Furthermore, when $\tau=1$, we achieve the best theoretical convergence rate
\begin{eqnarray}
\begin{aligned}\label{rate3}
\frac{1}{K}\sum_{k=1}^K\E\left[\left\|\nabla f(\X_k)\right\|_*\right]\leq 127\sqrt{m+n}\max\left\{\sqrt[4]{\frac{\sigma^2L\left(f(\X_1)-f^*\right)}{K}},  \sqrt{ \frac{ L\left(f(\X_1)-f^*\right)}{K\gamma} }\right\}. 
\end{aligned}
\end{eqnarray} 
\end{theorem}

\subsection{Discussion on $\varepsilon$ and $\hat\varepsilon$}

In practice, the parameter $\varepsilon$ is typically set to a very small value, such as $10^{-12}$, as used in \citep{distribute-shampoo-23}. On the other hand, in modern large language models, the dimensions of weight matrices optimized by non-diagonally preconditioned methods such as Shampoo/SOAP/Muon are moderate in size. For instance, in GPT-3 with $175$ billion parameters, the QKV projection matrices have dimensions $m=n=12288$, while the weight matrices in the feed-forward network layer have dimensions $(m,n)=(12288,49152)$ or $(49152,12288)$. Consequently, the quantity $\frac{\hat\sigma^2}{m+n}$ is several orders of magnitude larger than the typical practical value of $\varepsilon$. To account for this gap, we parameterize $\varepsilon$ using a scaling factor $\tau$, which better aligns the analysis with practical configurations. This adjustment, however, yields a weaker convergence bound, as shown in (\ref{rate2}), which depends explicitly on $\tau$. The impractical setting $\tau=1$ yields the best convergence rate given in (\ref{rate3}). 

To bridge this gap between theory and practice, we further introduce condition (\ref{L-assump}). Informally, the preconditioners $\bL_k$ and $\bR_k$ can be regarded as approximating $\E\left[\G\G^T\right]$ and $\E\left[\G^T\G\right]$, respectively. In the training of modern large language models, empirical evidence indicates that the gradient norm remains $\bO(1)$ \citep[Figure 7]{kaiyue-compare-2025}. Consequently, Lemma \ref{condition2-lemma} suggests that condition (\ref{L-assump}) can reasonably hold with $\hat\varepsilon=\bO(\frac{1}{m+n})$. 
As illustrated in Figure \ref{fig:min_sval_noise} in Section \ref{sec-exp}, this condition is empirically satisfied during GPT‑2 training for a moderate value of $\hat\varepsilon$, which remains orders of magnitude larger than $\varepsilon$. 
With condition (\ref{L-assump}), our derived convergence rate (\ref{rate1}) depends only on $\hat\varepsilon$, rather than $\varepsilon$ or $\tau$. When $\hat\varepsilon\geq\frac{\hat\sigma^2}{m+n}$, convergence rate (\ref{rate1}) matches (\ref{rate3}) , even for arbitrarily small $\tau$. This represents a trade-off between theoretical guarantees and practical behavior. From the perspective of worst-case theoretical bounds, (\ref{rate3}) yields the optimal convergence rate, and condition (\ref{L-assump}) can be removed since it is always satisfied with $\hat\varepsilon=\varepsilon$. From a practical standpoint, (\ref{rate1}) better explains why an extremely small $\varepsilon$ does not hinder fast convergence in real-world scenarios.

\begin{lemma}\label{condition2-lemma}
When each entry of $\G\in\mathbb R^{m\times n}$ is generated independently from Gaussian distribution with mean $\mu$ and variance $\xi^2$, we have
\begin{eqnarray}
\begin{aligned}\notag
&\E\left[\G\G^T\right]\succeq n\xi^2\I_m=\frac{\xi^2}{m(\xi^2+\mu^2)}\E\left[\|\G\|_F^2\right]\I_m,\\
&\E\left[\G^T\G\right]\succeq m\xi^2\I_n=\frac{\xi^2}{n(\xi^2+\mu^2)}\E\left[\|\G\|_F^2\right]\I_n.
\end{aligned}
\end{eqnarray}
\end{lemma}

\subsection{Optimality of Our Convergence Rate}

Comparing the optimal convergence rate (\ref{rate-sgd}) of SGD with our best theoretical convergence rate (\ref{rate3}), we observe that our result is measured in the nuclear norm and contains an additional factor of $\sqrt{m+n}$. Let $\sigma_1,\sigma_2,\cdots,\sigma_r$ denote the singular values of $\nabla f(\X)$ with $r=\min\{m,n\}$, the Frobenius norm and nuclear norm satisfy 
\begin{eqnarray}
\begin{aligned}\notag
&\|\nabla f(\X)\|_F=\sqrt{\sum_{i=1}^r\sigma_i^2}\leq \sum_{i=1}^r\sigma_i=\|\nabla f(\X)\|_*,\\
&\|\nabla f(\X)\|_*=\sum_{i=1}^r\sigma_i\leq \sqrt{r\sum_{i=1}^r\sigma_i^2}=\sqrt{r}\|\nabla f(\X)\|_F.
\end{aligned}
\end{eqnarray} 
This means that our rate also aligns with the lower bound in nonconvex stochastic optimization \citep{Arjevani-2023-mp} in the ideal case where $\|\nabla f(\X)\|_*=\Theta(\sqrt{\min\{m,n\}})\|\nabla f(\X)\|_F$ and $m$ and $n$ are of comparable magnitude, as verified empirically in our GPT-2 training experiments demonstrated in Figure \ref{fig:nuc_fro_ratio} in Section \ref{sec-exp}. 

According to the recent work \citep{jiang-adagrad-2024}, the lower bound for AdaGrad is $\E\left[\min_{1\leq k\leq K}\|\nabla f(\x_k)\|_1\right]=\Theta\left(\frac{\sqrt{d}}{K^{1/4}}\sqrt[4]{\sigma^2L\left(f(\x_1)-f^*\right)}\right)$
for a constructed objective function $f(\x)$ under standard assumptions of $L$-smoothness and $\sigma^2$-bounded variance (\ref{bounded-variance}), where $d$ denotes the dimension of the variable. Consider the following simplified scenario: during the iteration process, the left and right singular matrices of $\G_k$ remain fixed, with $\U\Sigma_k\V^T$ being its compact singular value decomposition (SVD), then Shampoo is equivalent to running AdaGrad on the singular values in the sense of 
$\bL_k^{-1/4}\G_k\bR_k^{1/4}=\U\left(\sum_{t=1}^k\Sigma_t^2\right)^{-1/4}\Sigma_k\left(\sum_{t=1}^k\Sigma_t^2\right)^{-1/4}\V^T$. 
Since AdamW-style Shampoo further extends Shampoo, and given the unavoidable $\sqrt{d}$ dependence in AdaGrad's lower bound, we conjecture that the convergence rate derived in our paper is sharp and that the factor $\sqrt{m+n}$ cannot be eliminated.

\subsection{Unifying Two-Sided and One-Sided Preconditioning}

When $p,q<+\infty$, Algorithm \ref{alg1} employs two-sided preconditioning; for instance, setting $p=q=2$ recovers the original Shampoo update proposed in \citep{shampoo-icml-18}. If either $p$ or $q$ is infinite, the algorithm reduces to the one-sided preconditioning analyzed in \citep{shampoo-xie-icml-25,KangAn-nips-25,shampoo-xie-25}. Specifically, the update $\bL_{k,\varepsilon}^{-\frac{1}{2p}}\M_k\bR_{k,\varepsilon}^{-\frac{1}{2q}}$ reduces to the left-sided preconditioning $\bL_{k,\varepsilon}^{-\frac{1}{2}}\M_k$ when $p=1$ and $q=+\infty$, and to the right-sided preconditioning $\M_k\bR_{k,\varepsilon}^{-\frac{1}{2}}$ when $p=+\infty$ and $q=1$. Our analysis framework permits any positive values $p$ and $q$ (including infinite ones) satisfying $\frac{1}{p}+\frac{1}{q}=1$. Under this constraint, decreasing $p$ (and thus increasing $q$) strengthens the left preconditioner and weakens the right one; increasing $p$ has the opposite effect. At the extremes $p=1,q=+\infty$ and $p=+\infty,q=1$, one of the two preconditioners disappears, leaving only the other active. Empirically, \citet{distribute-shampoo-20} and \citet{distribute-shampoo-23} observed that treating the exponents $1/2p$ and $1/2q$ as tunable hyperparameters can lead to improved performance. Intuitively, the left preconditioning $\bL_{k,\varepsilon}^{-\frac{1}{2}}\M_k$ captures within-column correlations of $\M_k$, while the right preconditioning $\M_k\bR_{k,\varepsilon}^{-\frac{1}{2}}$ captures within-row correlations of $\M_k$. Two-sided preconditioning $\bL_{k,\varepsilon}^{-\frac{1}{2p}}\M_k\bR_{k,\varepsilon}^{-\frac{1}{2q}}$ combines these advantages and captures correlations both across rows and across columns of $\M_k$.

\subsection{AdamW-style Shampoo as AdamW in the Singular Value Geometry}

\citet{Li-2025-nips} established, for AdamW, the convergence rate 
\begin{eqnarray}
\begin{aligned}\notag
\frac{1}{K}\sum_{k=1}^K\E\left[\left\|\nabla f(\x_k)\right\|_1\right]\leq\bO\left(\frac{\sqrt{d}}{K^{1/4}}\sqrt[4]{\sigma^2L(f(\x_1)-f^*)}+\sqrt{\frac{dL(f(\x_1)-f^*)}{K}}\right)
\end{aligned}
\end{eqnarray} 
while ensuring $\lambda\|\x_k\|_{\infty}<1$ for all $k=1,2,\cdots,K$, where $d$ is the dimension. Comparing with (\ref{rate3}), we observe that AdamW-style Shampoo employs the nuclear norm in its convergence rate and spectral norm in its implicit bias, which correspond respectively to the $\ell_1$ and $\ell_{\infty}$ norms of the singular values, respectively. Consequently, AdamW-style Shampoo can be interpreted as exhibiting theoretical behavior analogous to that of AdamW, but in the space of singular values. 

\subsection{AdamW-style Shampoo as Muon with STR Preconditioning}

Let $\U_k\Sigma_k\V_k^T$ be the compact singular value decomposition (SVD) of $\M_k$. Using the identities $\left(\M_k\M_k^T\right)^{\frac{1}{2p}}=\U_k\Sigma_k^{\frac{1}{p}}\U_k^T$, $\left(\M_k^T\M_k\right)^{\frac{1}{2q}}=\V_k\Sigma_k^{\frac{1}{q}}\V_k^T$, and $\frac{1}{p}+\frac{1}{q}=1$, we can rewrite the update $\bL_{k,\varepsilon}^{-\frac{1}{2p}}\M_k\bR_{k,\varepsilon}^{-\frac{1}{2q}}$ in AdamW-style Shampoo as\footnote{Concurrent with our conference paper, we  noticed that \citet{Runa-shampoo-26} discovered a similar formula.} 
\begin{eqnarray}
\begin{aligned}\notag
\bL_{k,\varepsilon}^{-\frac{1}{2p}}\left(\M_k\M_k^T\right)^{\frac{1}{2p}}\U_k\V_k^T\left(\M_k^T\M_k\right)^{\frac{1}{2q}}\bR_{k,\varepsilon}^{-\frac{1}{2q}}.
\end{aligned}
\end{eqnarray} 
Informally, $\M_k$, $\bL_k$, and $\bR_k$ can be interpreted as approximations to the first moment matrix $\E\left[\G\right]$, the row-wise second raw moment matrix $\E\left[\G\G^T\right]$, and the column-wise second raw moment matrix $\E\left[\G^T\G\right]$, respectively. Noting the decomposition $\E\left[\G\G^T\right]=(\E[\G])(\E[\G])^T+\E\left[(\G-\E[\G])(\G-\E[\G])^T\right]$, the two quantities $\bL_{k,\varepsilon}^{-\frac{1}{2p}}\left(\M_k\M_k^T\right)^{\frac{1}{2p}}$ and $\left(\M_k^T\M_k\right)^{\frac{1}{2q}}\bR_{k,\varepsilon}^{-\frac{1}{2q}}$ can be interpreted as row-wise and column-wise signal-to-total-energy ratio (STR) matrices, respectively. This conceptually extends the scalar notation of the original Adam to matrices \citep{adam-15-iclr}. Consequently, AdamW-style Shampoo can be viewed as an STR preconditioned variant of Muon. This relationship is analogous to that between AdamW and SignSGD \citep{Orvieto-2025}.

\subsection{Restriction on the Weight Decay Parameter}

Our theory requires the weight decay parameter $\lambda$ to be sufficiently small. To illustrate why some restriction on $\lambda$ is necessary for convergence, we follow \citep{Li-2025-nips} to consider a simple stochastic convex problem
\begin{eqnarray}
\begin{aligned}\label{toy-example}
f(\X)=\frac{\|\X-\X^*\|_F^2}{200}\quad\mbox{with}\quad \X^*=\left[\begin{array}{cc}4,&4\\4,&4\end{array}\right].
\end{aligned}
\end{eqnarray}
The stochastic gradient oracle is given by
\begin{eqnarray}
\begin{aligned}\notag
\G(\X)=\left\{
\begin{array}{ll}
\X-\X^*-\A, & \mbox{with probability 0.1}, \\
-\frac{1}{10}(\X-\X^*-\frac{10}{9}\A), & \mbox{with probability 0.9}. \\\end{array}
\right.
\end{aligned}
\end{eqnarray}
We initialize $\X_1\hspace*{-0.06cm}=\hspace*{-0.06cm}\X^*\hspace*{-0.03cm}+\hspace*{-0.03cm}\left[\hspace*{-0.2cm}\begin{array}{cc}-1,&-3\\3,&1\end{array}\hspace*{-0.2cm}\right]$ and set $\A\hspace*{-0.06cm}=\hspace*{-0.06cm}\left[\hspace*{-0.2cm}\begin{array}{cc}10,&10\\10,&10\end{array}\hspace*{-0.2cm}\right]$, $K=10^{9}$, $\theta=1-\frac{1}{\sqrt{K}}$, $\beta=\sqrt{\theta}$, $\eta=\frac{1}{\sqrt{K}}$, $\varepsilon=10^{-12}$, $p=q=2$, $\M_0=\0$, $\bL_0=\0$, $\bR_0=\0$ for Algorithm \ref{alg1}. We test $\lambda=\{10^{-1},10^{-2},10^{-3},10^{-4},10^{-5},0\}$. For these choices, $\|\X^*\|_{op}=8<\frac{1}{\lambda}$ and $\|\X_1\|_{op}\leq9.2<\frac{1}{\lambda}$. Figure \ref{figure5} demonstrates that Algorithm \ref{alg1} fails to converge to $\X^*$ for $\lambda\in\{10^{-1},10^{-2},10^{-3},10^{-4}\}$. This indicates that, even for a simple convex problem, convergence to the minimum solution is not guaranteed if $\lambda$ exceeds a certain threshold, although the tightness of our upper bound on $\lambda$ remains unclear.

\begin{figure}
\hspace*{2cm}\begin{tabular}{@{\extracolsep{0.001em}}c@{\extracolsep{0.001em}}c}
   \includegraphics[width=0.36\linewidth,keepaspectratio]{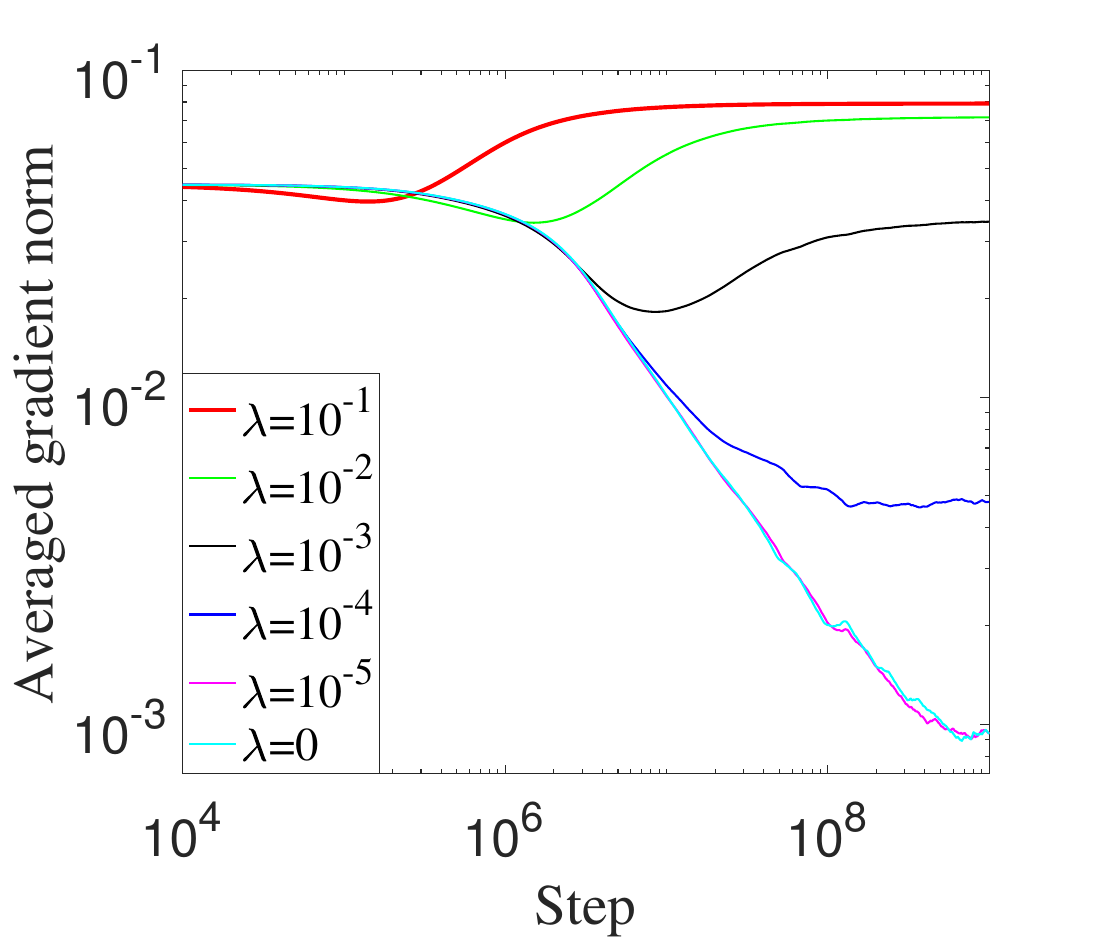}
   &\includegraphics[width=0.36\linewidth,keepaspectratio]{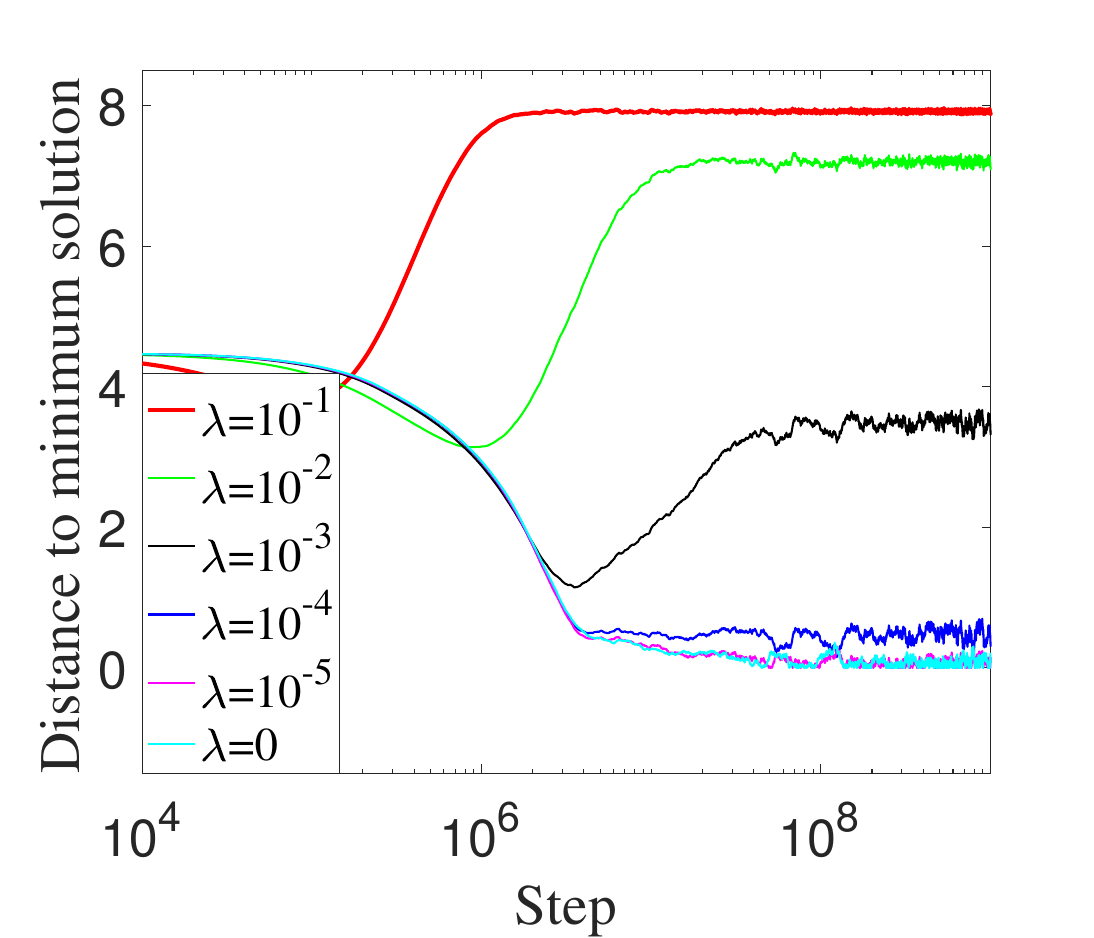}
   \end{tabular}
   \caption{Illustrations of $\frac{1}{k}\sum_{t=1}^k\|\nabla f(\X_t)\|_F$ (left) and $\|\X_k-\X^*\|_F$ (right) over steps on the toy example (\ref{toy-example}).}
   \label{figure5}
\end{figure}

\subsection{Comparison with \citep{shampoo-xie-25} and \citep{Gratton-2026}}

While \citet{shampoo-xie-25} established convergence for adaptive optimizers under a nonconvex, adaptive smoothness framework, our work differs in several key aspects. First, we provide a unified treatment of two-sided and one-sided Shampoo, addressing the more complex and widely used former variant, while \citet{shampoo-xie-25} only studied the latter. Second, we incorporate practical components such as momentum, decoupled weight decay, and tunable exponents satisfying $\frac{1}{2p}+\frac{1}{2q}=1-\frac{\omega}{2}$ for any $\omega\in(0,2]$, none of which are considered in \citep{shampoo-xie-25}. Third, we utilize a stochastic expectation-based assumption, which is weaker than the deterministic assumption $-\Sigma_L\preceq\G\G^T-\nabla f(\X)\nabla f(\X)^T\preceq\Sigma_L$ used by \citet{shampoo-xie-25}. Finally, while both analyses achieve the $\bO(\frac{C}{K^{1/4}})$ convergence rate, the constant $C$ in our convergence rate (\ref{rate3}) is simpler and matches the one appearing in the optimal convergence rate of SGD under the standard Euclidean smoothness assumption.

\citet{Gratton-2026}\footnote{This paper appeared online three months after the ICML submission and first appearance of our paper on arXiv.} provided a unified convergence theory for adaptive first-order methods, including AdaNorm, full-matrix AdaGrad, diagonal AdaGrad, Shampoo, and AdaGo \citep{AdaGo-2025}. However, at the algorithmic level, \citet{Gratton-2026} only considered the AdaGrad-style update of the preconditioners in Shampoo—rather than the more practical RMSProp-style—and they omitted decoupled weight decay. At the theoretical level, they relied on the stronger assumption $\E_k\left[\left\|\G_k-\nabla f(\X_k)\right\|_F^2|\F_{k-1}\right]\leq\frac{\sigma^2}{(k+1)^{\alpha}}+c\E_k\left[\left\|\bL_k^{-1/4}\G_k\bR_k^{-1/4}\right\|_F^2|\F_{k-1}\right]$ for some $\alpha,c>0$, and obtained a convergence rate of $\bO(\frac{1}{K^{\alpha/2}})$ for $\alpha<1$ and $\bO(\frac{1}{\sqrt{K}})$ when $\alpha\geq 1$, where the dependence on $d$ is hidden in the $\bO(\cdot)$ notation. Letting $\alpha\rightarrow0$ and $c\rightarrow0$ reduces this assumption to the one commonly used in nonconvex stochastic optimization, but under that limit their convergence rate $\bO(\frac{1}{K^{\alpha/2}})$ becomes extremely slow.

\section{Convergence Rate of Algorithm \ref{alg1} without Decoupled Weight Decay for Any $\omega\in(0,2]$}

In this section, we extend our analysis framework to the more general setting $\frac{1}{p}+\frac{1}{q}=2-\omega$ for any $\omega\in(0,2]$. When $\omega=2$, we have $p=q=+\infty$, and Algorithm \ref{alg1} reduces to momentum SGD. As $\omega$ decreases, the sum of the exponents $\frac{1}{2p}+\frac{1}{2q}$ approaches 1; in this regime, \citet{Runa-shampoo-26,distribute-shampoo-20,distribute-shampoo-23} empirically observed that larger exponents $\frac{1}{2p}$ and $\frac{1}{2q}$ yield better performance than the traditional choice $\frac{1}{2p}=\frac{1}{2q}=\frac{1}{4}$. The following theorem establishes the convergence rate for Algorithm \ref{alg1} without decoupled weight decay (that is, $\lambda=0$) for any $\omega\in(0,2]$.

\begin{theorem}\label{main-theorem2}
Suppose that Assumptions 1-3 and conditions $\bL_{k,\varepsilon}\succeq\hat\varepsilon\I_m$ and $\bR_{k,\varepsilon}\succeq\hat\varepsilon\I_n$ hold for some $\hat\varepsilon\geq\varepsilon$. Let $\hat\sigma^2=\max\left\{\sigma^2,\frac{L\left(f(\X_1)-f^*\right)}{K\gamma^2}\right\}$ with any $\gamma\in(0,1)$, $\frac{1}{p}+\frac{1}{q}=2-\omega$ with any $\omega\in(0,2]$, $1-\theta=\sqrt{\frac{L\left(f(\X_1)-f^*\right)}{K\hat\sigma^2}}$, $\eta=\hat\varepsilon^{1-\omega/2}\sqrt{\frac{f(\X_1)-f^*}{4LK\hat\sigma^2}}$, $\varepsilon=\frac{\tau\hat\sigma^2}{m+n}$ where $\tau\leq 1$ is the hyperparameter to control the small numerical value of $\varepsilon$ used in practice, and $\beta\in(0,1)$ be a constant independent of $K$. Then for Algorithm \ref{alg1} with $\lambda=0$, we have 
\begin{eqnarray}
\begin{aligned}\label{rate4}
&\frac{1}{K}\sum_{k=1}^K\E\left[\left\||\nabla f(\X_k)|^{\omega}\right\|_*\right]\\
\leq& \frac{20\hat\sigma^{\frac{3\omega}{2}-\frac{\omega^2}{2}}(m+n)^{(1-\frac{\omega}{2})^2}}{\omega \hat\varepsilon^{\frac{\omega}{2}-\frac{\omega^2}{4}}}\left(\frac{L\left(f(\X_1)-f^*\right)}{K}\right)^{\frac{\omega}{4}} + \frac{10\hat C}{\hat\varepsilon^{1-\frac{\omega}{2}}}\left(\frac{\hat\sigma^2L\left(f(\X_1)-f^*\right)}{K}\right)^{\frac{1}{2}},
\end{aligned}
\end{eqnarray}
where $\hat C=\left(\frac{2(1-\beta)^{\omega/2}}{1-\beta^{\omega/2}}\right)^{2/\omega-1}$ is a constant independent of $K$. In the worst case where only $\hat\varepsilon=\varepsilon$ is guaranteed, we have
\begin{eqnarray}
\begin{aligned}\label{rate5}
&\frac{1}{K}\hspace*{-0.06cm}\sum_{k=1}^K\hspace*{-0.06cm}\E\hspace*{-0.06cm}\left[\left\||\nabla f(\X_k)|^{\omega}\right\|_*\right]\hspace*{-0.06cm}\leq\hspace*{-0.06cm} \frac{20(m\hspace*{-0.06cm}+\hspace*{-0.06cm}n)^{1\hspace*{-0.03cm}-\hspace*{-0.03cm}\frac{\omega}{2}}}{\omega\tau^{\frac{\omega}{2}-\frac{\omega^2}{4}}}\hspace*{-0.06cm}\max\hspace*{-0.06cm}\left\{\hspace*{-0.06cm}\sigma^{\frac{\omega}{2}}\hspace*{-0.06cm}\left(\hspace*{-0.06cm}\frac{L\hspace*{-0.06cm}\left(f(\X_1)\hspace*{-0.06cm}-\hspace*{-0.06cm}f^*\right)}{K}\hspace*{-0.06cm}\right)^{\hspace*{-0.06cm}\frac{\omega}{4}}\hspace*{-0.06cm},\hspace*{-0.06cm}\left(\hspace*{-0.06cm}\frac{L\hspace*{-0.06cm}\left(f(\X_1)\hspace*{-0.06cm}-\hspace*{-0.06cm}f^*\right)}{K\gamma}\right)^{\hspace*{-0.06cm}\frac{\omega}{2}}\hspace*{-0.06cm}\right\}\\
&\hspace*{1.2cm}+\hspace*{-0.06cm}\frac{10\hat C(m\hspace*{-0.06cm}+\hspace*{-0.06cm}n)^{1\hspace*{-0.03cm}-\hspace*{-0.03cm}\frac{\omega}{2}}}{\tau^{1-\frac{\omega}{2}}}\hspace*{-0.06cm}\left\{\hspace*{-0.06cm}\begin{array}{cc}
\max\hspace*{-0.06cm}\left\{\hspace*{-0.06cm}\sigma^{\omega-1}\hspace*{-0.06cm}\left(\hspace*{-0.06cm}\frac{L\left(f(\X_1)-f^*\right)}{K}\right)^{\hspace*{-0.06cm}\frac{1}{2}}\hspace*{-0.06cm},\hspace*{-0.06cm}\frac{1}{\gamma^{\omega-1}}\hspace*{-0.06cm}\left(\hspace*{-0.06cm}\frac{L\left(f(\X_1)-f^*\right)}{K}\right)^{\hspace*{-0.06cm}\frac{\omega}{2}}\hspace*{-0.06cm}\right\}\hspace*{-0.06cm}, & 1\hspace*{-0.06cm}\leq\hspace*{-0.06cm} \omega\hspace*{-0.06cm}\leq\hspace*{-0.06cm} 2,\\
\frac{1}{\gamma^{\omega-1}}\left(\frac{L\left(f(\X_1)-f^*\right)}{K}\right)^{\frac{\omega}{2}}, & 0\hspace*{-0.06cm}<\hspace*{-0.06cm}\omega\hspace*{-0.06cm}<\hspace*{-0.06cm} 1.
\end{array}\right.  \hspace*{-1cm}
\end{aligned}
\end{eqnarray}
\end{theorem}

Theorem \ref{main-theorem2} establishes the $\bO(\frac{1}{K^{\omega/4}})$ convergence rate for $\frac{1}{K}\sum_{k=1}^K\E\left[\left\||\nabla f(\X_k)|^{\omega}\right\|_*\right]$. When $\omega=1$, convergence rate (\ref{rate5}) becomes
\begin{eqnarray}
\begin{aligned}\notag
&\frac{1}{K}\hspace*{-0.05cm}\sum_{k=1}^K\hspace*{-0.05cm}\E\hspace*{-0.05cm}\left[\left\|\nabla f(\X_k)\right\|_*\right]
\hspace*{-0.05cm}\leq\hspace*{-0.05cm} 20\sqrt{m\hspace*{-0.05cm}+\hspace*{-0.05cm}n}\sqrt[4]{\frac{\sigma^2L\hspace*{-0.05cm}\left(f(\X_1)\hspace*{-0.05cm}-\hspace*{-0.05cm}f^*\right)}{K\tau}} \hspace*{-0.05cm}+\hspace*{-0.05cm} (20\hspace*{-0.05cm}+\hspace*{-0.05cm}10\hat C)\sqrt{\frac{(m\hspace*{-0.05cm}+\hspace*{-0.05cm}n)L\hspace*{-0.05cm}\left(f(\X_1)\hspace*{-0.05cm}-\hspace*{-0.05cm}f^*\right)}{ K\tau\gamma}},
\end{aligned}
\end{eqnarray}
which is the same order as rate (\ref{rate2}). When $\omega=2$, Algorithm \ref{alg1} reduces to momentum SGD and convergence rate (\ref{rate5}) becomes
\begin{eqnarray}
\begin{aligned}\notag
\frac{1}{K}\sum_{k=1}^K\E\left[\left\|\nabla f(\X_k)\right\|_F^2\right]
\leq (10+10\hat C)\max\left\{\sqrt{\frac{\sigma^2L\left(f(\X_1)-f^*\right)}{K}},\frac{L\left(f(\X_1)-f^*\right)}{ K\gamma}\right\},
\end{aligned}
\end{eqnarray}
since $\left\||\nabla f(\X_k)|^2\right\|_*=\left\|\nabla f(\X_k)\right\|_F^2$, which is the same as the rate of SGD. When $\omega<1$, the term $\frac{1}{K^{\omega/4}}$ is larger than $\frac{1}{K^{1/4}}$; when $1<\omega<2$, it becomes smaller. However, due to the presence of $\omega$ on the left-hand side of (\ref{rate5}), it is difficult to determine whether the convergence rate (\ref{rate5}) is faster or slower than rate (\ref{rate3}).

\begin{remark}
We explain why we do not consider decoupled weight decay for the setting $\omega\in(0,2]$. As discussed above, when $\omega=2$, Algorithm \ref{alg1} reduces to momentum SGD. With the momentum further removed, decoupled weight decay is equivalent to $\ell_2$ regularization, which minimizes problem
\begin{eqnarray}
\begin{aligned}\label{wd-p1}
f(\X)+\frac{\lambda}{2}\|\X\|_F^2.
\end{aligned}
\end{eqnarray}
When $\omega=1$, for the classical AdamW, \citet{xie-2024-adamw-icml} proved that if the iterates of AdamW converge to some $\x_{\infty}$, then $\x_{\infty}$ is a KKT point of the constrained problem 
\begin{eqnarray}
\begin{aligned}\notag
\min_{\x\in\mathbb R^d} f(\x),\quad \mbox{s.t.}\quad \|\x\|_{\infty}\leq\frac{1}{\lambda}.
\end{aligned}
\end{eqnarray}
Accordingly, by extending from the vector case to the matrix case, we conjecture that AdamW-style Shampoo solves problem
\begin{eqnarray}
\begin{aligned}\label{wd-p2}
\min_{\X\in\mathbb R^{m\times n}} f(\X),\quad \mbox{s.t.}\quad \|\X\|_{op}\leq\frac{1}{\lambda}
\end{aligned}
\end{eqnarray}
when $\omega=1$. Our proof framework unifies the settings for all $\omega\in(0,2]$. If we were to include decoupled weight decay, then, in addition to the technical challenges, we would need to unify the optimality conditions of problems (\ref{wd-p1}) and (\ref{wd-p2}), whether such a unification is possible remains unclear.
\end{remark}

\section{Proof of the Theorems}

In Section \ref{sec:basic}, we first recall several basic facts from matrix analysis that are used in our proof. We then present our techniques to address the challenging two-sided preconditioning in Sections \ref{sec-two-sided-preconditioning1} and \ref{sec-two-sided-preconditioning2}, which constitute the primary technical contribution of this paper relative to existing literature. Lemmas \ref{finite-lemma}, \ref{gradient-bound}, and \ref{second-moment-bound} in Section \ref{sec-two-sided-preconditioning1} hold for any $\omega\in(0,2]$, while Lemmas \ref{bound-X}, \ref{bound-update-lemma}, and \ref{spectral-norm-bound} in Section \ref{sec-two-sided-preconditioning2} only hold for $\omega=1$. Finally, we give the complete proofs of Theorems \ref{main-theorem} and \ref{main-theorem2} in Sections \ref{main-theorem-proof-sec} and \ref{sec:proof2}, respectively, with supporting lemmas given in Section \ref{sec:supprt-lemmas}.

\subsection{Basic Properties from Matrix Analysis}\label{sec:basic}

We first introduce some basic properties from matrix analysis. For matrices $\A,\B\in\bbR^{m\times n}$, it holds that
\begin{eqnarray}
\begin{aligned}\notag
&\<\A,\B\>=\sum_{i=1}^m\sum_{j=1}^n \A_{i,j}\B_{i,j}=\tr(\A^T\B),\\
&\|\A\|_F^2=\sum_{i=1}^m\sum_{j=1}^n \A_{i,j}^2=\tr(\A^T\A),\\
&\tr(\A^T\B)=\tr(\B\A^T).
\end{aligned}
\end{eqnarray}

\begin{lemma}\citep[Lemma V.1.5, Proposition V.1.6]{rajendra-book}\label{matrix-inequality-lemma}
For any matrices $\A\in\mathbb R^{m\times s}$ and $\X,\Y\in\mathbb R^{m\times m}$ with $\X\preceq\Y$, it holds that
\begin{eqnarray}
\begin{aligned}\label{matrix-inequality}
\A^T\X\A\preceq \A^T\Y\A,\quad\tr\left(\X\right)\leq \tr\left(\Y\right).
\end{aligned}
\end{eqnarray}
For symmetric positive definite matrices $\X,\Y\in\mathbb R^{m\times m}$ with $\X\preceq\Y$, it holds that
\begin{eqnarray}
\begin{aligned}\label{matrix-inequality2}
\X^{-1}\succeq\Y^{-1}.
\end{aligned}
\end{eqnarray}
\end{lemma}

\begin{fact}\label{PSD-eig-singular}
For a symmetric positive semidefinite matrix $\X\in\mathbb R^{m\times m}$, the singular values coincide with the eigenvalues and thus $\tr(\X)=\sum_{i=1}^m\sigma_i(\X)$.
\end{fact}

\begin{lemma}\label{matrix-root-inequality}\citep{Ando-1999}
For symmetric positive semidefinite matrices $\X,\Y\in\bbR^{m\times m}$ and $0<p\leq 1$, it holds that
\begin{eqnarray}
\begin{aligned}\notag
\tr\left(\left(\X+\Y\right)^p\right)\leq \tr\left(\X^p+\Y^p\right).
\end{aligned}
\end{eqnarray}
In particular, for $p=1/2$, we have $\tr\left(\left(\X+\Y\right)^{1/2}\right)\leq \tr\left(\X^{1/2}+\Y^{1/2}\right)$.
\end{lemma}

\begin{lemma}\label{trace-nuclear-equation}
For any matrix $\X\in\bbR^{m\times n}$, it holds that $\left\||\X|^{\omega}\right\|_*=\tr\left((\X\X^T)^{\omega/2}\right)$.
\end{lemma}
\begin{proof}
Let $\U\Sigma\V^T$ be the compact singular value decomposition (SVD) of $\X$ with $r=\mbox{rank}(\X)$, $\U\in\bbR^{m\times r}$, $\Sigma\in\bbR^{r\times r}$, and $\V\in\bbR^{n\times r}$. From (\ref{matrix-power}), we have $|\X|^{\omega}=\V\Sigma^{\omega}\V^T$. On the other hand, since $(\X\X^T)^{\omega/2}=(\U\Sigma^2\U^T)^{\omega/2}=\U\Sigma^{\omega}\U^T$, we have $\left\||\X|^{\omega}\right\|_*=\tr\left(\Sigma^{\omega}\right)=\tr\left((\X\X^T)^{\omega/2}\right)$.
\end{proof}
Since $x^p$ is operator monotonic and operator concave on $[0,\infty)$ for $p\in[0,1]$, we have the following two properties \citep[Theorem V.1.9, Theorem V.2.5]{rajendra-book}:
\begin{lemma}\label{matrix-monotonic}
For symmetric positive semidefinite matrices $\X,\Y\in\bbR^{m\times m}$ with $\X\preceq\Y$, it holds that $\X^p\preceq \Y^p$ with $p\in[0,1]$.
\end{lemma}
\begin{lemma}\label{matrix-concavity}
For symmetric positive semidefinite matrix $\X\in\mathbb R^{m\times m}$, it holds that $\E\left[\X^p\right]\preceq(\E[\X])^p$ with $p\in[0,1]$.
\end{lemma}
From Assumptions 2-3, we have
\begin{eqnarray}
\begin{aligned}\label{G-assump1}
\E_k\left[\left\|\G_k-\nabla f(\X_k)\right\|_F^2\big|\F_{k-1}\right]=&\E_k\left[\tr\left(\left(\G_k-\nabla f(\X_k)\right)\left(\G_k-\nabla f(\X_k)\right)^T\right)\big|\F_{k-1}\right]\\
=&\E_k\left[\tr\left(\left(\G_k-\nabla f(\X_k)\right)^T\left(\G_k-\nabla f(\X_k)\right)\right)\big|\F_{k-1}\right]\\
\leq& \frac{\tr\left(\Sigma_L\right)+\tr\left(\Sigma_R\right)}{2}\equiv\sigma^2,
\end{aligned}
\end{eqnarray}
and
\begin{eqnarray}
\begin{aligned}\label{G-assump2}
\Sigma_L\succeq&\E_k\left[\left(\G_k-\nabla f(\X_k)\right)\left(\G_k-\nabla f(\X_k)\right)^T\big|\F_{k-1}\right]\\
=&\E_k\left[\G_k\G_k^T+\nabla f(\X_k)\nabla f(\X_k)^T-\G_k\nabla f(\X_k)^T-\nabla f(\X_k)\G_k^T\big|\F_{k-1}\right]\\
=&\E_k\left[\G_k\G_k^T\big|\F_{k-1}\right]-\nabla f(\X_k)\nabla f(\X_k)^T.
\end{aligned}
\end{eqnarray}

\subsection{Bounding the Nuclear Norm of Gradient by Holder's Inequality in Singular Value Space}\label{sec-two-sided-preconditioning1}

In the analysis of AdamW, \citet{Li-2025-nips} employs the following inequality to measure the expected gradient $\ell_1$ norm summation
\begin{eqnarray}
\begin{aligned}\label{adam-inequality}
\left(\sum_{k=1}^K\E\left[\|\nabla f(\x_k)\|_1\right]\right)^2\leq\left(\sum_{k=1}^K\sum_{i=1}^d\E\left[\frac{\left|\nabla_i f(\x_k)\right|^2}{\sqrt{\widetilde\vv_{k,i}+\varepsilon}}\right]\right)\hspace*{-0.15cm}\left(\sum_{k=1}^K\sum_{i=1}^d\E\left[\sqrt{\widetilde\vv_{k,i}+\varepsilon}\right]\right)\hspace*{-0.5cm}
\end{aligned}
\end{eqnarray}
by Holder's inequality for some $\widetilde\vv_k$ to approximate the second moment, where $\widetilde\vv_{k,i}$ denotes the $i$-th element of vector $\widetilde\vv$ at the $k$-th iteration. To handle the more complex matrix case with two-sided preconditioning, we instead utilize the following lemma. 

\begin{lemma}\label{UIN-Holder-inequality}\citep[(IV.33), Exercise IV.2.7]{rajendra-book}
Let $\A_i\in\mathbb R^{m\times m}$ and $p_i$, $r$ be positive real numbers ($i=1,2,\cdots,t$) such that $\sum_{i=1}^t\frac{1}{p_i}=\frac{1}{r}$. Then
\begin{eqnarray}
\begin{aligned}\label{UIN-Holder}
\left\|\left|\Pi_{i=1}^t\A_i\right|^{r}\right\|_*^{1/r}\leq \Pi_{i=1}^t\left\|\left|\A_i\right|^{p_i}\right\|_*^{1/p_i}.
\end{aligned}
\end{eqnarray}
\end{lemma}
Recalling (\ref{matrix-power}) and the definition of nuclear norm, inequality (\ref{UIN-Holder}) can be equivalently rewritten as
\begin{eqnarray}
\begin{aligned}\notag
\left(\sum_{j=1}^m\Big(\sigma_j\left(\Pi_{i=1}^t\A_i\right)\Big)^r\right)^{1/r}\leq \Pi_{i=1}^t\left(\sum_{j=1}^m\Big(\sigma_j\left(\A_i\right)\Big)^{p_i}\right)^{1/p_i},
\end{aligned}
\end{eqnarray}
which can be regarded as Holder's inequality in the space of singular values. Note that Lemma \ref{UIN-Holder-inequality} holds for all $r>0$, not restricted to $r\geq 1$. This result extends readily to rectangular matrices, because we can always obtain a square matrix by appending zero columns to the right and zero rows to the bottom of the original matrix, and these appended zeros do not affect the non-zero singular values and matrix multiplication, for example,
\begin{eqnarray}
\begin{aligned}\notag
\left(\hspace*{-0.03cm}\begin{array}{cc}
\A_{m\times n} & \0_{m\times (d-n)}\\
\0_{(d-m)\times n} & \0_{(d-m)\times (d-n)}\\
\end{array}\hspace*{-0.03cm}\right)\left(\hspace*{-0.03cm}\begin{array}{cc}
\B_{n\times s} & \0_{n\times (d-s)}\\
\0_{(d-n)\times s} & \0_{(d-n)\times (d-s)}\\
\end{array}\hspace*{-0.03cm}\right)=\left(\hspace*{-0.03cm}\begin{array}{cc}
(\A\B)_{m\times s} & \0_{m\times (d-s)}\\
\0_{(d-m)\times s} & \0_{(d-m)\times (d-s)}\\
\end{array}\hspace*{-0.03cm}\right).
\end{aligned}
\end{eqnarray}
\begin{remark}
In our conference paper \citep{li-2026-adamw-shampoo-icml}, we use the Schatten-$p$ norm defined as
\begin{eqnarray}
\begin{aligned}\notag
\|\A\|_{S_p}=\left\{\begin{array}{cc}
\Big(\sum_{j=1}^m\left(\sigma_j(\A)\right)^p\Big)^{1/p}, & 1\leq p<\infty,\\
\sigma_1(\A), & p=\infty.
\end{array}\right.
\end{aligned}
\end{eqnarray}
It is straightforward to verify that $\|\A\|_{S_p} = \||\A|^p\|_*^{1/p}$. However, for $0<p<1$, the quantity $\|\A\|_{S_p}$ does not satisfy the triangle inequality and is therefore not a norm, although Schatten-$p$ Holder's inequality used in our conference paper also holds for $0<p<1$. To avoid ambiguity when dealing with this regime, we follow \citep{rajendra-book} to use the notation $\||\A|^p\|_*^{1/p}$ in this paper.
\end{remark}
Since Holder's inequality $\sum_{i} x_iy_i\leq \left(\sum_{i} x_i^p\right)^{1/p}\left(\sum_{i} y_i^q\right)^{1/q}$ requires that both $\sum_{i} x_i^p$ and $\sum_{i} y_i^q$ be finite, we first establish the following lemma to guarantee that Holder's inequality is valid, although Lemma \ref{finite-lemma} appears intuitively obvious.
\begin{lemma}\label{finite-lemma}
Suppose that Assumptions 1-3 hold. Then for Algorithm \ref{alg1}, $\nabla f(\X_k)$, $\bL_k$, and $\bR_k$ are almost surely finite (that is, the absolute value of every entry is $<+\infty$ with probability one) for all $k=1,2,\cdots,K$. We also have that $\E\left[\left\|\bL_{k,\varepsilon}^{-\frac{1}{4p}}\nabla f(\X_k)\bR_{k,\varepsilon}^{-\frac{1}{4q}}\right\|_F^2\right]$, $\E\left[\tr\left(\bL_k^{\omega/2}\right)\right]$, and $\E\left[\tr\left(\bR_k^{\omega/2}\right)\right]$ are finite (that is, $<+\infty$).
\end{lemma}
\begin{proof}
We first prove by induction that $\X_k$, $\nabla f(\X_k)$, $\M_{k-1}$, $\bL_{k-1}$, and $\bR_{k-1}$ are almost surely finite for all $k=1,2,\cdots,K+1$. The base case $k=1$ holds trivially. Assume the statement is true for $k=t$, where $t>1$. Taking expectation with respect to $\F_{k-1}$ on the bounded-variance condition (\ref{G-assump1}), we know that $\left\|\G_t-\nabla f(\X_t)\right\|_F^2$ is almost surely finite, otherwise $P(\left\|\G_t-\nabla f(\X_t)\right\|_F^2=+\infty)>0$ would imply $\E\left[\left\|\G_t-\nabla f(\X_t)\right\|_F^2\right]=+\infty$, contradicting (\ref{G-assump1}). Hence $\G_t-\nabla f(\X_t)$ is almost surely finite. By the induction hypothesis $\nabla f(\X_k)$ is almost surely finite, so $\G_t$ is also almost surely finite. Using the update rules for $\M_t$, $\bL_t$ and $\bR_t$ in Algorithm \ref{alg1} together with the induction hypothesis, we conclude that $\M_t$, $\bL_t$, and $\bR_t$ are almost surely finite. From condition (\ref{L-assump}) and property (\ref{matrix-inequality2}), we have $\bL_{k,\varepsilon}^{-1}\preceq\frac{1}{\hat\varepsilon}\I_m$ and $\bR_{k,\varepsilon}^{-1}\preceq\frac{1}{\hat\varepsilon}\I_n$. Since $\frac{1}{2p}\leq 1$ and $\frac{1}{2q}\leq 1$, the matrix functions $\X^{1/2p}$ and $\X^{1/2q}$ are operator monotonic (Lemma \ref{matrix-monotonic}), which yields $\bL_{k,\varepsilon}^{-1/2p}\preceq\frac{1}{\hat\varepsilon^{1/2p}}\I_m$ and $\bR_{k,\varepsilon}^{-1/2q}\preceq\frac{1}{\hat\varepsilon^{1/2q}}\I_n$. From the update of $\X_{t+1}$ in Algorithm \ref{alg1}, and the fact that $\bL_{k,\varepsilon}^{-1/2p}\preceq\frac{1}{\hat\varepsilon^{1/2p}}\I_m$, $\bR_{k,\varepsilon}^{-1/2q}\preceq\frac{1}{\hat\varepsilon^{1/2q}}\I_n$, and $\M_k$ is almost surely finite, it follows that $\X_{t+1}$ is almost surely finite. From Assumption 1, we have $\|\nabla f(\X_{t+1})\|_F\leq\|\nabla f(\X_{t+1})-\nabla f(\0)\|_F+\|\nabla f(\0)\|_F\leq L\|\X_{t+1}\|_F+\|\nabla f(\0)\|_F$, and thus $\|\nabla f(\X_{t+1})\|_F$ is almost surely finite. Hence the claim holds for $k=t+1$. By induction, we know that $\X_k$, $\nabla f(\X_k)$, $\M_{k-1}$, $\bL_{k-1}$, and $\bR_{k-1}$ are almost surely finite for all $k=1,2,\cdots,K+1$.

Next, we prove by induction that $\E\left[\|\X_k\|_F^2\right]$, $\E\left[\|\nabla f(\X_k)\|_F^2\right]$, $\E\left[\|\M_{k-1}\|_F^2\right]$, $\E\left[\tr\left(\bL_{k-1}\right)\right]$, and $\E\left[\tr\left(\bR_{k-1}\right)\right]$ are finite for all $k=1,2,\cdots,K+1$. The base case holds trivially. Assume that all the above quantities are finite for $k=t$. From the bounded-variance condition (\ref{G-assump1}) and the inequality $\E\left[\|\G_t\|_F^2\right]\leq 2\E\left[\|\G_t-\nabla f(\X_t)\|_F^2\right]+2\E\left[\|\nabla f(\X_t)\|_F^2\right]$ together with the induction hypothesis, we deduce that $\E\left[\|\G_t\|_F^2\right]$ is finite. The update rules for $\M_t$, $\bL_t$ and $\bR_t$ in Algorithm \ref{alg1} give
\begin{eqnarray}
\begin{aligned}\notag
&\E\left[\|\M_t\|_F^2\right]\leq 2\theta^2\E\left[\|\M_{t-1}\|_F^2\right] + 2(1-\theta)^2\E\left\|\G_t\|_F^2\right],\\
&\E\left[\tr\left(\bL_t\right)\right]=\beta \E\left[\tr\left(\bL_{t-1}\right)\right]+(1-\beta)\E\left[\|\G_t\|_F^2\right],\\
&\E\left[\tr\left(\bR_t\right)\right]=\beta \E\left[\tr\left(\bR_{t-1}\right)\right]+(1-\beta)\E\left[\|\G_t\|_F^2\right].
\end{aligned}
\end{eqnarray}
By the induction hypothesis, $\E\left[\|\M_t\|_F^2\right]$, $\E\left[\tr\left(\bL_t\right)\right]$, $\E\left[\tr\left(\bR_t\right)\right]$ are finite. For the iterate $\X_{t+1}$, the update in Algorithm \ref{alg1} yields
\begin{eqnarray}
\begin{aligned}\notag
\E\left[\|\X_{t+1}\|_F^2\right]\leq& 2(1-\lambda\eta)^2\E\left[\|\X_t\|_F^2\right]+2\eta^2\E\left[\left\|\bL_{k,\varepsilon}^{-\frac{1}{2p}}\M_k\bR_{k,\varepsilon}^{-\frac{1}{2q}}\right\|_F^2\right]\\
\leq& 2(1-\lambda\eta)^2\E\left[\|\X_t\|_F^2\right]+\frac{2\eta^2}{\hat\varepsilon^{\frac{1}{p}+\frac{1}{q}}}\E\left[\|\M_k\|_F^2\right],
\end{aligned}
\end{eqnarray}
where the second inequality follows from the estimate
\begin{eqnarray}
\begin{aligned}\label{equ14}
\left\|\bL_{k,\varepsilon}^{-\frac{1}{2p}}\M_k\bR_{k,\varepsilon}^{-\frac{1}{2q}}\right\|_F^2=&\tr\left(\bR_{k,\varepsilon}^{-\frac{1}{2q}}\M_k^T\bL_{k,\varepsilon}^{-\frac{1}{p}}\M_k\bR_{k,\varepsilon}^{-\frac{1}{2q}}\right)\leq\frac{1}{\hat\varepsilon^{\frac{1}{p}}}\tr\left(\bR_{k,\varepsilon}^{-\frac{1}{2q}}\M_k^T\M_k\bR_{k,\varepsilon}^{-\frac{1}{2q}}\right)\\
=&\frac{1}{\hat\varepsilon^{\frac{1}{p}}}\tr\left(\M_k\bR_{k,\varepsilon}^{-\frac{1}{q}}\M_k^T\right)\leq\frac{1}{\hat\varepsilon^{\frac{1}{p}+\frac{1}{q}}}\tr\left(\M_k\M_k^T\right)=\frac{1}{\hat\varepsilon^{\frac{1}{p}+\frac{1}{q}}}\left\|\M_k\right\|_F^2,
\end{aligned}
\end{eqnarray}
which uses property (\ref{matrix-inequality}), $\bL_{k,\varepsilon}^{-1/p}\preceq\frac{1}{\hat\varepsilon^{1/p}}\I_m$, and $\bR_{k,\varepsilon}^{-1/q}\preceq\frac{1}{\hat\varepsilon^{1/q}}\I_n$. So $\E\left[\|\X_{t+1}\|_F^2\right]$ is finite. From Assumption 1, we have 
\begin{eqnarray}
\begin{aligned}\notag
\E\left[\|\nabla f(\X_{t+1})\|_F^2\right]\leq& 2\E\left[\|\nabla f(\X_{t+1})-\nabla f(\0)\|_F^2\right]+2\|\nabla f(\0)\|_F^2\\
\leq& 2L^2\E\left[\|\X_{t+1}\|_F^2\right]+2\|\nabla f(\0)\|_F^2,
\end{aligned}
\end{eqnarray}
and thus $\E\left[\|\nabla f(\X_{t+1})\|_F^2\right]$ is also finite. Thus the claim holds for $k=t+1$. Similar to (\ref{equ14}), we also have that $\E\left[\left\|\bL_{k,\varepsilon}^{-\frac{1}{4p}}\nabla f(\X_k)\bR_{k,\varepsilon}^{-\frac{1}{4q}}\right\|_F^2\right]$ is finite as well. 

Finally, we turn to $\E\left[\tr\left(\bL_k^{\omega/2}\right)\right]$ and $\E\left[\tr\left(\bR_k^{\omega/2}\right)\right]$. The two inequalities are analogous, and we only prove the first. For a symmetric positive semidefinite matrix $\A$, we have $\mbox{det}\left(\begin{array}{cc}
\A_{i,i} & \A_{i,j}\\
\A_{j,i} & \A_{j,j}\\
\end{array}\right)=\A_{i,i}\A_{j,j}-\A_{i,j}\A_{j,i}=\A_{i,i}\A_{j,j}-\A_{i,j}^2\geq 0$ and thus $|\A_{i,j}|\leq \sqrt{\A_{i,i}\A_{j,j}}\leq \tr(\A)$. Using that $\E\left[\tr(\bL_k)\right]$ is finite, we conclude that $\E\left[\bL_k\right]$ is finite. Since the function $\X^{\omega/2}$ is operator concave for $0<\omega/2\leq 1$ (Lemma \ref{matrix-concavity}), we have $\E\left[\bL_k^{\omega/2}\right]\preceq \left(\E\left[\bL_k\right]\right)^{\omega/2}$. Hence both $\E\left[\bL_k^{\omega/2}\right]$ and $\E\left[\tr\left(\bL_k^{\omega/2}\right)\right]$ are finite.
\end{proof}

With Lemma \ref{finite-lemma} in place, we can use Lemma \ref{UIN-Holder-inequality} and Holder's inequality to generalize (\ref{adam-inequality}) from vectors to matrices.
\begin{lemma}\label{gradient-bound}
Suppose that Assumptions 1-3 hold. Let $\frac{1}{p}+\frac{1}{q}=2-\omega$ with $0<\omega\leq 2$. Then for Algorithm \ref{alg1}, we have
\begin{eqnarray}
\begin{aligned}\notag
&\sum_{k=1}^K\E\left[\left\||\nabla f(\X_k)|^{\omega}\right\|_*\right]\\
\leq&\left(\sum_{k=1}^K\E\left[\tr\left(\bL_{k,\varepsilon}^{\omega/2}\right)+\tr\left(\bR_{k,\varepsilon}^{\omega/2}\right)\right]\right)^{1-\omega/2}\left(\sum_{k=1}^K\E\left[\left\|\bL_{k,\varepsilon}^{-\frac{1}{4p}}\nabla f(\X_k)\bR_{k,\varepsilon}^{-\frac{1}{4q}}\right\|_F^2\right]\right)^{\omega/2}.
\end{aligned}
\end{eqnarray}
\end{lemma}
\begin{proof}
From Lemma \ref{UIN-Holder-inequality} and $\frac{1}{2p\omega}+\frac{1}{2q\omega}+\frac{1}{2}=\frac{1}{\omega}$, we have
\begin{eqnarray}
\begin{aligned}\label{equ11}
\left\||\nabla f(\X_k)|^{\omega}\right\|_*^{1/\omega}=&\left\|\left|\bL_{k,\varepsilon}^{\frac{1}{4p}}\bL_{k,\varepsilon}^{-\frac{1}{4p}}\nabla f(\X_k)\bR_{k,\varepsilon}^{-\frac{1}{4q}}\bR_{k,\varepsilon}^{\frac{1}{4q}}\right|^{\omega}\right\|_*^{1/\omega}\\
\leq&\left\|\left|\bL_{k,\varepsilon}^{\frac{1}{4p}}\right|^{2p\omega}\right\|_*^{1/2p\omega}\left\|\left|\bL_{k,\varepsilon}^{-\frac{1}{4p}}\nabla f(\X_k)\bR_{k,\varepsilon}^{-\frac{1}{4q}}\right|^2\right\|_*^{1/2}\left\|\left|\bR_{k,\varepsilon}^{\frac{1}{4q}}\right|^{2q\omega}\right\|_*^{1/2q\omega}.
\end{aligned}
\end{eqnarray}
From Holder's inequality and $\frac{1}{2p}+\frac{1}{2q}+\frac{\omega}{2}=1$, we have
\begin{eqnarray}
\begin{aligned}\label{equ12}
&\sum_{k=1}^K\E\left[\left\||\nabla f(\X_k)|^{\omega}\right\|_*\right]\\
\leq&\sum_{k=1}^K\E\left[\left\|\left|\bL_{k,\varepsilon}^{\frac{1}{4p}}\right|^{2p\omega}\right\|_*^{1/2p}\left\|\left|\bL_{k,\varepsilon}^{-\frac{1}{4p}}\nabla f(\X_k)\bR_{k,\varepsilon}^{-\frac{1}{4q}}\right|^2\right\|_*^{\omega/2}\left\|\left|\bR_{k,\varepsilon}^{\frac{1}{4q}}\right|^{2q\omega}\right\|_*^{1/2q}\right]\\
\leq&\hspace*{-0.08cm}\left(\sum_{k=1}^K\hspace*{-0.05cm}\E\hspace*{-0.08cm}\left[\left\|\left|\bL_{k,\varepsilon}^{\frac{1}{4p}}\right|^{2p\omega}\right\|_*\right]\hspace*{-0.05cm}\right)^{\hspace*{-0.12cm}1/{2p}}\hspace*{-0.15cm}\left(\sum_{k=1}^K\hspace*{-0.05cm}\E\hspace*{-0.08cm}\left[\left\|\left|\bL_{k,\varepsilon}^{-\frac{1}{4p}}\nabla f(\X_k)\bR_{k,\varepsilon}^{-\frac{1}{4q}}\right|^2\right\|_*\right]\hspace*{-0.05cm}\right)^{\hspace*{-0.12cm}\omega/2}\hspace*{-0.15cm}\left(\sum_{k=1}^K\hspace*{-0.05cm}\E\hspace*{-0.08cm}\left[\left\|\left|\bR_{k,\varepsilon}^{\frac{1}{4q}}\right|^{2q\omega}\right\|_*\right]\hspace*{-0.05cm}\right)^{\hspace*{-0.12cm}1/{2q}}\hspace*{-0.1cm}.\hspace*{-0.8cm}
\end{aligned}
\end{eqnarray}
Denote $\A=\bL_{k,\varepsilon}^{-\frac{1}{4p}}\nabla f(\X_k)\bR_{k,\varepsilon}^{-\frac{1}{4q}}\in\mathbb R^{m\times n}$ with $r=\min\{m,n\}$. Recalling (\ref{matrix-power}), we have
\begin{eqnarray}
\begin{aligned}\notag
\left\||\A|^2\right\|_*=\sum_{i=1}^r \left(\sigma_i(\A)\right)^2=\tr(\A^T\A)=\|\A\|_F^2.
\end{aligned}
\end{eqnarray}
When $p,q<+\infty$, from (\ref{matrix-power}), $\sigma_i(\B^{\frac{1}{p}})=\left(\sigma_i(\B)\right)^{\frac{1}{p}}$ for any symmetric positive semidefinite matrix $\B$, the fact that $\bL_{k,\varepsilon}$ is symmetric positive definite, and Fact \ref{PSD-eig-singular}, we have
\begin{eqnarray}
\begin{aligned}\label{Schatten-p-bound}
\left\|\left|\bL_{k,\varepsilon}^{\frac{1}{4p}}\right|^{2p\omega}\right\|_*=\sum_{i=1}^m \left(\sigma_i\left(\bL_{k,\varepsilon}^{\frac{1}{4p}}\right)\right)^{2p\omega}=\tr\left(\bL_{k,\varepsilon}^{\omega/2}\right).
\end{aligned}
\end{eqnarray}
The derivation for $\bR_{k,\varepsilon}$ follows a similar approach. Finally, using $a^{\frac{1}{2p}}b^{\frac{1}{2q}}\leq(a+b)^{\frac{1}{2p}}(a+b)^{\frac{1}{2q}}=(a+b)^{1-\omega/2}$ for positive $a,b$, we have the desired bound. From Lemma \ref{finite-lemma} and the preceding argument, we know Holder's inequality is valid in (\ref{equ11}) and (\ref{equ12}). 

When $p=+\infty$ and $q=\frac{1}{2-\omega}$, we have $\bL_{k,\varepsilon}^{\frac{1}{4p}}=\I_m$ and $\bL_{k,\varepsilon}^{-\frac{1}{4p}}=\I_m$. Similar to the above analysis, we have
\begin{eqnarray}
\hspace*{-3.4cm}\begin{aligned}\notag
&\sum_{k=1}^K\E\left[\left\||\nabla f(\X_k)|^{\omega}\right\|_*\right]=\sum_{k=1}^K\E\left[\left\|\left|\nabla f(\X_k)\bR_{k,\varepsilon}^{-\frac{1}{4q}}\bR_{k,\varepsilon}^{\frac{1}{4q}}\right|^{\omega}\right\|_*\right]\\
\leq&\sum_{k=1}^K\E\left[\left\|\left|\nabla f(\X_k)\bR_{k,\varepsilon}^{-\frac{1}{4q}}\right|^2\right\|_*^{\omega/2}\left\|\left|\bR_{k,\varepsilon}^{\frac{1}{4q}}\right|^{2q\omega}\right\|_*^{\frac{1}{2q}}\right]
\end{aligned}
\end{eqnarray}
\begin{eqnarray}
\begin{aligned}\notag
\leq&\left(\sum_{k=1}^K\E\left[\left\|\left|\nabla f(\X_k)\bR_{k,\varepsilon}^{-\frac{1}{4q}}\right|^2\right\|_*\right]\right)^{\omega/2}\left(\sum_{k=1}^K\E\left[\left\|\left|\bR_{k,\varepsilon}^{\frac{1}{4q}}\right|^{2q\omega}\right\|_*\right]\right)^{1/{2q}}\\
=&\left(\sum_{k=1}^K\E\left[\left\|\nabla f(\X_k)\bR_{k,\varepsilon}^{-\frac{1}{4q}}\right\|_F^2\right]\right)^{\omega/2}\left(\sum_{k=1}^K\E\left[\tr\left(\bR_{k,\varepsilon}^{\omega/2}\right)\right]\right)^{1-\omega/2}\\
\leq&\left(\sum_{k=1}^K\E\left[\left\|\bL_{k,\varepsilon}^{-\frac{1}{4p}}\nabla f(\X_k)\bR_{k,\varepsilon}^{-\frac{1}{4q}}\right\|_F^2\right]\right)^{\omega/2}\left(\sum_{k=1}^K\E\left[\tr\left(\bL_{k,\varepsilon}^{\omega/2}\right)+\tr\left(\bR_{k,\varepsilon}^{\omega/2}\right)\right]\right)^{1-\omega/2},
\end{aligned}
\end{eqnarray}
where we use $\frac{1}{2q\omega}+\frac{1}{2}=\frac{1}{\omega}$ and the fact that $\bL_{k,\varepsilon}$ is symmetric positive definite. The case when $q=+\infty$ and $p=\frac{1}{2-\omega}$ is similar. Finally, the case when $p=+\infty$, $q=+\infty$, and $\omega=2$ holds trivially.
\end{proof} 

Next, we bound $\sum_{k=1}^K\E\left[\tr\left(\bL_{k,\varepsilon}^{\omega/2}\right)\right]$ and $\sum_{k=1}^K\E\left[\tr\left(\bR_{k,\varepsilon}^{\omega/2}\right)\right]$ in the following lemma.
\begin{lemma}\label{second-moment-bound}
Suppose that Assumptions 2-3 hold. Let $\beta< 1$ and $0<\omega\leq 2$. Then for Algorithm \ref{alg1}, we have
\begin{eqnarray}
\begin{aligned}\notag
\sum_{k=1}^K\E_{\F_k}\left[\tr\left(\bL_{k,\varepsilon}^{\omega/2}\right)\right]\leq K\tr\left(\Sigma_L^{\omega/2}\right)+Km\varepsilon^{\omega/2}+\frac{(1-\beta)^{\omega/2}}{1-\beta^{\omega/2}}\sum_{t=1}^K\E_{\F_{t-1}}\left[\left\||\nabla f(\X_t)|^{\omega}\right\|_*\right],
\end{aligned}
\end{eqnarray}
and
\begin{eqnarray}
\begin{aligned}\notag
\sum_{k=1}^K\E_{\F_k}\left[\tr\left(\bR_{k,\varepsilon}^{\omega/2}\right)\right]\leq K\tr\left(\Sigma_R^{\omega/2}\right)+Kn\varepsilon^{\omega/2}+\frac{(1-\beta)^{\omega/2}}{1-\beta^{\omega/2}}\sum_{t=1}^K\E_{\F_{t-1}}\left[\left\||\nabla f(\X_t)|^{\omega}\right\|_*\right].
\end{aligned}
\end{eqnarray}
\end{lemma}
\begin{proof}
From the recursion of $\bL_{k-t}$, we have
\begin{eqnarray}
\begin{aligned}\notag
&\E_{\F_{k-t}}\left[\tr\left(\left(\beta^t \bL_{k-t}+(1-\beta^t)\Sigma_L+\varepsilon\I_m\right)^{\omega/2}\right)\right]\\
=&\E_{\F_{k-t}}\left[\tr\left(\left(\beta^{t+1}\bL_{k-t-1}+\beta^t(1-\beta)\G_{k-t}\G_{k-t}^T+(1-\beta^t)\Sigma_L+\varepsilon\I_m\right)^{\omega/2}\right)\right]\\
=&\E_{\F_{k-t-1}}\hspace*{-0.1cm}\left[\E_{k-t}\left[\tr\left(\left(\beta^{t+1}\bL_{k-t-1}+\beta^t(1-\beta)\G_{k-t}\G_{k-t}^T+(1-\beta^t)\Sigma_L+\varepsilon\I_m\right)^{\omega/2}\right)\big|\F_{k-t-1}\right]\right]\\
\overset{a}\leq&\E_{\F_{k-t-1}}\hspace*{-0.1cm}\left[\tr\hspace*{-0.08cm}\left(\hspace*{-0.08cm}\left(\beta^{t+1}\bL_{k-t-1}\hspace*{-0.08cm}+\hspace*{-0.08cm}\beta^t(1\hspace*{-0.08cm}-\hspace*{-0.08cm}\beta)\E_{k-t}\left[\G_{k-t}\G_{k-t}^T\big|\F_{k-t-1}\right]+(1-\beta^t)\Sigma_L+\varepsilon\I_m\right)^{\omega/2}\right)\right]\\
\overset{b}\leq&\E_{\F_{k-t-1}}\hspace*{-0.1cm}\left[\tr\hspace*{-0.08cm}\left(\hspace*{-0.08cm}\left(\beta^{t+1}\bL_{k-t-1}\hspace*{-0.08cm}+\hspace*{-0.08cm}\beta^t\hspace*{-0.05cm}(\hspace*{-0.05cm}1\hspace*{-0.08cm}-\hspace*{-0.08cm}\beta)\hspace*{-0.05cm}\nabla\hspace*{-0.08cm} f\hspace*{-0.02cm}(\X_{k-t})\nabla\hspace*{-0.08cm}  f\hspace*{-0.02cm}(\X_{k-t})^T\hspace*{-0.1cm}+\hspace*{-0.08cm}\beta^t(\hspace*{-0.05cm}1\hspace*{-0.08cm}-\hspace*{-0.08cm}\beta\hspace*{-0.05cm})\Sigma_L\hspace*{-0.08cm}+\hspace*{-0.08cm}(1\hspace*{-0.08cm}-\hspace*{-0.08cm}\beta^t)\Sigma_L\hspace*{-0.08cm}+\hspace*{-0.08cm}\varepsilon\I_m\right)^{\hspace*{-0.05cm}\omega/2}\right)\right]\\
=&\E_{\F_{k-t-1}}\hspace*{-0.1cm}\left[\tr\hspace*{-0.08cm}\left(\hspace*{-0.08cm}\left(\beta^{t+1}\bL_{k-t-1}\hspace*{-0.08cm}+\hspace*{-0.08cm}\beta^t\hspace*{-0.05cm}(\hspace*{-0.05cm}1\hspace*{-0.08cm}-\hspace*{-0.08cm}\beta)\hspace*{-0.05cm}\nabla\hspace*{-0.08cm} f\hspace*{-0.02cm}(\X_{k-t})\nabla\hspace*{-0.08cm}  f\hspace*{-0.02cm}(\X_{k-t})^T\hspace*{-0.08cm}+\hspace*{-0.08cm}(1\hspace*{-0.08cm}-\hspace*{-0.08cm}\beta^{t+1})\Sigma_L+\varepsilon\I_m\right)^{\omega/2}\right)\right]\\
\overset{c}\leq&\E_{\F_{k-t-1}}\hspace*{-0.1cm}\left[\tr\hspace*{-0.08cm}\left(\hspace*{-0.08cm}\left(\beta^{t+1}\bL_{k-t-1}\hspace*{-0.08cm}+\hspace*{-0.08cm}(\hspace*{-0.05cm}1\hspace*{-0.1cm}-\hspace*{-0.1cm}\beta^{t+1}\hspace*{-0.05cm})\Sigma_L\hspace*{-0.1cm}+\hspace*{-0.08cm}\varepsilon\I_m\hspace*{-0.05cm}\right)^{\hspace*{-0.03cm}\omega\hspace*{-0.02cm}/\hspace*{-0.02cm}2}\hspace*{-0.05cm}\right)\hspace*{-0.1cm}+\hspace*{-0.12cm}\left(\hspace*{-0.04cm}\beta^t\hspace*{-0.03cm}(\hspace*{-0.05cm}1\hspace*{-0.09cm}-\hspace*{-0.09cm}\beta)\hspace*{-0.04cm}\right)^{\hspace*{-0.03cm}\omega\hspace*{-0.02cm}/\hspace*{-0.02cm}2}\hspace*{-0.08cm}\tr\hspace*{-0.1cm}\left(\hspace*{-0.1cm}\left(\nabla\hspace*{-0.08cm} f\hspace*{-0.03cm}(\X_{k-t})\nabla\hspace*{-0.08cm} f\hspace*{-0.03cm}(\X_{k-t})^T\right)^{\hspace*{-0.03cm}\omega\hspace*{-0.02cm}/\hspace*{-0.02cm}2}\hspace*{-0.05cm}\right)\hspace*{-0.08cm}\right]\\
\overset{d}=&\E_{\F_{k-t-1}}\hspace*{-0.1cm}\left[\tr\hspace*{-0.08cm}\left(\hspace*{-0.08cm}\left(\beta^{t+1}\bL_{k-t-1}\hspace*{-0.08cm}+\hspace*{-0.08cm}(1\hspace*{-0.08cm}-\hspace*{-0.08cm}\beta^{t+1})\Sigma_L\hspace*{-0.08cm}+\hspace*{-0.08cm}\varepsilon\I_m\right)^{\omega/2}\right)+\left(\beta^t(1-\beta)\right)^{\omega/2}\left\||\nabla f(\X_{k-t})|^{\omega}\right\|_*\right],
\end{aligned}
\end{eqnarray}
where we use the concavity of $\X^{\omega/2}$ with $\omega\leq 2$ presented in Lemma \ref{matrix-concavity} and $\tr(\X)\leq\tr(\Y)$ if $\X\preceq\Y$ presented in Lemma \ref{matrix-inequality-lemma} in $\overset{a}\leq$, (\ref{G-assump2}) and the monotonicity of $\X^{\omega/2}$ presented in Lemma \ref{matrix-monotonic} in $\overset{b}\leq$, and the property $\tr\left((\X+\Y)^{\omega/2}\right)\leq\tr(\X^{\omega/2})+\tr(\Y^{\omega/2})$ for symmetric positive semidefinite matrices with $\omega\in(0,2]$ presented in Lemma \ref{matrix-root-inequality} in $\overset{c}\leq$, and Lemma \ref{trace-nuclear-equation} in $\overset{d}=$. 

Applying the above inequality recursively for $t=0,1,2,\cdots,k-1$, we have
\begin{eqnarray}
\begin{aligned}\notag
&\E_{\F_k}\left[\tr\left(\bL_{k,\varepsilon}^{\omega/2}\right)\right]\\
\leq&\tr\left(\left(\beta^k\bL_0+(1-\beta^k)\Sigma_L+\varepsilon\I_m\right)^{\omega/2}\right)+(1-\beta)^{\omega/2}\sum_{t=0}^{k-1}\beta^{\frac{\omega t}{2}}\E_{\F_{k-t-1}}\left[\left\||\nabla f(\X_{k-t})|^{\omega}\right\|_*\right]\\
\overset{e}\leq&\tr\left(\Sigma_L^{\omega/2}+\varepsilon^{\omega/2}\I_m\right)+(1-\beta)^{\omega/2}\sum_{t=1}^k\beta^{\frac{\omega (k-t)}{2}}\E_{\F_{t-1}}\left[\left\||\nabla f(\X_t)|^{\omega}\right\|_*\right]\\
=&\tr\left(\Sigma_L^{\omega/2}\right)+m\varepsilon^{\omega/2}+(1-\beta)^{\omega/2}\sum_{t=1}^k\beta^{\frac{\omega (k-t)}{2}}\E_{\F_{t-1}}\left[\left\||\nabla f(\X_t)|^{\omega}\right\|_*\right],
\end{aligned}
\end{eqnarray}
where we use $\bL_0=\0$, monotonicity of $\X^{\omega/2}$, and $\tr\left((\X+\Y)^{\omega/2}\right)\leq\tr(\X^{\omega/2})+\tr(\Y^{\omega/2})$ in $\overset{e}\leq$. Summing over $k=1,2,\cdots,K$, we have
\begin{eqnarray}
\begin{aligned}\notag
\sum_{k=1}^K\hspace*{-0.07cm}\E_{\F_k}\hspace*{-0.1cm}\left[\tr\hspace*{-0.07cm}\left(\bL_{k,\varepsilon}^{\omega/2}\right)\right]\hspace*{-0.1cm}\leq& K\tr\hspace*{-0.07cm}\left(\hspace*{-0.07cm}\Sigma_L^{\omega/2}\right)\hspace*{-0.07cm}+\hspace*{-0.07cm}Km\varepsilon^{\omega/2}\hspace*{-0.07cm}+\hspace*{-0.07cm}(1-\beta)^{\omega/2}\sum_{k=1}^K\sum_{t=1}^k\beta^{\frac{\omega (k-t)}{2}}\E_{\F_{t-1}}\hspace*{-0.07cm}\left[\left\||\nabla f(\X_t)|^{\omega}\right\|_*\right]\\
=& K\tr\hspace*{-0.07cm}\left(\hspace*{-0.07cm}\Sigma_L^{\omega/2}\right)\hspace*{-0.07cm}+\hspace*{-0.07cm}Km\varepsilon^{\omega/2}\hspace*{-0.07cm}+\hspace*{-0.07cm}(1-\beta)^{\omega/2}\sum_{t=1}^K\sum_{k=t}^K\beta^{\frac{\omega (k-t)}{2}}\E_{\F_{t-1}}\hspace*{-0.07cm}\left[\left\||\nabla f(\X_t)|^{\omega}\right\|_*\right]\\
\leq& K\tr\hspace*{-0.07cm}\left(\hspace*{-0.07cm}\Sigma_L^{\omega/2}\right)\hspace*{-0.07cm}+\hspace*{-0.07cm}Km\varepsilon^{\omega/2}\hspace*{-0.07cm}+\hspace*{-0.07cm}\frac{(1-\beta)^{\omega/2}}{1-\beta^{\omega/2}}\sum_{t=1}^K\E_{\F_{t-1}}\hspace*{-0.07cm}\left[\left\||\nabla f(\X_t)|^{\omega}\right\|_*\right].
\end{aligned}
\end{eqnarray}
Similarly, we also have the bound for $\sum_{k=1}^K\E\left[\tr\left(\bR_{k,\varepsilon}^{\omega/2}\right)\right]$.
\end{proof} 
Combining Lemmas \ref{gradient-bound} and \ref{second-moment-bound}, we finally have
\begin{eqnarray}
\begin{aligned}\label{gradient-bound-final}
&\sum_{k=1}^K\E\left[\left\||\nabla f(\X_k)|^{\omega}\right\|_*\right]\\
\leq&\hspace*{-0.07cm}\left(\hspace*{-0.07cm}KC\hspace*{-0.07cm}+\hspace*{-0.07cm}\frac{2(1-\beta)^{\omega/2}}{1-\beta^{\omega/2}}\sum_{t=1}^K\E\left[\left\||\nabla f(\X_t)|^{\omega}\right\|_*\right]\right)^{1-\omega/2}\hspace*{-0.07cm}\left(\sum_{k=1}^K\E\hspace*{-0.07cm}\left[\left\|\bL_{k,\varepsilon}^{-\frac{1}{4p}}\nabla f(\X_k)\bR_{k,\varepsilon}^{-\frac{1}{4q}}\right\|_F^2\right]\right)^{\omega/2}\hspace*{-0.07cm},\hspace*{-0.07cm}
\end{aligned}
\end{eqnarray} 
where $C=\tr\left(\Sigma_L^{\omega/2}\right)+\tr\left(\Sigma_R^{\omega/2}\right)+(m+n)\varepsilon^{\omega/2}$. It therefore remains to bound the last term.

Beyond bounding the gradient, we also use Holder's inequality in the space of singular values to prove the following lemma, which is then employed to handle weight decay.
\begin{lemma}\label{bound-X}
Let $\frac{1}{p}+\frac{1}{q}=1$. Then for Algorithm \ref{alg1}, we have
\begin{eqnarray}
\begin{aligned}\notag
\left\|\bL_{k,\varepsilon}^{\frac{1}{4p}}\X_k\bR_{k,\varepsilon}^{\frac{1}{4q}}\right\|_F^2\leq\|\X_k\|_{op}^2 \left(\tr\left(\bL_{k,\varepsilon}^{1/2}\right)+\tr\left(\bR_{k,\varepsilon}^{1/2}\right)\right). 
\end{aligned}
\end{eqnarray}
\end{lemma}
\begin{proof}
We first consider the case of $p,q<+\infty$. Based on basic matrix analysis, we have
\begin{eqnarray}
\begin{aligned}\label{equ-m1}
\left\|\bL_{k,\varepsilon}^{\frac{1}{4p}}\X_k\bR_{k,\varepsilon}^{\frac{1}{4q}}\right\|_F^2=&\tr\left(\bL_{k,\varepsilon}^{\frac{1}{4p}}\X_k\bR_{k,\varepsilon}^{\frac{1}{2q}}\X_k^T\bL_{k,\varepsilon}^{\frac{1}{4p}}\right)\overset{a}=\left\|\left|\bL_{k,\varepsilon}^{\frac{1}{4p}}\X_k\bR_{k,\varepsilon}^{\frac{1}{2q}}\X_k^T\bL_{k,\varepsilon}^{\frac{1}{4p}}\right|\right\|_*\\
\overset{b}\leq&\left\|\left|\bL_{k,\varepsilon}^{\frac{1}{4p}}\right|^{2p}\right\|_*^{1/2p}\left\|\left|\X_k\bR_{k,\varepsilon}^{\frac{1}{2q}}\X_k^T\right|^q\right\|_*^{1/q}\left\|\left|\bL_{k,\varepsilon}^{\frac{1}{4p}}\right|^{2p}\right\|_*^{1/2p}\\
\overset{c}=&\left(\tr\left(\bL_{k,\varepsilon}^{1/2}\right)\right)^{\frac{1}{p}}\left\|\left|\X_k\bR_{k,\varepsilon}^{\frac{1}{2q}}\X_k^T\right|^q\right\|_*^{1/q},
\end{aligned}
\end{eqnarray}
where we use Fact \ref{PSD-eig-singular}, the fact that $\bL_{k,\varepsilon}^{\frac{1}{4p}}\X_k\bR_{k,\varepsilon}^{\frac{1}{2q}}\X_k^T\bL_{k,\varepsilon}^{\frac{1}{4p}}$ is symmetric positive semidefinite, and (\ref{matrix-power}) in $\overset{a}=$, $\frac{1}{p}+\frac{1}{q}=1$ and Lemma \ref{UIN-Holder-inequality} in $\overset{b}\leq$, and (\ref{Schatten-p-bound}) with $\omega=1$ in $\overset{c}=$. Denoting $r$ to be the rank of $\X_k\bR_{k,\varepsilon}^{\frac{1}{2q}}\X_k^T$, we have
\begin{eqnarray}
\begin{aligned}\notag
\left\|\left|\X_k\bR_{k,\varepsilon}^{\frac{1}{2q}}\X_k^T\right|^q\right\|_*^{1/q}\overset{d}=&\left(\sum_{i=1}^r\left(\sigma_i\left(\X_k\bR_{k,\varepsilon}^{\frac{1}{2q}}\X_k^T\right)\right)^{q}\right)^{\frac{1}{q}}\overset{e}\leq\left(\sum_{i=1}^n\left(\|\X_k\|_{op}^2\sigma_i\left(\bR_{k,\varepsilon}^{\frac{1}{2q}}\right)\right)^{q}\right)^{\frac{1}{q}}\\
\overset{f}=&\left(\|\X_k\|_{op}^{2q}\sum_{i=1}^n\sigma_i\left(\bR_{k,\varepsilon}^{1/2}\right)\right)^{\frac{1}{q}}\overset{g}=\|\X_k\|_{op}^{2}\left(\tr\left(\bR_{k,\varepsilon}^{1/2}\right)\right)^{\frac{1}{q}},
\end{aligned}
\end{eqnarray}
where we use (\ref{matrix-power}) in $\overset{d}=$, $r\leq\min\{m,n\}$ and the properties $\sigma_i(\A\B)\leq\sigma_i(\A)\|\B\|_{op}$ and $\sigma_i(\A\B)\leq\|\A\|_{op}\sigma_i(\B)$ of singular values in $\overset{e}\leq$, $\sigma_i(\B^{\frac{1}{q}})=\left(\sigma_i(\B)\right)^{\frac{1}{q}}$ for any symmetric positive semidefinite matrix $\B$ in $\overset{f}=$, and Fact \ref{PSD-eig-singular} in $\overset{g}=$. Plugging into (\ref{equ-m1}), we have
\begin{eqnarray}
\begin{aligned}\notag
\left\|\bL_{k,\varepsilon}^{\frac{1}{4p}}\X_k\bR_{k,\varepsilon}^{\frac{1}{4q}}\right\|_F^2\leq&\|\X_k\|_{op}^{2}\left(\tr\left(\bL_{k,\varepsilon}^{1/2}\right)\right)^{\frac{1}{p}}\left(\tr\left(\bR_{k,\varepsilon}^{1/2}\right)\right)^{\frac{1}{q}}\overset{h}\leq \|\X_k\|_{op}^2 \left(\tr\left(\bL_{k,\varepsilon}^{1/2}\right)+\tr\left(\bR_{k,\varepsilon}^{1/2}\right)\right), 
\end{aligned}
\end{eqnarray}
where we use $a^{\frac{1}{p}}b^{\frac{1}{q}}\leq (a+b)^{\frac{1}{p}+\frac{1}{q}}=a+b$ for positive $a,b$ in $\overset{h}\leq$. 

When $p=+\infty$ and $q=1$, we have $\bL_{k,\varepsilon}^{\frac{1}{4p}}=\I_m$ and $\bL_{k,\varepsilon}^{-\frac{1}{4p}}=\I_m$. Similar to above analysis, we have
\begin{eqnarray}
\begin{aligned}\notag
\left\|\bL_{k,\varepsilon}^{\frac{1}{4p}}\X_k\bR_{k,\varepsilon}^{\frac{1}{4q}}\right\|_F^2\hspace*{-0.04cm}=\hspace*{-0.04cm}\tr\hspace*{-0.04cm}\left(\hspace*{-0.04cm}\bR_{k,\varepsilon}^{1/4}\X_k^T\X_k\bR_{k,\varepsilon}^{1/4}\hspace*{-0.04cm}\right)\hspace*{-0.04cm}
\overset{i}\leq\hspace*{-0.04cm}\|\X_k\|_{op}^{2}\tr\hspace*{-0.04cm}\left(\hspace*{-0.04cm}\bR_{k,\varepsilon}^{1/2}\right)\hspace*{-0.04cm}\leq\hspace*{-0.04cm}\|\X_k\|_{op}^2\hspace*{-0.04cm} \left(\hspace*{-0.04cm}\tr\hspace*{-0.04cm}\left(\hspace*{-0.04cm}\bL_{k,\varepsilon}^{1/2}\right)\hspace*{-0.04cm}+\hspace*{-0.04cm}\tr\hspace*{-0.04cm}\left(\hspace*{-0.04cm}\bR_{k,\varepsilon}^{1/2}\right)\hspace*{-0.04cm}\right),
\end{aligned}
\end{eqnarray}
where we use Lemma \ref{matrix-inequality-lemma} in $\overset{i}\leq$. The case when $p=1$ and $q=+\infty$ is similar.

\end{proof}

\subsection{Bounding the Spectral Norm of the Update by Matrix Cauchy-Schwarz Inequality}\label{sec-two-sided-preconditioning2}

In the analysis of AdamW \citep{Li-2025-nips}, we can bound the update $\frac{|\m_{k,i}|}{\sqrt{\vv_{k,i}}}$ coordinatewise, where $\m$ and $\vv$ are the first and second moments, respectively. However, the matrix case is not as simple, especially with two-sided preconditioning. To address this challenge, we use the following matrix Cauchy-Schwarz inequality.
\begin{lemma}\citep[Corollary IX.5.3]{rajendra-book}\label{matrix-Schwarz}
For $\M\in\mathbb R^{m\times n}$ and symmetric positive definite matrices $\bL\in\mathbb R^{m\times m}$ and $\bR\in\mathbb R^{n\times n}$, $0\leq\alpha\leq1$, we have 
\begin{eqnarray}
\begin{aligned}\notag
\|\bL^{\alpha}\M\bR^{1-\alpha}\|_{op}\leq \|\bL\M\|_{op}^{\alpha}\|\M\bR\|_{op}^{1-\alpha}.
\end{aligned}
\end{eqnarray}
\end{lemma}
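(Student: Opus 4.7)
The plan is to establish the inequality by complex interpolation, specifically the Hadamard three-line theorem applied to a matrix-valued analytic function on the strip $S = \{z \in \mathbb{C} : 0 \leq \mathrm{Re}(z) \leq 1\}$. Since $\bL$ and $\bR$ are symmetric positive definite, their complex powers are well defined via the spectral calculus: writing $\bL = \U\bLambda\U^T$ with $\bLambda = \diag(\lambda_1,\ldots,\lambda_m)$, $\lambda_i>0$, one sets $\bL^z := \U\,\diag(\lambda_1^z,\ldots,\lambda_m^z)\,\U^T$ where $\lambda_i^z := e^{z\log\lambda_i}$, and analogously for $\bR$. I would then define $F(z) := \bL^z\,\M\,\bR^{1-z}$ for $z \in S$. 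Each entry of $F$ is an entire function of $z$, and on the closed strip $F$ is bounded in operator norm, so the usual analytic-interpolation machinery applies.

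The key observation is that $\bL^{iy}$ and $\bR^{iy}$ are \emph{unitary} for every $y \in \mathbb{R}$, because their eigenvalues $e^{iy\log\lambda_i}$ all have unit modulus. Consequently, on the left boundary $\mathrm{Re}(z)=0$, writing $z=iy$ and factoring $\bR^{1-iy} = \bR\cdot\bR^{-iy}$ gives $F(iy) = \bL^{iy}(\M\bR)\bR^{-iy}$, so $\|F(iy)\|_{op} = \|\M\bR\|_{op}$. On the right boundary $\mathrm{Re}(z)=1$, writing $z = 1+iy$ and factoring $\bL^{1+iy} = \bL^{iy}\cdot\bL$ gives $F(1+iy) = \bL^{iy}(\bL\M)\bR^{-iy}$, so $\|F(1+iy)\|_{op} = \|\bL\M\|_{op}$. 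Thus $\|F(z)\|_{op}$ is constant along each boundary line, equal to $\|\M\bR\|_{op}$ and $\|\bL\M\|_{op}$ respectively, which are precisely the two quantities appearing on the right-hand side of the claim.

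To invoke the scalar three-line theorem, I would fix unit vectors $u \in \mathbb{C}^m$ and $v \in \mathbb{C}^n$ and consider $\phi_{u,v}(z) := u^*F(z)v$. This is an entire function of $z$, bounded on $S$, with $|\phi_{u,v}(iy)| \leq \|\M\bR\|_{op}$ and $|\phi_{u,v}(1+iy)| \leq \|\bL\M\|_{op}$ on the two boundary lines. Hadamard's three-line theorem then yields $|\phi_{u,v}(\alpha)| \leq \|\M\bR\|_{op}^{1-\alpha}\|\bL\M\|_{op}^{\alpha}$ for every $\alpha \in [0,1]$. Taking the supremum over unit vectors $u,v$ produces $\|F(\alpha)\|_{op} \leq \|\M\bR\|_{op}^{1-\alpha}\|\bL\M\|_{op}^{\alpha}$, which is exactly the claim since $F(\alpha) = \bL^\alpha\M\bR^{1-\alpha}$.

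The main technical obstacle is verifying the hypotheses of the three-line theorem, namely that $\phi_{u,v}$ is analytic in the open strip and continuous and uniformly bounded on its closure. Analyticity is immediate from the spectral representation, since each entry of $F(z)$ is a finite combination of exponentials $e^{z\log\lambda_i}$ and $e^{(1-z)\log r_j}$ together with matrix entries of $\M$. For uniform boundedness, one observes that $\|\bL^z\|_{op} = \max_i\lambda_i^{\mathrm{Re}(z)} \leq \max(1,\|\bL\|_{op})$ whenever $\mathrm{Re}(z)\in[0,1]$, and an analogous bound holds for $\bR^{1-z}$, so $|\phi_{u,v}(z)| \leq \|\bL^z\|_{op}\|\M\|_{op}\|\bR^{1-z}\|_{op}$ is controlled on all of $S$. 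With this bookkeeping in place the rest of the argument is algebraic, and the statement extends to non-square $\M$ directly since the manipulation only involves compatible $m\times n$ matrix products.
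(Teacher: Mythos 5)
Your proof is correct. The paper does not prove this lemma; it cites it directly from Bhatia's \emph{Matrix Analysis} (Corollary IX.5.3), and the argument given there is exactly the one you reconstruct: define $F(z)=\bL^z\M\bR^{1-z}$ on the closed strip $\{0\le\mathrm{Re}(z)\le 1\}$, observe that $\bL^{iy}$ and $\bR^{iy}$ are unitary so that $\|F(iy)\|_{op}=\|\M\bR\|_{op}$ and $\|F(1+iy)\|_{op}=\|\bL\M\|_{op}$, reduce to scalar functions $u^*F(z)v$, and invoke the Hadamard three-line theorem. Your bookkeeping of analyticity and uniform boundedness on the strip is the right level of care, so the proposal faithfully reproduces the standard complex-interpolation proof that the paper implicitly relies on.
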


Based on the above lemma, we can bound the update in Algorithm \ref{alg1} measured by spectral norm. Note that Lemma \ref{bound-update-lemma} still holds even when $\bL_{k,\varepsilon}$ and $\bR_{k,\varepsilon}$ are ill-conditioned.
\begin{lemma}\label{bound-update-lemma}
Let $\theta\leq\beta\leq\sqrt{\theta}<1$ and $\frac{1}{p}+\frac{1}{q}=1$. Then for Algorithm \ref{alg1}, we have
\begin{eqnarray}
\begin{aligned}\notag
\left\|\bL_{k,\varepsilon}^{-\frac{1}{2p}}\M_k\bR_{k,\varepsilon}^{-\frac{1}{2q}}\right\|_{op}\leq 2.
\end{aligned}
\end{eqnarray}
\end{lemma}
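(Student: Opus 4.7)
\textbf{Proof plan for Lemma \ref{bound-update-lemma}.} The plan is to reduce the two-sided quantity to two one-sided quantities via Lemma \ref{matrix-Schwarz}, then bound each one-sided quantity by a semidefinite comparison between $\M_k\M_k^T$ (resp.\ $\M_k^T\M_k$) and $\bL_{k,\varepsilon}$ (resp.\ $\bR_{k,\varepsilon}$).

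First, I would apply Lemma \ref{matrix-Schwarz} with $\bL \leftarrow \bL_{k,\varepsilon}^{-1/2}$, $\bR \leftarrow \bR_{k,\varepsilon}^{-1/2}$, $\M \leftarrow \M_k$, and $\alpha = 1/p$ (so $1-\alpha = 1/q$). This gives
\begin{equation*}
\left\|\bL_{k,\varepsilon}^{-\frac{1}{2p}}\M_k\bR_{k,\varepsilon}^{-\frac{1}{2q}}\right\|_{op} \leq \left\|\bL_{k,\varepsilon}^{-1/2}\M_k\right\|_{op}^{1/p}\left\|\M_k\bR_{k,\varepsilon}^{-1/2}\right\|_{op}^{1/q},
\end{equation*}
so it suffices to show each of the two one-sided operator norms is at most $2$ (in fact $\sqrt{2}$ will suffice by the Holder-type split $\tfrac{1}{p}+\tfrac{1}{q}=1$).

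The key step, and the one I expect to do the real work, is a semidefinite comparison $\M_k\M_k^T \preceq c\,\bL_k$ for $c=\frac{1-\theta}{1-\beta}$. Unfolding the EMAs, $\M_k=(1-\theta)\sum_{t=1}^k\theta^{k-t}\G_t$ and $\bL_k=(1-\beta)\sum_{t=1}^k\beta^{k-t}\G_t\G_t^T$. For any vector $\u\in\bbR^m$, expanding $\u^T\M_k\M_k^T\u=(1-\theta)^2\|\sum_{t}\theta^{k-t}\G_t^T\u\|^2$ and applying Cauchy--Schwarz to the inner sum with weights $\theta^{k-t}=\sqrt{\theta^{k-t}}\cdot\sqrt{\theta^{k-t}}$ gives
\begin{equation*}
\u^T\M_k\M_k^T\u \leq (1-\theta)^2\Big(\sum_{t=1}^k\theta^{k-t}\Big)\sum_{t=1}^k\theta^{k-t}\u^T\G_t\G_t^T\u \leq (1-\theta)\,\u^T\Big(\sum_{t=1}^k\theta^{k-t}\G_t\G_t^T\Big)\u.
\end{equation*}
Since $\theta\le\beta<1$ and $k-t\ge 0$, we have $\theta^{k-t}\le\beta^{k-t}$ pointwise, so $\sum_t\theta^{k-t}\G_t\G_t^T\preceq\sum_t\beta^{k-t}\G_t\G_t^T=\tfrac{1}{1-\beta}\bL_k$. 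Combining, $\M_k\M_k^T \preceq \tfrac{1-\theta}{1-\beta}\bL_k \preceq \tfrac{1-\theta}{1-\beta}\bL_{k,\varepsilon}$, and the hypothesis $\beta\le\sqrt{\theta}$ yields $\tfrac{1-\theta}{1-\beta}\leq\tfrac{(1-\sqrt{\theta})(1+\sqrt{\theta})}{1-\sqrt{\theta}}=1+\sqrt{\theta}\le 2$.

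Finally, conjugating by $\bL_{k,\varepsilon}^{-1/2}$ turns this into $\bL_{k,\varepsilon}^{-1/2}\M_k\M_k^T\bL_{k,\varepsilon}^{-1/2}\preceq 2\I_m$, i.e.\ $\|\bL_{k,\varepsilon}^{-1/2}\M_k\|_{op}\le\sqrt{2}$. The symmetric argument, expanding $\M_k^T\M_k$ and using $\G_t^T\G_t$ in place of $\G_t\G_t^T$, gives $\|\M_k\bR_{k,\varepsilon}^{-1/2}\|_{op}\le\sqrt{2}$. Plugging both into the Lemma \ref{matrix-Schwarz} bound produces $(\sqrt{2})^{1/p}(\sqrt{2})^{1/q}=\sqrt{2}\le 2$, proving the lemma. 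The main obstacle is the semidefinite comparison $\M_k\M_k^T\preceq c\,\bL_k$: this is where the compatibility condition $\theta\le\beta\le\sqrt{\theta}$ is used, and it is essential that the Cauchy--Schwarz step produces squared gradient outer products $\G_t\G_t^T$ that match the form of $\bL_k$ rather than cross terms $\G_s\G_t^T$ with $s\neq t$.
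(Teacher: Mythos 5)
Your proof is correct, and the overall structure matches the paper's: reduce the two-sided quantity to one-sided quantities via Lemma~\ref{matrix-Schwarz}, then establish a semidefinite comparison $\M_k\M_k^T\preceq c\,\bL_{k,\varepsilon}$ (and its transpose) via a Cauchy--Schwarz argument on the unrolled EMAs. The one genuine difference is how you split the inner sum. The paper writes $\theta^{k-t}=\sqrt{\beta^{k-t}}\cdot\sqrt{(\theta^2/\beta)^{k-t}}$ and applies weighted Cauchy--Schwarz so that the $\bL_k$ weights $\beta^{k-t}$ appear directly, then bounds the companion geometric series $\sum_t(\theta^2/\beta)^{k-t}\le\frac{1}{1-\theta^2/\beta}\le\frac{1}{1-\beta}$ using $\theta\le\beta$; this yields $c=\frac{(1-\theta)^2}{(1-\beta)^2}\le 4$. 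You instead split $\theta^{k-t}=\sqrt{\theta^{k-t}}\cdot\sqrt{\theta^{k-t}}$, bound $\sum_t\theta^{k-t}\le\frac{1}{1-\theta}$, and only afterwards relax coefficient-wise $\theta^{k-t}\le\beta^{k-t}$ (valid because each $\u^T\G_t\G_t^T\u\ge 0$); this yields $c=\frac{1-\theta}{1-\beta}\le 1+\sqrt{\theta}\le 2$. Your split is slightly more elementary and produces a tighter constant ($\sqrt{2}$ rather than $2$ per side, hence $\sqrt{2}$ overall), though both establish the claimed bound of $2$ and both rely on exactly the same two hypotheses $\theta\le\beta$ and $\beta\le\sqrt{\theta}$ at the corresponding steps.
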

\begin{proof}
From Lemma \ref{matrix-Schwarz} and $\frac{1}{p}+\frac{1}{q}=1$, we have
\begin{eqnarray}
\begin{aligned}\notag
\left\|\bL_{k,\varepsilon}^{-\frac{1}{2p}}\M_k\bR_{k,\varepsilon}^{-\frac{1}{2q}}\right\|_{op}\leq \left\|\bL_{k,\varepsilon}^{-\frac{1}{2}}\M_k\right\|_{op}^{\frac{1}{p}}\left\|\M_k\bR_{k,\varepsilon}^{-\frac{1}{2}}\right\|_{op}^{\frac{1}{q}}.
\end{aligned}
\end{eqnarray}
So we only need to prove
\begin{eqnarray}
\begin{aligned}\notag \left\|\bL_{k,\varepsilon}^{-\frac{1}{2}}\M_k\right\|_{op}\leq 2\quad\mbox{and}\quad \left\|\M_k\bR_{k,\varepsilon}^{-\frac{1}{2}}\right\|_{op}\leq 2.
\end{aligned}
\end{eqnarray}
The two inequalities are analogous, so we prove only the first. Since $\left\|\bL_{k,\varepsilon}^{-\frac{1}{2}}\M_k\right\|_{op}^2=\left\|\bL_{k,\varepsilon}^{-\frac{1}{2}}\M_k\M_k^T\bL_{k,\varepsilon}^{-\frac{1}{2}}\right\|_{op}$, we only need to prove
\begin{eqnarray}
\begin{aligned}\notag
\bL_{k,\varepsilon}^{-\frac{1}{2}}\M_k\M_k^T\bL_{k,\varepsilon}^{-\frac{1}{2}} \preceq 4\I_m.
\end{aligned}
\end{eqnarray}
Since $\bL_{k,\varepsilon}$ is invertible, the above inequality is equivalent to
\begin{eqnarray}
\begin{aligned}\notag
\M_k\M_k^T \preceq 4\bL_{k,\varepsilon} \quad\mbox{and}\quad
\y^T\M_k\M_k^T\y \leq 4\y^T\bL_{k,\varepsilon}\y,\hspace*{0.03cm}\forall \y\in\mathbb R^m.
\end{aligned}
\end{eqnarray}
From the recursions of $\M_k$ and $\bL_k$, we have
\begin{eqnarray}
\begin{aligned}\notag
&\y^T\M_k= (1-\theta)\sum_{t=1}^k\theta^{k-t}\y^T\G_t,\\
&\y^T\bL_k\y= (1-\beta)\sum_{t=1}^k\beta^{k-t}\y^T\G_t\G_t^T\y = (1-\beta)\sum_{t=1}^k\beta^{k-t}\left\|\y^T\G_t\right\|^2.
\end{aligned}
\end{eqnarray}
From Holder's inequality, we have
\begin{eqnarray}
\begin{aligned}\notag
\y^T\M_k\M_k^T\y =& (1-\theta)^2\left\|\sum_{t=1}^k\theta^{k-t}\y^T\G_t \right\|^2 \leq (1-\theta)^2\left(\sum_{t=1}^k\theta^{k-t}\left\|\y^T\G_t \right\|\right)^2 \\
\leq& (1-\theta)^2\left(\sum_{t=1}^k\beta^{k-t}\left\|\y^T\G_t\right\|^2 \right)\left(\sum_{t=1}^k\left(\frac{\theta^2}{\beta}\right)^{k-t}\right)\\
=&\frac{(1-\theta)^2}{1-\beta}\y^T\bL_k\y\left(\sum_{t=1}^k\left(\frac{\theta^2}{\beta}\right)^{k-t}\right)\\
\leq& \frac{(1-\theta)^2}{1-\beta}\frac{1}{1-\frac{\theta^2}{\beta}}\y^T\bL_k\y\overset{a}\leq \frac{(1-\theta)^2}{(1-\beta)^2}\y^T\bL_k\y \\
\overset{b}\leq& \frac{(1-\sqrt{\theta})^2(1+\sqrt{\theta})^2}{(1-\sqrt{\theta})^2}\y^T\bL_k\y\leq 4\y^T\bL_k\y\leq 4\y^T\bL_{k,\varepsilon}\y,
\end{aligned}
\end{eqnarray}
where we use $\theta\leq\beta$ in $\overset{a}\leq$ and $\beta\leq\sqrt{\theta}$ in $\overset{b}\leq$.
\end{proof}
\begin{remark}
The requirement $\beta\leq\sqrt{\theta}$ excludes the common setting $(\theta,\beta)=(0.9,0.999)$. In fact, we can relax this condition to $\beta\leq\theta^{1/128}$, noting that $0.9^{1/128}\geq 0.999$. In this case, the step marked $\overset{b}\leq$ should be replaced by $\frac{(1-\theta)^2}{(1-\beta)^2}\leq\frac{(1-\theta^{1/2^7})^2\Pi_{r=1}^7(1+\theta^{1/2^r})^2}{(1-\theta^{1/2^7})^2}\leq 4^7$, at the cost of a larger constant term in the convergence rate.
\end{remark}

By leveraging Lemma \ref{bound-update-lemma} and the distinct properties of decoupled weight decay, we ultimately establish the following lemma, which extends \citep[Lemma 3]{Li-2025-nips} but replaces the $\ell_{\infty}$ norm of vectors by the spectral norm of matrices.
\begin{lemma}\label{spectral-norm-bound}
Let $\eta\lambda\leq \frac{\sqrt{\nu}}{2K^{5/4}}$, $\|\X_1\|_{op}\leq \frac{\sqrt{\nu}}{K^{1/4}\lambda}$, $\frac{\sqrt{\nu}}{K^{1/4}}\leq 1$, $\theta\leq\beta\leq\sqrt{\theta}<1$, and $\frac{1}{p}+\frac{1}{q}=1$ for some constant $\nu$. Then for Algorithm \ref{alg1}, we have
\begin{eqnarray}
\begin{aligned}\label{x-bound}
\lambda\|\X_k\|_{op}\leq \frac{3\sqrt{\nu}}{K^{1/4}},\quad \forall k=1,2,\cdots,K.
\end{aligned}
\end{eqnarray}
\end{lemma} 
\begin{proof}
When $\lambda=0$, (\ref{x-bound}) trivially holds, so we only consider $\lambda\neq 0$. From the update of $\X_{k+1}$, we have
\begin{eqnarray}
\begin{aligned}\notag
\|\X_{k+1}\|_{op}-\frac{2}{\lambda}=&\left\|(1-\lambda\eta)\X_k - \eta\bL_{k,\varepsilon}^{-\frac{1}{2p}}\M_k\bR_{k,\varepsilon}^{-\frac{1}{2q}}\right\|_{op}-\frac{2}{\lambda}\\
\leq&(1-\lambda\eta)\left\|\X_k\right\|_{op} + \eta\left\|\bL_{k,\varepsilon}^{-\frac{1}{2p}}\M_k\bR_{k,\varepsilon}^{-\frac{1}{2q}}\right\|_{op}-\frac{2}{\lambda}\\
\overset{a}\leq&(1-\lambda\eta)\left\|\X_k\right\|_{op} + 2\eta - \frac{2}{\lambda}=(1-\lambda\eta)\left(\left\|\X_k\right\|_{op}-\frac{2}{\lambda}\right)\\
\leq&(1-\lambda\eta)^k\left(\left\|\X_1\right\|_{op}-\frac{2}{\lambda}\right)\leq-\frac{1}{\lambda}(1-\lambda\eta)^k\left(2-\frac{\sqrt{\nu}}{K^{1/4}}\right),
\end{aligned}
\end{eqnarray}
where we use Lemma \ref{bound-update-lemma} in $\overset{a}\leq$. Since $\ln x\leq x-1$ and $e^x\geq x+1$ for any $x>0$ and $\eta\lambda\leq\frac{\sqrt{\nu}}{2K^{5/4}}\leq\frac{1}{2}$, we have for any $k\leq K$ that
\begin{eqnarray}
\begin{aligned}\notag
k\ln(1-\eta\lambda)=-k\ln\frac{1}{1-\eta\lambda}\geq -K\left(\frac{1}{1-\eta\lambda}-1\right)=-\frac{K\eta\lambda}{1-\eta\lambda}\geq -\frac{\sqrt{\nu}}{K^{1/4}},
\end{aligned}
\end{eqnarray}
\begin{eqnarray}
\begin{aligned}\notag
(1-\eta\lambda)^k\geq e^{-\frac{\sqrt{\nu}}{K^{1/4}}}\geq 1-\frac{\sqrt{\nu}}{K^{1/4}},
\end{aligned}
\end{eqnarray}
and
\begin{eqnarray}
\begin{aligned}\notag
\|\X_{k+1}\|_{op}-\frac{2}{\lambda}\leq& -\frac{1}{\lambda}\left(1-\frac{\sqrt{\nu}}{K^{1/4}}\right)\left(2-\frac{\sqrt{\nu}}{K^{1/4}}\right)\leq-\frac{2}{\lambda}+\frac{3}{\lambda}\frac{\sqrt{\nu}}{K^{1/4}}.
\end{aligned}
\end{eqnarray}
\end{proof}

\subsection{Proof of Theorem \ref{main-theorem}}\label{main-theorem-proof-sec}
With the supporting lemmas in Sections \ref{sec-two-sided-preconditioning1}, \ref{sec-two-sided-preconditioning2}, and \ref{sec:supprt-lemmas}, we can prove Theorem \ref{main-theorem}.
\begin{proof}
As the gradient is $L$-Lipschitz, we have
\begin{eqnarray}
\begin{aligned}\label{equ1}
&\E_k\left[f(\X_{k+1})\big|\F_{k-1}\right]-f(\X_k)\\
\leq& \E_k\left[\<\nabla f(\X_k),\X_{k+1}-\X_k\>+\frac{L}{2}\|\X_{k+1}-\X_k\|_F^2 \big|\F_{k-1}\right]\\
=& \E_k\left[-\eta\<\nabla f(\X_k),\lambda\X_k+\bL_{k,\varepsilon}^{-\frac{1}{2p}}\M_k\bR_{k,\varepsilon}^{-\frac{1}{2q}}\>+\frac{L\eta^2}{2}\left\|\lambda\X_k+\bL_{k,\varepsilon}^{-\frac{1}{2p}}\M_k\bR_{k,\varepsilon}^{-\frac{1}{2q}}\right\|_F^2 \big|\F_{k-1}\right]\\
=& \E_k\left[-\eta\<\bL_{k,\varepsilon}^{-\frac{1}{4p}}\nabla f(\X_k)\bR_{k,\varepsilon}^{-\frac{1}{4q}},\lambda\bL_{k,\varepsilon}^{\frac{1}{4p}}\X_k\bR_{k,\varepsilon}^{\frac{1}{4q}}+\bL_{k,\varepsilon}^{-\frac{1}{4p}}\M_k\bR_{k,\varepsilon}^{-\frac{1}{4q}}\>\right.\\
&\qquad\left.+\frac{L\eta^2}{2}\left\|\lambda\X_k+\bL_{k,\varepsilon}^{-\frac{1}{2p}}\M_k\bR_{k,\varepsilon}^{-\frac{1}{2q}}\right\|_F^2 \big|\F_{k-1}\right]\\
=& \E_k\left[-\frac{\eta}{2}\left\|\bL_{k,\varepsilon}^{-\frac{1}{4p}}\nabla f(\X_k)\bR_{k,\varepsilon}^{-\frac{1}{4q}}\right\|_F^2 - \frac{\eta}{2}\left\|\lambda\bL_{k,\varepsilon}^{\frac{1}{4p}}\X_k\bR_{k,\varepsilon}^{\frac{1}{4q}}+\bL_{k,\varepsilon}^{-\frac{1}{4p}}\M_k\bR_{k,\varepsilon}^{-\frac{1}{4q}}\right\|_F^2\right.\\
&\quad\left. + \frac{\eta}{2}\left\|\bL_{k,\varepsilon}^{-\frac{1}{4p}}\hspace*{-0.08cm}\left(\nabla f(\X_k)\hspace*{-0.08cm}-\hspace*{-0.08cm}\M_k\right)\hspace*{-0.08cm}\bR_{k,\varepsilon}^{-\frac{1}{4q}}\hspace*{-0.08cm}-\hspace*{-0.08cm}\lambda\bL_{k,\varepsilon}^{\frac{1}{4p}}\X_k\bR_{k,\varepsilon}^{\frac{1}{4q}}\right\|_F^2 \hspace*{-0.08cm}+\hspace*{-0.08cm}\frac{L\eta^2}{2}\hspace*{-0.08cm}\left\|\lambda\X_k\hspace*{-0.08cm}+\hspace*{-0.08cm}\bL_{k,\varepsilon}^{-\frac{1}{2p}}\M_k\bR_{k,\varepsilon}^{-\frac{1}{2q}}\right\|_F^2 \big|\F_{k-1}\right]\hspace*{-0.75cm}\\
\leq& \E_k\left[-\frac{\eta}{2}\left\|\bL_{k,\varepsilon}^{-\frac{1}{4p}}\nabla f(\X_k)\bR_{k,\varepsilon}^{-\frac{1}{4q}}\right\|_F^2 \hspace*{-0.08cm}-\hspace*{-0.08cm} \frac{\eta}{2}\left\|\lambda\bL_{k,\varepsilon}^{\frac{1}{4p}}\X_k\bR_{k,\varepsilon}^{\frac{1}{4q}}\hspace*{-0.08cm}+\hspace*{-0.08cm}\bL_{k,\varepsilon}^{-\frac{1}{4p}}\M_k\bR_{k,\varepsilon}^{-\frac{1}{4q}}\right\|_F^2\hspace*{-0.08cm}+\hspace*{-0.08cm}\eta\lambda^2\hspace*{-0.05cm}\underbrace{\left\|\bL_{k,\varepsilon}^{\frac{1}{4p}}\X_k\bR_{k,\varepsilon}^{\frac{1}{4q}}\right\|_F^2}_{\text{\rm term (a)}}\right.\hspace*{-1cm}\\
&\qquad\left. + \eta\underbrace{\left\|\bL_{k,\varepsilon}^{-\frac{1}{4p}}\left(\nabla f(\X_k)-\M_k\right)\bR_{k,\varepsilon}^{-\frac{1}{4q}}\right\|_F^2}_{\text{\rm term (b)}} +\frac{L\eta^2}{2}\underbrace{\left\|\lambda\X_k+\bL_{k,\varepsilon}^{-\frac{1}{2p}}\M_k\bR_{k,\varepsilon}^{-\frac{1}{2q}}\right\|_F^2}_{\text{\rm term (c)}} \big|\F_{k-1}\right].
\end{aligned}
\end{eqnarray}
For term (a), from Lemma \ref{bound-X} and \ref{spectral-norm-bound}, we have
\begin{eqnarray}
\begin{aligned}\notag
\left\|\bL_{k,\varepsilon}^{\frac{1}{4p}}\X_k\bR_{k,\varepsilon}^{\frac{1}{4q}}\right\|_F^2\leq&\|\X_k\|_{op}^2 \left(\tr\left(\bL_{k,\varepsilon}^{1/2}\right)+\tr\left(\bR_{k,\varepsilon}^{1/2}\right)\right)\\
\leq&\frac{9\nu}{\lambda^2K^{1/2}} \left(\tr\left(\bL_{k,\varepsilon}^{1/2}\right)+\tr\left(\bR_{k,\varepsilon}^{1/2}\right)\right).
\end{aligned}
\end{eqnarray}
For terms (b) and (c), similar to the induction in (\ref{equ14}), we have
\begin{eqnarray}
\begin{aligned}\notag
\left\|\bL_{k,\varepsilon}^{-\frac{1}{4p}}\left(\nabla f(\X_k)-\M_k\right)\bR_{k,\varepsilon}^{-\frac{1}{4q}}\right\|_F^2=&\tr\left(\bR_{k,\varepsilon}^{-\frac{1}{4q}}\left(\nabla f(\X_k)-\M_k\right)^T\bL_{k,\varepsilon}^{-\frac{1}{2p}}\left(\nabla f(\X_k)-\M_k\right)\bR_{k,\varepsilon}^{-\frac{1}{4q}}\right)\\
\leq&\frac{1}{\hat\varepsilon^{\frac{1}{2p}}}\tr\left(\bR_{k,\varepsilon}^{-\frac{1}{4q}}\left(\nabla f(\X_k)-\M_k\right)^T\left(\nabla f(\X_k)-\M_k\right)\bR_{k,\varepsilon}^{-\frac{1}{4q}}\right)\\
=&\frac{1}{\hat\varepsilon^{\frac{1}{2p}}}\tr\left(\left(\nabla f(\X_k)-\M_k\right)\bR_{k,\varepsilon}^{-\frac{1}{2q}}\left(\nabla f(\X_k)-\M_k\right)^T\right)\\
\leq&\frac{1}{\hat\varepsilon^{\frac{1}{2p}+\frac{1}{2q}}}\tr\left(\left(\nabla f(\X_k)-\M_k\right)\left(\nabla f(\X_k)-\M_k\right)^T\right)\\
=&\frac{1}{\sqrt{\hat\varepsilon}}\left\|\nabla f(\X_k)-\M_k\right\|_F^2,
\end{aligned}
\end{eqnarray}
and
\begin{eqnarray}
\begin{aligned}\label{equ2}
\left\|\lambda\X_k+\bL_{k,\varepsilon}^{-\frac{1}{2p}}\M_k\bR_{k,\varepsilon}^{-\frac{1}{2q}}\right\|_F^2=&\left\|\bL_{k,\varepsilon}^{-\frac{1}{4p}}\underbrace{\left(\lambda\bL_{k,\varepsilon}^{\frac{1}{4p}}\X_k\bR_{k,\varepsilon}^{\frac{1}{4q}}+\bL_{k,\varepsilon}^{-\frac{1}{4p}}\M_k\bR_{k,\varepsilon}^{-\frac{1}{4q}}\right)}_{\A}\bR_{k,\varepsilon}^{-\frac{1}{4q}}\right\|_F^2\\
=&\tr\left(\bR_{k,\varepsilon}^{-\frac{1}{4q}}\A^T\bL_{k,\varepsilon}^{-\frac{1}{2p}}\A\bR_{k,\varepsilon}^{-\frac{1}{4q}}\right)\leq\frac{1}{\hat\varepsilon^{\frac{1}{2p}}}\tr\left(\bR_{k,\varepsilon}^{-\frac{1}{4q}}\A^T\A\bR_{k,\varepsilon}^{-\frac{1}{4q}}\right)\\
=&\frac{1}{\hat\varepsilon^{\frac{1}{2p}}}\tr\left(\A\bR_{k,\varepsilon}^{-\frac{1}{2q}}\A^T\right)\leq\frac{1}{\hat\varepsilon^{\frac{1}{2p}+\frac{1}{2q}}}\tr\left(\A\A^T\right)=\frac{1}{\sqrt{\hat\varepsilon}}\|\A\|_F^2\\
=&\frac{1}{\sqrt{\hat\varepsilon}}\left\|\lambda\bL_{k,\varepsilon}^{\frac{1}{4p}}\X_k\bR_{k,\varepsilon}^{\frac{1}{4q}}+\bL_{k,\varepsilon}^{-\frac{1}{4p}}\M_k\bR_{k,\varepsilon}^{-\frac{1}{4q}}\right\|_F^2,
\end{aligned}
\end{eqnarray}
where we use $\frac{1}{p}+\frac{1}{q}=1$. Plugging into (\ref{equ1}) and letting $\eta\leq\frac{\sqrt{\hat\varepsilon}}{2L}$, we have
\begin{eqnarray}
\begin{aligned}\label{equ3}
&\E_k\left[f(\X_{k+1})\big|\F_{k-1}\right]-f(\X_k)\\
\leq& \E_k\left[-\frac{\eta}{2}\hspace*{-0.06cm}\left\|\bL_{k,\varepsilon}^{-\frac{1}{4p}}\nabla \hspace*{-0.03cm}f(\X_k)\bR_{k,\varepsilon}^{-\frac{1}{4q}}\right\|_F^2 \hspace*{-0.08cm}-\hspace*{-0.08cm} \frac{\eta}{2}\hspace*{-0.06cm}\left\|\lambda\bL_{k,\varepsilon}^{\frac{1}{4p}}\X_k\bR_{k,\varepsilon}^{\frac{1}{4q}}\hspace*{-0.08cm}+\hspace*{-0.08cm}\bL_{k,\varepsilon}^{-\frac{1}{4p}}\M_k\bR_{k,\varepsilon}^{-\frac{1}{4q}}\right\|_F^2\hspace*{-0.08cm}+\hspace*{-0.08cm} \frac{\eta}{\sqrt{\hat\varepsilon}}\hspace*{-0.06cm}\left\|\nabla \hspace*{-0.03cm}f(\X_k)\hspace*{-0.08cm}-\hspace*{-0.08cm}\M_k\right\|_F^2\right.\hspace*{-0.75cm}\\
&\quad\left.+\frac{9\eta\nu}{K^{1/2}} \left(\tr\left(\bL_{k,\varepsilon}^{1/2}\right)+\tr\left(\bR_{k,\varepsilon}^{1/2}\right)\right) +\frac{L\eta^2}{2\sqrt{\hat\varepsilon}}\left\|\lambda\bL_{k,\varepsilon}^{\frac{1}{4p}}\X_k\bR_{k,\varepsilon}^{\frac{1}{4q}}+\bL_{k,\varepsilon}^{-\frac{1}{4p}}\M_k\bR_{k,\varepsilon}^{-\frac{1}{4q}}\right\|_F^2 \big|\F_{k-1}\right]\\
\leq& \E_k\left[-\frac{\eta}{2}\left\|\bL_{k,\varepsilon}^{-\frac{1}{4p}}\nabla f(\X_k)\bR_{k,\varepsilon}^{-\frac{1}{4q}}\right\|_F^2 - \frac{\eta}{4}\left\|\lambda\bL_{k,\varepsilon}^{\frac{1}{4p}}\X_k\bR_{k,\varepsilon}^{\frac{1}{4q}}+\bL_{k,\varepsilon}^{-\frac{1}{4p}}\M_k\bR_{k,\varepsilon}^{-\frac{1}{4q}}\right\|_F^2\right.\\
&\quad\left. + \frac{\eta}{\sqrt{\hat\varepsilon}}\left\|\nabla f(\X_k)-\M_k\right\|_F^2+ \frac{9\eta\nu}{K^{1/2}} \left(\tr\left(\bL_{k,\varepsilon}^{1/2}\right)+\tr\left(\bR_{k,\varepsilon}^{1/2}\right)\right) \big|\F_{k-1}\right].
\end{aligned}
\end{eqnarray}
Multiplying both sides of (\ref{GM-dif-equ}) with $\omega=1$ by $\frac{\eta}{\sqrt{\hat\varepsilon}(1-\theta)}$, adding it to (\ref{equ3}), and arranging the terms, we have
\begin{eqnarray}
\begin{aligned}\label{equ4}
&\E_k\hspace*{-0.1cm}\left[\hspace*{-0.05cm}f(\X_{k+1})\hspace*{-0.07cm}-\hspace*{-0.07cm}f^* \hspace*{-0.07cm}+\hspace*{-0.07cm} \frac{\eta}{4}\hspace*{-0.05cm}\left\|\lambda\bL_{k,\varepsilon}^{\frac{1}{4p}}\X_k\bR_{k,\varepsilon}^{\frac{1}{4q}}\hspace*{-0.07cm}+\hspace*{-0.07cm}\bL_{k,\varepsilon}^{-\frac{1}{4p}}\M_k\bR_{k,\varepsilon}^{-\frac{1}{4q}}\right\|_F^2 \hspace*{-0.07cm}+\hspace*{-0.07cm} \frac{\eta\theta}{\sqrt{\hat\varepsilon}(1\hspace*{-0.05cm}-\hspace*{-0.05cm}\theta)}\hspace*{-0.05cm}\left\|\nabla\hspace*{-0.05cm} f(\X_k)\hspace*{-0.07cm}-\hspace*{-0.07cm}\M_k\right\|_F^2 \hspace*{-0.05cm}\big|\F_{k-1}\right]\hspace*{-0.75cm}\\
\leq& f(\X_k)-f^* + \E_k\left[-\frac{\eta}{2}\left\|\bL_{k,\varepsilon}^{-\frac{1}{4p}}\nabla f(\X_k)\bR_{k,\varepsilon}^{-\frac{1}{4q}}\right\|_F^2 + \frac{9\eta\nu}{K^{1/2}} \left(\tr\left(\bL_{k,\varepsilon}^{1/2}\right)+\tr\left(\bR_{k,\varepsilon}^{1/2}\right)\right) \big|\F_{k-1}\right]\hspace*{-0.75cm}\\
&+\frac{ L^2\eta^3}{\hat\varepsilon(1-\theta)^2}\left\|\lambda\bL_{k-1,\varepsilon}^{\frac{1}{4p}}\X_{k-1}\bR_{k-1,\varepsilon}^{\frac{1}{4q}}+\bL_{k-1,\varepsilon}^{-\frac{1}{4p}}\M_{k-1}\bR_{k-1,\varepsilon}^{-\frac{1}{4q}}\right\|_F^2 \\
&+\frac{\eta\theta}{\sqrt{\hat\varepsilon}(1-\theta)}\left\|\nabla f(\X_{k-1})-\M_{k-1}\right\|_F^2+ \frac{\eta(1-\theta)}{\sqrt{\hat\varepsilon}}\sigma^2\\
\leq& f(\X_k)-f^*+ \E_k\left[-\frac{\eta}{2}\left\|\bL_{k,\varepsilon}^{-\frac{1}{4p}}\nabla f(\X_k)\bR_{k,\varepsilon}^{-\frac{1}{4q}}\right\|_F^2 + \frac{9\eta\nu}{K^{1/2}} \left(\tr\left(\bL_{k,\varepsilon}^{1/2}\right)+\tr\left(\bR_{k,\varepsilon}^{1/2}\right)\right) \big|\F_{k-1}\right]\hspace*{-0.75cm}\\
&+\hspace*{-0.08cm} \frac{\eta}{4}\hspace*{-0.08cm}\left\|\hspace*{-0.04cm}\lambda\bL_{k-1,\varepsilon}^{\frac{1}{4p}}\X_{k-1}\hspace*{-0.05cm}\bR_{k-1,\varepsilon}^{\frac{1}{4q}}\hspace*{-0.08cm}+\hspace*{-0.08cm}\bL_{k-1,\varepsilon}^{-\frac{1}{4p}}\M_{k-1}\hspace*{-0.05cm}\bR_{k-1,\varepsilon}^{-\frac{1}{4q}}\hspace*{-0.04cm}\right\|_F^2 \hspace*{-0.09cm}+\hspace*{-0.09cm}\frac{\eta\theta}{\sqrt{\hat\varepsilon}(\hspace*{-0.05cm}1\hspace*{-0.09cm}-\hspace*{-0.09cm}\theta\hspace*{-0.04cm})}\hspace*{-0.08cm}\left\|\hspace*{-0.04cm}\nabla \hspace*{-0.07cm}f\hspace*{-0.04cm}(\X_{k-1}\hspace*{-0.05cm})\hspace*{-0.08cm}-\hspace*{-0.08cm}\M_{k-1}\hspace*{-0.05cm}\right\|_F^2 \hspace*{-0.08cm}+\hspace*{-0.09cm} \frac{\eta(\hspace*{-0.05cm}1\hspace*{-0.08cm}-\hspace*{-0.08cm}\theta)\hspace*{-0.04cm}}{\sqrt{\hat\varepsilon}}\sigma^2\hspace*{-0.08cm},\hspace*{-0.75cm}
\end{aligned}
\end{eqnarray}
where we let $\eta^2\leq\frac{\hat\varepsilon(1-\theta)^2}{4L^2}$ in the last inequality. Taking expectation with respect to $\F_{k-1}$ and summing (\ref{equ3}) with $k=1$ and (\ref{equ4}) over $k = 2, \cdots , K$, we have
\begin{eqnarray}
\begin{aligned}\label{equ5}
&\E_{\F_K}\hspace*{-0.12cm}\left[\hspace*{-0.05cm}f(\X_{K+1})\hspace*{-0.07cm}-\hspace*{-0.07cm}f^* \hspace*{-0.07cm}+\hspace*{-0.07cm} \frac{\eta}{4}\hspace*{-0.07cm}\left\|\lambda\bL_{K,\varepsilon}^{\frac{1}{4p}}\X_K\bR_{K,\varepsilon}^{\frac{1}{4q}}\hspace*{-0.07cm}+\hspace*{-0.07cm}\bL_{K,\varepsilon}^{-\frac{1}{4p}}\M_K\bR_{K,\varepsilon}^{-\frac{1}{4q}}\right\|_F^2 \hspace*{-0.07cm}+\hspace*{-0.07cm} \frac{\eta\theta}{\sqrt{\hat\varepsilon}(1\hspace*{-0.07cm}-\hspace*{-0.07cm}\theta)}\left\|\nabla f(\X_K)\hspace*{-0.07cm}-\hspace*{-0.07cm}\M_K\right\|_F^2 \hspace*{-0.05cm}\right]\hspace*{-0.75cm}\\
\leq& f(\X_1)-f^* + \sum_{k=1}^K\E_{\F_k}\left[-\frac{\eta}{2}\left\|\bL_{k,\varepsilon}^{-\frac{1}{4p}}\nabla f(\X_k)\bR_{k,\varepsilon}^{-\frac{1}{4q}}\right\|_F^2+ \frac{9\eta\nu}{K^{1/2}} \left(\tr\left(\bL_{k,\varepsilon}^{1/2}\right)+\tr\left(\bR_{k,\varepsilon}^{1/2}\right)\right)\right]\\
&+\left(\frac{\eta\theta}{\sqrt{\hat\varepsilon}(1-\theta)}+\frac{\eta}{\sqrt{\hat\varepsilon}}\right)\E_{\F_1}\left[\left\|\nabla f(\X_1)-\M_1\right\|_F^2\right] + \frac{(K-1)\eta(1-\theta)}{\sqrt{\hat\varepsilon}}\sigma^2\\
=& f(\X_1)-f^*+ \sum_{k=1}^K\E_{\F_k}\left[-\frac{\eta}{2}\left\|\bL_{k,\varepsilon}^{-\frac{1}{4p}}\nabla f(\X_k)\bR_{k,\varepsilon}^{-\frac{1}{4q}}\right\|_F^2+ \frac{9\eta\nu}{K^{1/2}} \left(\tr\left(\bL_{k,\varepsilon}^{1/2}\right)+\tr\left(\bR_{k,\varepsilon}^{1/2}\right)\right)\right]\\
&+\frac{\eta}{\sqrt{\hat\varepsilon}(1-\theta)}\E_{\F_1}\left[\left\|\nabla f(\X_1)-\M_1\right\|_F^2\right] + \frac{(K-1)\eta(1-\theta)}{\sqrt{\hat\varepsilon}}\sigma^2.
\end{aligned}
\end{eqnarray}
As the gradient is $L$-Lipschitz, we have
\begin{eqnarray}
\begin{aligned}\notag
&f^*\hspace*{-0.07cm}\leq\hspace*{-0.07cm} f\hspace*{-0.07cm}\left(\hspace*{-0.07cm}\X\hspace*{-0.07cm}-\hspace*{-0.07cm}\frac{1}{L}\nabla f(\X)\hspace*{-0.1cm}\right)\hspace*{-0.07cm}\leq\hspace*{-0.07cm} f(\X)\hspace*{-0.07cm}-\hspace*{-0.07cm}\frac{1}{L}\hspace*{-0.07cm}\<\nabla f(\X),\hspace*{-0.07cm}\nabla f(\X)\>\hspace*{-0.07cm}+\hspace*{-0.07cm}\frac{L}{2}\hspace*{-0.07cm}\left\|\frac{1}{L}\nabla f(\X)\right\|_F^2\hspace*{-0.07cm}=\hspace*{-0.07cm}f(\X)\hspace*{-0.07cm}-\hspace*{-0.07cm}\frac{1}{2L}\hspace*{-0.07cm}\left\|\nabla f(\X)\right\|_F^2.
\end{aligned}
\end{eqnarray}
Using the recursion of $\M_1$ and $\M_0=\0$, we have
\begin{eqnarray}
\begin{aligned}\notag
\E_{\F_1}\left[\left\|\nabla f(\X_1)-\M_1\right\|_F^2\right]=&\E_{\F_1}\left[\left\|\theta\nabla f(\X_1)+(1-\theta)\left(\nabla f(\X_1)-\G_1\right)\right\|_F^2\right]\\
=&\theta^2\left\|\nabla f(\X_1)\right\|_F^2+(1-\theta)^2\E_{\F_1}\left[\left\|\nabla f(\X_1)-\G_1\right\|_F^2\right]\\
\leq& 2L\left(f(\X_1)-f^*\right) + (1-\theta)^2\sigma^2.
\end{aligned}
\end{eqnarray}
Plugging into (\ref{equ5}), we have
\begin{eqnarray}
\begin{aligned}\notag
&\E_{\F_K}\hspace*{-0.12cm}\left[\hspace*{-0.05cm}f(\X_{K+1})\hspace*{-0.07cm}-\hspace*{-0.07cm}f^* \hspace*{-0.07cm}+\hspace*{-0.07cm} \frac{\eta}{4}\hspace*{-0.07cm}\left\|\lambda\bL_{K,\varepsilon}^{\frac{1}{4p}}\X_K\bR_{K,\varepsilon}^{\frac{1}{4q}}\hspace*{-0.07cm}+\hspace*{-0.07cm}\bL_{K,\varepsilon}^{-\frac{1}{4p}}\M_K\bR_{K,\varepsilon}^{-\frac{1}{4q}}\right\|_F^2 \hspace*{-0.07cm}+\hspace*{-0.07cm} \frac{\eta\theta}{\sqrt{\hat\varepsilon}(1\hspace*{-0.07cm}-\hspace*{-0.07cm}\theta)}\left\|\nabla f(\X_K)\hspace*{-0.07cm}-\hspace*{-0.07cm}\M_K\right\|_F^2 \hspace*{-0.05cm}\right]\\
\leq& f(\X_1)-f^*+ \sum_{k=1}^K\E_{\F_k}\left[-\frac{\eta}{2}\left\|\bL_{k,\varepsilon}^{-\frac{1}{4p}}\nabla f(\X_k)\bR_{k,\varepsilon}^{-\frac{1}{4q}}\right\|_F^2+ \frac{9\eta\nu}{K^{1/2}} \left(\tr\left(\bL_{k,\varepsilon}^{1/2}\right)+\tr\left(\bR_{k,\varepsilon}^{1/2}\right)\right)\right]\\
&+\frac{2L\eta}{\sqrt{\hat\varepsilon}(1-\theta)}\left(f(\X_1)-f^*\right) + \frac{K\eta(1-\theta)}{\sqrt{\hat\varepsilon}}\sigma^2
\end{aligned}
\end{eqnarray}
and
\begin{eqnarray}
\begin{aligned}\label{equ7}
&\sum_{k=1}^K\E_{\F_k}\left[\left\|\bL_{k,\varepsilon}^{-\frac{1}{4p}}\nabla f(\X_k)\bR_{k,\varepsilon}^{-\frac{1}{4q}}\right\|_F^2\right]\\
\leq& \frac{18\nu}{K^{1/2}}\hspace*{-0.075cm}\sum_{k=1}^K\hspace*{-0.075cm}\E_{\F_k}\hspace*{-0.13cm}\left[\hspace*{-0.05cm}\tr\hspace*{-0.075cm}\left(\hspace*{-0.075cm}\bL_{k,\varepsilon}^{1/2}\right)\hspace*{-0.075cm}+\hspace*{-0.075cm}\tr\hspace*{-0.075cm}\left(\hspace*{-0.075cm}\bR_{k,\varepsilon}^{1/2}\right)\hspace*{-0.05cm}\right] \hspace*{-0.08cm}+\hspace*{-0.075cm}\frac{2\hspace*{-0.05cm}\left(f(\X_1\hspace*{-0.05cm})\hspace*{-0.075cm}-\hspace*{-0.075cm}f^*\hspace*{-0.05cm}\right)}{\eta}\hspace*{-0.075cm}+\hspace*{-0.075cm}\frac{4L}{\sqrt{\hat\varepsilon}(\hspace*{-0.05cm}1\hspace*{-0.075cm}-\hspace*{-0.075cm}\theta)}\hspace*{-0.05cm}\left(f\hspace*{-0.05cm}(\X_1\hspace*{-0.05cm})\hspace*{-0.075cm}-\hspace*{-0.075cm}f^*\hspace*{-0.05cm}\right) \hspace*{-0.075cm}+\hspace*{-0.075cm} \frac{2K\hspace*{-0.05cm}(1\hspace*{-0.075cm}-\hspace*{-0.075cm}\theta)}{\sqrt{\hat\varepsilon}}\sigma^2\\
\leq& \frac{18\nu}{K^{1/2}}\hspace*{-0.075cm}\sum_{k=1}^K\hspace*{-0.075cm}\E_{\F_k}\hspace*{-0.13cm}\left[\hspace*{-0.05cm}\tr\hspace*{-0.075cm}\left(\hspace*{-0.075cm}\bL_{k,\varepsilon}^{1/2}\right)\hspace*{-0.075cm}+\hspace*{-0.075cm}\tr\hspace*{-0.075cm}\left(\hspace*{-0.075cm}\bR_{k,\varepsilon}^{1/2}\right)\hspace*{-0.05cm}\right] \hspace*{-0.08cm}+\underbrace{\hspace*{-0.075cm}\frac{2\hspace*{-0.05cm}\left(f(\X_1\hspace*{-0.05cm})\hspace*{-0.075cm}-\hspace*{-0.075cm}f^*\hspace*{-0.05cm}\right)}{\eta}\hspace*{-0.075cm}+\hspace*{-0.075cm}\frac{4L}{\sqrt{\hat\varepsilon}(\hspace*{-0.05cm}1\hspace*{-0.075cm}-\hspace*{-0.075cm}\theta)}\hspace*{-0.05cm}\left(f\hspace*{-0.05cm}(\X_1\hspace*{-0.05cm})\hspace*{-0.075cm}-\hspace*{-0.075cm}f^*\hspace*{-0.05cm}\right) \hspace*{-0.075cm}+\hspace*{-0.075cm} \frac{2K\hspace*{-0.05cm}(1\hspace*{-0.075cm}-\hspace*{-0.075cm}\theta)}{\sqrt{\hat\varepsilon}}\hat\sigma^2}_{\widetilde C},
\end{aligned}
\end{eqnarray}
where we denote $\hat\sigma^2=\max\left\{\sigma^2,\frac{L\left(f(\X_1)-f^*\right)}{K\gamma^2}\right\}$ with any $\gamma\in(0,1)$. From Lemma \ref{second-moment-bound} with $\omega=1$, we have
\begin{eqnarray}
\begin{aligned}\notag
&\sum_{k=1}^K\E\left[\tr\left(\bL_{k,\varepsilon}^{1/2}\right)+\tr\left(\bR_{k,\varepsilon}^{1/2}\right)\right]\\
\leq& K\left(\underbrace{\tr\left(\Sigma_L^{1/2}\right)+\tr\left(\Sigma_R^{1/2}\right)+(m+n)\sqrt{\varepsilon}}_{C}\right)+\frac{4}{\sqrt{1-\beta}}\sum_{t=1}^K\E\left[\left\|\nabla f(\X_t)\right\|_*\right]. 
\end{aligned}
\end{eqnarray}
From Lemma \ref{gradient-bound} with $\omega=1$, we have
\begin{eqnarray}
\begin{aligned}\notag
&\sum_{k=1}^K\E\left[\left\|\nabla f(\X_k)\right\|_*\right]\\
\leq&\sqrt{\left(\sum_{k=1}^K\E\left[\tr\left(\bL_{k,\varepsilon}^{1/2}\right)+\tr\left(\bR_{k,\varepsilon}^{1/2}\right)\right]\right)\left(\sum_{k=1}^K\E\left[\left\|\bL_{k,\varepsilon}^{-\frac{1}{4p}}\nabla f(\X_k)\bR_{k,\varepsilon}^{-\frac{1}{4q}}\right\|_F^2\right]\right)}\\
\leq&\sqrt{\left(\sum_{k=1}^K\E\left[\tr\left(\bL_{k,\varepsilon}^{1/2}\right)+\tr\left(\bR_{k,\varepsilon}^{1/2}\right)\right]\right)\left( \frac{18\nu}{K^{1/2}}\sum_{k=1}^K\E\left[\tr\left(\bL_{k,\varepsilon}^{1/2}\right)+\tr\left(\bR_{k,\varepsilon}^{1/2}\right)\right] + \widetilde C\right)}\\
\leq&\sqrt{\hspace*{-0.04cm}\left(\hspace*{-0.04cm}\frac{4}{\sqrt{1\hspace*{-0.04cm}-\hspace*{-0.04cm}\beta}}\hspace*{-0.04cm}\sum_{t=1}^K\hspace*{-0.04cm}\E\hspace*{-0.04cm}\left[\left\|\nabla f(\X_t)\right\|_*\right]\hspace*{-0.04cm}+\hspace*{-0.04cm}KC\hspace*{-0.04cm}\right)\hspace*{-0.04cm}\left( \hspace*{-0.04cm}\frac{18\nu}{K^{1/2}}\frac{4}{\sqrt{1\hspace*{-0.04cm}-\hspace*{-0.04cm}\beta}}\hspace*{-0.04cm}\sum_{t=1}^K\hspace*{-0.04cm}\E\hspace*{-0.04cm}\left[\left\|\nabla f(\X_t)\right\|_*\right] \hspace*{-0.04cm}+\hspace*{-0.04cm} 18\nu CK^{1/2} \hspace*{-0.04cm}+\hspace*{-0.04cm} \widetilde C\right)}.
\end{aligned}
\end{eqnarray}
So we have
\begin{eqnarray}
\begin{aligned}\notag
&\left(\sum_{k=1}^K\E\left[\left\|\nabla f(\X_k)\right\|_*\right]\right)^2\leq\frac{288\nu}{K^{1/2}(1-\beta)}\left(\sum_{k=1}^K\E\left[\left\|\nabla f(\X_k)\right\|_*\right]\right)^2 \\
&\qquad+ \frac{4}{\sqrt{1-\beta}}\left(36\nu CK^{1/2} + \widetilde C\right)\sum_{k=1}^K\E\left[\left\|\nabla f(\X_k)\right\|_*\right] + 18\nu C^2K^{3/2} + K\widetilde CC.\hspace*{-0.8cm}
\end{aligned}
\end{eqnarray}
Next, we consider the constants. From Definition \ref{def-X-p} and Fact \ref{PSD-eig-singular}, we have
\begin{eqnarray}
\begin{aligned}\notag
\tr\left(\Sigma_L^{1/2}\right)+\tr\left(\Sigma_R^{1/2}\right)=&\sum_{i=1}^m\sqrt{\sigma_i(\Sigma_L)} + \sum_{i=1}^n\sqrt{\sigma_i(\Sigma_R)}\\
\leq& \sqrt{(m+n)\left(\sum_{i=1}^m \sigma_i(\Sigma_L)+\sum_{i=1}^n \sigma_i(\Sigma_R)\right)}\\
=&\sqrt{(m+n)\left(\tr\left(\Sigma_L\right)+\tr\left(\Sigma_R\right)\right)}\\
=&\sigma\sqrt{2(m+n)}\leq\hat\sigma\sqrt{2(m+n)}.
\end{aligned}
\end{eqnarray} 
Letting $\varepsilon=\frac{\tau\hat\sigma^2}{m+n}$ with any $\tau\leq 1$, we have $(m+n)\sqrt{\varepsilon}\leq \hat\sigma\sqrt{m+n}$ and 
\begin{eqnarray}
\begin{aligned}\notag
C\leq 2.5\hat\sigma\sqrt{m+n}.
\end{aligned}
\end{eqnarray} 
Recall that we require the parameters satisfying the following relations in the above proof
\begin{eqnarray}
\begin{aligned}\notag
\eta\leq \frac{\sqrt{\hat\varepsilon}}{2L},\quad \eta^2\leq\frac{\hat\varepsilon(1-\theta)^2}{4L^2}
\end{aligned}
\end{eqnarray} 
and
\begin{eqnarray}
\begin{aligned}\notag
\eta\lambda\leq \frac{\sqrt{\nu}}{2K^{5/4}}, \quad\|\X_1\|_{op}\leq \frac{\sqrt{\nu}}{K^{1/4}\lambda}, \quad\frac{\sqrt{\nu}}{K^{1/4}}\leq 1, \quad\theta\leq\beta\leq\sqrt{\theta}<1
\end{aligned}
\end{eqnarray} 
in Lemma \ref{spectral-norm-bound}. Letting
\begin{eqnarray}
\begin{aligned}\notag
&1-\theta=\sqrt{\frac{L\left(f(\X_1)-f^*\right)}{K\hat\sigma^2}},\quad \eta=\sqrt{\frac{\hat\varepsilon\left(f(\X_1)-f^*\right)}{4LK\hat\sigma^2}},\quad\nu=\frac{1}{1152}\sqrt{\frac{L(f(\X_1)-f^*)}{\hat\sigma^2}},\\
&\hspace*{1.3cm}\lambda\leq\frac{1}{\sqrt{1152\hat\varepsilon}K^{3/4}}\sqrt[4]{\frac{L^3\hat\sigma^2}{f(\X_1)-f^*}},\quad\|\X_1\|_{op}\leq\sqrt{\frac{\hat\varepsilon K\left(f(\X_1)-f^*\right)}{L\hat\sigma^2}},
\end{aligned}
\end{eqnarray} 
the above requirements are satisfied by the definition of $\hat\sigma^2$. We also have
\begin{eqnarray}
\begin{aligned}\notag
&\frac{1}{1-\beta}\leq\frac{1}{1-\sqrt{\theta}}\leq\frac{2}{1-\theta}=2\sqrt{\frac{K\hat\sigma^2}{L\left(f(\X_1)-f^*\right)}}, \quad\frac{288\nu}{K^{1/2}(1-\beta)}\leq \frac{1}{2},\\
& \widetilde C\leq10\sqrt{\frac{K\hat\sigma^2L\left(f(\X_1)-f^*\right)}{\hat\varepsilon}},\quad \frac{\widetilde C}{\sqrt{1-\beta}}\leq  \frac{14.2\hat\sigma}{\sqrt{\hat\varepsilon}}\sqrt[4]{K^3\hat\sigma^2L\left(f(\X_1)-f^*\right)},\\
& \frac{\nu CK^{1/2}}{\sqrt{1-\beta}}\leq \frac{\sqrt{m+n}}{288}\sqrt[4]{K^3\hat\sigma^2L\left(f(\X_1)-f^*\right)},\\
&\frac{4}{\sqrt{1-\beta}}\left(36\nu CK^{1/2} + \widetilde C\right)\leq \left(\sqrt{m+n}+\frac{57\hat\sigma}{\sqrt{\hat\varepsilon}}\right)\sqrt[4]{K^3\hat\sigma^2L\left(f(\X_1)-f^*\right)},\\
&\nu C^2K^{3/2}\leq \frac{m+n}{144}\sqrt{K^3\hat\sigma^2L\left(f(\X_1)-f^*\right)},\quad K\widetilde CC\leq 25\hat\sigma\sqrt{\frac{m+n}{\hat\varepsilon}}\sqrt{K^3\hat\sigma^2L\left(f(\X_1)-f^*\right)},\\
&18\nu C^2K^{3/2} + K\widetilde CC\leq \left(m+n+25\hat\sigma\sqrt{\frac{m+n}{\hat\varepsilon}}\right)\sqrt{K^3\hat\sigma^2L\left(f(\X_1)-f^*\right)}\\
&\hspace*{3.05cm}\leq \left(\frac{27}{2}(m+n)+\frac{25}{2}\frac{\hat\sigma^2}{\hat\varepsilon}\right)\sqrt{K^3\hat\sigma^2L\left(f(\X_1)-f^*\right)}.
\end{aligned}
\end{eqnarray} 
So we have
\begin{eqnarray}
\begin{aligned}\notag
\frac{1}{2}\left(\sum_{k=1}^K\E\left[\left\|\nabla f(\X_k)\right\|_*\right]\right)^2\leq&\left(\sqrt{m+n}+\frac{57\hat\sigma}{\sqrt{\hat\varepsilon}}\right)\sqrt[4]{K^3\hat\sigma^2L\left(f(\X_1)-f^*\right)}\sum_{k=1}^K\E\left[\left\|\nabla f(\X_k)\right\|_*\right]\\
&+\left(\frac{27}{2}(m+n)+\frac{25}{2}\frac{\hat\sigma^2}{\hat\varepsilon}\right)\sqrt{K^3\hat\sigma^2L\left(f(\X_1)-f^*\right)}.
\end{aligned}
\end{eqnarray}
Solving inequality $x^2-ax-b\leq 0$, we have $x\leq\frac{a+\sqrt{a^2+4b}}{2}\leq a+\sqrt{b}$ and
\begin{eqnarray}
\begin{aligned}\notag
\sum_{k=1}^K\E\left[\left\|\nabla f(\X_k)\right\|_*\right]\leq& \left(2\sqrt{m+n}+\frac{114\hat\sigma}{\sqrt{\hat\varepsilon}} +\sqrt{27(m+n)+25\frac{\hat\sigma^2}{\hat\varepsilon}}\right)\sqrt[4]{K^3\hat\sigma^2L\left(f(\X_1)-f^*\right)}\\
\leq& \left(8\sqrt{m+n}+\frac{119\hat\sigma}{\sqrt{\hat\varepsilon}} \right)\sqrt[4]{K^3\hat\sigma^2L\left(f(\X_1)-f^*\right)}\\
=& \left(8\sqrt{m+n}+\frac{119\hat\sigma}{\sqrt{\hat\varepsilon}} \right)\hspace*{-0.08cm}\max\hspace*{-0.08cm}\left\{\hspace*{-0.12cm}\sqrt[4]{K^3\sigma^2L\hspace*{-0.08cm}\left(f(\X_1)\hspace*{-0.08cm}-\hspace*{-0.08cm}f^*\right)},  \sqrt{ \frac{K L\hspace*{-0.08cm}\left(f(\X_1)\hspace*{-0.08cm}-\hspace*{-0.08cm}f^*\right)}{\gamma} }\right\}\hspace*{-0.08cm}. 
\end{aligned}
\end{eqnarray} 
Dividing both sides by $K$, we have the desired bound. Finally, Lemma \ref{spectral-norm-bound} guarantees
\begin{eqnarray}
\begin{aligned}\notag
\lambda\|\X_k\|_{op}\leq \frac{3\sqrt{\nu}}{K^{1/4}}=\frac{3}{\sqrt{1152}}\sqrt[4]{\frac{L\left(f(\X_1)-f^*\right)}{K\hat\sigma^2}}<1
\end{aligned}
\end{eqnarray} 
by the setting of $\hat\sigma^2$.
\end{proof}

\subsection{Proof of Theorem \ref{main-theorem2}}\label{sec:proof2}
Since we do not consider decoupled weight decay when $\omega\in(0,2]$, Theorem \ref{main-theorem2} does not rely on Lemmas \ref{bound-X}, \ref{bound-update-lemma}, and \ref{spectral-norm-bound}. Consequently, the condition $\theta\leq\beta\leq\sqrt{\theta}$ is no longer required, and $\beta$ can be any constant in  $(0,1)$ that does not depend on $K$.
\begin{proof}
When $\lambda=0$, term (a) in (\ref{equ1}) disappears. Following a similar proof to that of Theorem \ref{main-theorem} and replacing $\frac{1}{p}+\frac{1}{q}=1$ by $\frac{1}{p}+\frac{1}{q}=2-\omega$, we have
\begin{eqnarray}
\begin{aligned}\notag
&\left\|\bL_{k,\varepsilon}^{-\frac{1}{4p}}\left(\nabla f(\X_k)-\M_k\right)\bR_{k,\varepsilon}^{-\frac{1}{4q}}\right\|_F^2\leq\frac{1}{\hat\varepsilon^{1-\omega/2}}\left\|\nabla f(\X_k)-\M_k\right\|_F^2,\\
&\left\|\bL_{k,\varepsilon}^{-\frac{1}{2p}}\M_k\bR_{k,\varepsilon}^{-\frac{1}{2q}}\right\|_F^2\leq\frac{1}{\hat\varepsilon^{1-\omega/2}}\left\|\bL_{k,\varepsilon}^{-\frac{1}{4p}}\M_k\bR_{k,\varepsilon}^{-\frac{1}{4q}}\right\|_F^2,\\
\end{aligned}
\end{eqnarray}
and
\begin{eqnarray}
\begin{aligned}\notag
&\E_k\left[f(\X_{k+1})\big|\F_{k-1}\right]-f(\X_k)\\
\leq& \E_k\left[-\frac{\eta}{2}\left\|\bL_{k,\varepsilon}^{-\frac{1}{4p}}\nabla f(\X_k)\bR_{k,\varepsilon}^{-\frac{1}{4q}}\right\|_F^2 - \frac{\eta}{2}\left\|\bL_{k,\varepsilon}^{-\frac{1}{4p}}\M_k\bR_{k,\varepsilon}^{-\frac{1}{4q}}\right\|_F^2\right.\\
&\quad\left.+ \frac{\eta}{\hat\varepsilon^{1-\omega/2}}\left\|\nabla f(\X_k)-\M_k\right\|_F^2 +\frac{L\eta^2}{2\hat\varepsilon^{1-\omega/2}}\left\|\bL_{k,\varepsilon}^{-\frac{1}{4p}}\M_k\bR_{k,\varepsilon}^{-\frac{1}{4q}}\right\|_F^2 \big|\F_{k-1}\right]\\
\leq& \E_k\left[-\frac{\eta}{2}\left\|\bL_{k,\varepsilon}^{-\frac{1}{4p}}\nabla f(\X_k)\bR_{k,\varepsilon}^{-\frac{1}{4q}}\right\|_F^2 \hspace*{-0.05cm}-\hspace*{-0.05cm} \frac{\eta}{4}\left\|\bL_{k,\varepsilon}^{-\frac{1}{4p}}\M_k\bR_{k,\varepsilon}^{-\frac{1}{4q}}\right\|_F^2 \hspace*{-0.05cm}+\hspace*{-0.05cm} \frac{\eta}{\hat\varepsilon^{1-\omega/2}}\left\|\nabla f(\X_k)-\M_k\right\|_F^2\big|\F_{k-1}\hspace*{-0.05cm}\right]
\end{aligned}
\end{eqnarray}
by letting $\eta\leq\frac{\hat\varepsilon^{1-\omega/2}}{2L}$. Multiplying both sides of (\ref{GM-dif-equ}) with $\lambda=0$ by $\frac{\eta}{\hat\varepsilon^{1-\omega/2}(1-\theta)}$, adding it to the above inequality, and arranging the terms, we have
\begin{eqnarray}
\begin{aligned}\notag
&\E_k\left[f(\X_{k+1})-f^* + \frac{\eta}{4}\left\|\bL_{k,\varepsilon}^{-\frac{1}{4p}}\M_k\bR_{k,\varepsilon}^{-\frac{1}{4q}}\hspace*{-0.05cm}\right\|_F^2 + \frac{\eta\theta}{\hat\varepsilon^{1-\omega/2}(1-\theta)}\left\|\nabla f(\X_k)-\M_k\right\|_F^2 \big|\F_{k-1}\right]\hspace*{-0.75cm}\\
\leq& f(\X_k)\hspace*{-0.04cm}-\hspace*{-0.04cm}f^* \hspace*{-0.04cm}-\hspace*{-0.04cm} \frac{\eta}{2}\E_k\left[\left\|\bL_{k,\varepsilon}^{-\frac{1}{4p}}\nabla f(\X_k)\bR_{k,\varepsilon}^{-\frac{1}{4q}}\right\|_F^2 \big|\F_{k-1}\right]\hspace*{-0.04cm}+\hspace*{-0.04cm}\frac{\eta\theta}{\hat\varepsilon^{1-\omega/2}(1-\theta)}\left\|\nabla f(\X_{k-1})-\M_{k-1}\right\|_F^2\\
&+\frac{ L^2\eta^3}{\hat\varepsilon^{2-\omega}(1-\theta)^2}\left\|\bL_{k-1,\varepsilon}^{-\frac{1}{4p}}\M_{k-1}\bR_{k-1,\varepsilon}^{-\frac{1}{4q}}\right\|_F^2 + \frac{\eta(1-\theta)}{\hat\varepsilon^{1-\omega/2}}\sigma^2\\
\leq& f(\X_k)\hspace*{-0.04cm}-\hspace*{-0.04cm}f^* \hspace*{-0.04cm}-\hspace*{-0.04cm} \frac{\eta}{2}\E_k\left[\left\|\bL_{k,\varepsilon}^{-\frac{1}{4p}}\nabla f(\X_k)\bR_{k,\varepsilon}^{-\frac{1}{4q}}\right\|_F^2 \big|\F_{k-1}\right]\hspace*{-0.04cm}+\hspace*{-0.04cm}\frac{\eta\theta}{\hat\varepsilon^{1-\omega/2}(1-\theta)}\left\|\nabla f(\X_{k-1})-\M_{k-1}\right\|_F^2\\
&+\frac{ \eta}{4}\left\|\bL_{k-1,\varepsilon}^{-\frac{1}{4p}}\M_{k-1}\bR_{k-1,\varepsilon}^{-\frac{1}{4q}}\right\|_F^2 + \frac{\eta(1-\theta)}{\hat\varepsilon^{1-\omega/2}}\sigma^2,
\end{aligned}
\end{eqnarray}
where we let $\eta^2\leq\frac{\hat\varepsilon^{2-\omega}(1-\theta)^2}{4L^2}$ in the last inequality. Similar to the proof of Theorem \ref{main-theorem}, we have
\begin{eqnarray}
\begin{aligned}\label{equ13}
&\sum_{k=1}^K\E\left[\left\|\bL_{k,\varepsilon}^{-\frac{1}{4p}}\nabla f(\X_k)\bR_{k,\varepsilon}^{-\frac{1}{4q}}\right\|_F^2\right]\\
\leq& \underbrace{\frac{2\left(f(\X_1)-f^*\right)}{\eta}+\frac{4L}{\hat\varepsilon^{1-\omega/2}(1-\theta)}\left(f(\X_1)-f^*\right) + \frac{2K(1-\theta)}{\hat\varepsilon^{1-\omega/2}}\hat\sigma^2}_{\widetilde C}.
\end{aligned}
\end{eqnarray}
Indeed, the above proof is essentially the same as the proof of Theorem \ref{main-theorem}, except that we replace $\sqrt{\hat\varepsilon}$ with $\hat\varepsilon^{1-\omega/2}$ and set $\lambda=0$. The rest of the proof differs from that of Theorem \ref{main-theorem}. 

Plugging the above inequality into (\ref{gradient-bound-final}), we have
\begin{eqnarray}
\begin{aligned}\notag
\sum_{k=1}^K\E\left[\left\||\nabla f(\X_k)|^{\omega}\right\|_*\right]\leq&\widetilde C^{\omega/2}\left(KC+\frac{2(1-\beta)^{\omega/2}}{1-\beta^{\omega/2}}\sum_{t=1}^K\E\left[\left\||\nabla f(\X_t)|^{\omega}\right\|_*\right]\right)^{1-\omega/2}\\
\overset{a}\leq&\widetilde C^{\omega/2}\left((KC)^{1-\omega/2}+\left(\frac{2(1-\beta)^{\omega/2}}{1-\beta^{\omega/2}}\sum_{t=1}^K\E\left[\left\||\nabla f(\X_t)|^{\omega}\right\|_*\right]\right)^{1-\omega/2}\right),
\end{aligned}
\end{eqnarray} 
where $C=\tr\left(\Sigma_L^{\omega/2}\right)+\tr\left(\Sigma_R^{\omega/2}\right)+(m+n)\varepsilon^{\omega/2}$ and we use $(x+y)^{\alpha}\leq x^{\alpha}+y^{\alpha}$ for $0\leq\alpha\leq1$ and the fact $\omega\in(0,2]$ in $\overset{a}\leq$. From Young's inequality $xy\leq\frac{x^t}{t}+\frac{y^s}{s}$ with $\frac{1}{t}+\frac{1}{s}=1$ and $t,s>0$, we have
\begin{eqnarray}
\begin{aligned}\notag
&\widetilde C^{\omega/2}\left(\frac{2(1-\beta)^{\omega/2}}{1-\beta^{\omega/2}}\right)^{1-\omega/2}\left(\sum_{t=1}^K\E\left[\left\||\nabla f(\X_t)|^{\omega}\right\|_*\right]\right)^{1-\omega/2}\\
\leq&\frac{\omega}{2}\left(\widetilde C^{\omega/2}\left(\frac{2(1-\beta)^{\omega/2}}{1-\beta^{\omega/2}}\right)^{1-\omega/2}\right)^{2/\omega} + \left(1-\frac{\omega}{2}\right)\left(\sum_{t=1}^K\E\left[\left\||\nabla f(\X_t)|^{\omega}\right\|_*\right]\right)
\end{aligned}
\end{eqnarray} 
for $0<\omega<2$. The above inequality also holds trivially for $\omega=2$. So we have
\begin{eqnarray}
\begin{aligned}\label{equ8}
\sum_{k=1}^K\E\left[\left\||\nabla f(\X_k)|^{\omega}\right\|_*\right]\leq& \frac{2}{\omega}\widetilde C^{\omega/2}(KC)^{1-\omega/2} + \left(\widetilde C^{\omega/2}\left(\frac{2(1-\beta)^{\omega/2}}{1-\beta^{\omega/2}}\right)^{1-\omega/2}\right)^{2/\omega}\\
=& \frac{2}{\omega}\widetilde C^{\omega/2}(KC)^{1-\omega/2} + \widetilde C\underbrace{\left(\frac{2(1-\beta)^{\omega/2}}{1-\beta^{\omega/2}}\right)^{2/\omega-1}}_{\hat C}.
\end{aligned}
\end{eqnarray}
Next, we consider the constants. Choosing $\beta$ as a constant independent of $K$, it follows that $\hat C$ is also independent of $K$. Letting 
\begin{eqnarray}
\begin{aligned}\notag
&1-\theta=\sqrt{\frac{L\left(f(\X_1)-f^*\right)}{K\hat\sigma^2}},\quad \eta=\hat\varepsilon^{1-\omega/2}\sqrt{\frac{f(\X_1)-f^*}{4LK\hat\sigma^2}},\quad \varepsilon=\frac{\tau\hat\sigma^2}{m+n},\quad \tau\leq 1,
\end{aligned}
\end{eqnarray} 
we have
\begin{eqnarray}
\begin{aligned}\label{equ9}
\widetilde C\leq\frac{10\sqrt{K\hat\sigma^2L\left(f(\X_1)-f^*\right)}}{\hat\varepsilon^{1-\omega/2}}.
\end{aligned}
\end{eqnarray} 
From Definition \ref{def-X-p}, Fact \ref{PSD-eig-singular}, $\omega\in(0,2]$, and Jensen's inequality, we have
\begin{eqnarray}
\begin{aligned}\notag
\frac{\tr\left(\Sigma_L^{\omega/2}\right)+\tr\left(\Sigma_R^{\omega/2}\right)}{m+n}=&\frac{\sum_{i=1}^m\left(\sigma_i(\Sigma_L)\right)^{\omega/2} + \sum_{i=1}^n\left(\sigma_i(\Sigma_R)\right)^{\omega/2}}{m+n}\\
\leq& \left(\frac{\sum_{i=1}^m\sigma_i(\Sigma_L) + \sum_{i=1}^n\sigma_i(\Sigma_R)}{m+n}\right)^{\omega/2}\\
=& \left(\frac{\tr(\Sigma_L) +\tr(\Sigma_R)}{m+n}\right)^{\omega/2}=\left(\frac{2\hat\sigma^2}{m+n}\right)^{\omega/2}
\end{aligned}
\end{eqnarray} 
and 
\begin{eqnarray}
\begin{aligned}\notag
\tr\left(\Sigma_L^{\omega/2}\right)+\tr\left(\Sigma_R^{\omega/2}\right)\leq2(m+n)^{1-\omega/2}\hat\sigma^{\omega}.
\end{aligned}
\end{eqnarray} 
From the setting of $\varepsilon=\frac{\tau\hat\sigma^2}{m+n}$, we have $(m+n)\varepsilon^{\omega/2}\leq (m+n)^{1-\omega/2}\hat\sigma^{\omega}$ and 
\begin{eqnarray}
\begin{aligned}\label{equ10}
C\leq 3(m+n)^{1-\omega/2}\hat\sigma^{\omega}.
\end{aligned}
\end{eqnarray} 
Plugging (\ref{equ9}) and (\ref{equ10}) into (\ref{equ8}), we have
\begin{eqnarray}
\begin{aligned}\notag
&\sum_{k=1}^K\E\left[\left\||\nabla f(\X_k)|^{\omega}\right\|_*\right]\\
\leq& \frac{20\hat\sigma^{\frac{3\omega}{2}-\frac{\omega^2}{2}}(m+n)^{(1-\frac{\omega}{2})^2}}{\omega}K^{1-\frac{\omega}{4}}\frac{\left(L\left(f(\X_1)-f^*\right)\right)^{\frac{\omega}{4}}}{\hat\varepsilon^{\frac{\omega}{2}-\frac{\omega^2}{4}}} + \frac{10\hat C\sqrt{K\hat\sigma^2L\left(f(\X_1)-f^*\right)}}{\hat\varepsilon^{1-\frac{\omega}{2}}}.
\end{aligned}
\end{eqnarray}
Specifically, when $\hat\varepsilon=\varepsilon=\frac{\tau\hat\sigma^2}{m+n}$, we have
\begin{eqnarray}
\begin{aligned}\notag
&\sum_{k=1}^K\E\left[\left\||\nabla f(\X_k)|^{\omega}\right\|_*\right]\\
\leq& \frac{20\hat\sigma^{\frac{\omega}{2}}(m+n)^{1-\frac{\omega}{2}}}{\omega}K^{1-\frac{\omega}{4}}\frac{\left(L\left(f(\X_1)-f^*\right)\right)^{\frac{\omega}{4}}}{\tau^{\frac{\omega}{2}-\frac{\omega^2}{4}}} + \frac{10\hat C(m+n)^{1-\frac{\omega}{2}}\hat\sigma^{\omega-1}\sqrt{KL\left(f(\X_1)-f^*\right)}}{\tau^{1-\frac{\omega}{2}}}\\
=& \frac{20(m+n)^{1-\frac{\omega}{2}}}{\omega\tau^{\frac{\omega}{2}-\frac{\omega^2}{4}}}\max\left\{\sigma^{\frac{\omega}{2}}K\left(\frac{L\left(f(\X_1)-f^*\right)}{K}\right)^{\frac{\omega}{4}},K\left(\frac{L\left(f(\X_1)-f^*\right)}{K\gamma}\right)^{\frac{\omega}{2}}\right\} \\
& + \frac{10\hat C(m+n)^{1-\frac{\omega}{2}}\hat\sigma^{\omega-1}\sqrt{KL\left(f(\X_1)-f^*\right)}}{\tau^{1-\frac{\omega}{2}}},
\end{aligned}
\end{eqnarray}
where we use the setting $\hat\sigma^2=\max\left\{\sigma^2,\frac{L\left(f(\X_1)-f^*\right)}{K\gamma^2}\right\}$ for the first part in the last equation. When $\omega\in(0,1)$, from $\hat\sigma^2\geq\frac{L\left(f(\X_1)-f^*\right)}{K\gamma^2}$, the second part can be further bounded as 
\begin{eqnarray}
\begin{aligned}\notag
\frac{10\hat C(m+n)^{1-\frac{\omega}{2}}\sqrt{KL\left(f(\X_1)-f^*\right)}}{\hat\sigma^{1-\omega}\tau^{1-\frac{\omega}{2}}}\leq \frac{10\hat C(m+n)^{1-\frac{\omega}{2}}K}{\tau^{1-\frac{\omega}{2}}\gamma^{\omega-1}}\left(\frac{L\left(f(\X_1)-f^*\right)}{K}\right)^{\omega/2}.
\end{aligned}
\end{eqnarray}
When $\omega\in[1,2]$, it can be written equivalently as
\begin{eqnarray}
\begin{aligned}\notag
&\frac{10\hat C(m+n)^{1-\frac{\omega}{2}}\hat\sigma^{\omega-1}\sqrt{KL\left(f(\X_1)-f^*\right)}}{\tau^{1-\frac{\omega}{2}}}\\
= &\frac{10\hat C(m+n)^{1-\frac{\omega}{2}}}{\tau^{1-\frac{\omega}{2}}}\max\left\{\sigma^{\omega-1}\sqrt{KL\left(f(\X_1)-f^*\right)},\frac{K}{\gamma^{\omega-1}}\left(\frac{L\left(f(\X_1)-f^*\right)}{K}\right)^{\omega/2}\right\}.
\end{aligned}
\end{eqnarray}
Dividing both sides by $K$, we have the desired bound.

\end{proof}

\subsection{Supporting Lemmas}\label{sec:supprt-lemmas}
The following lemma is similar to \citep[Lemma 4]{Li-2025-nips} and we include the proof for completeness. 
\begin{lemma}\label{momentum-bound-lemma}
Suppose that Assumptions 1-3 and condition (\ref{L-assump}) hold and let $\frac{1}{p}+\frac{1}{q}=2-\omega$. Then for Algorithm \ref{alg1}, we have
\begin{eqnarray}
\begin{aligned}\label{GM-dif-equ}
&\E_k\left[\left\|\M_k-\nabla f(\X_k)\right\|_F^2\big|\F_{k-1}\right]\leq\theta\left\|\M_{k-1}-\nabla f(\X_{k-1})\right\|_F^2 \\
&\qquad+ \frac{L^2\eta^2}{(1-\theta)\hat\varepsilon^{1-\omega/2}}\left\|\lambda\bL_{k-1,\varepsilon}^{\frac{1}{4p}}\X_{k-1}\bR_{k-1,\varepsilon}^{\frac{1}{4q}}+\bL_{k-1,\varepsilon}^{-\frac{1}{4p}}\M_{k-1}\bR_{k-1,\varepsilon}^{-\frac{1}{4q}}\right\|_F^2 + (1-\theta)^2\sigma^2.
\end{aligned}
\end{eqnarray}
\end{lemma}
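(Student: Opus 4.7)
The plan is to mimic the standard momentum-variance recursion used in SGD analyses, with the extra twist that the final step must be rewritten in the preconditioned norm that appears in the statement.

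First I would expand the momentum recursion. Writing $\M_k = \theta\M_{k-1} + (1-\theta)\G_k$ gives
\begin{eqnarray}
\begin{aligned}\notag
\nabla f(\X_k) - \M_k = \theta\bigl(\nabla f(\X_k) - \M_{k-1}\bigr) + (1-\theta)\bigl(\nabla f(\X_k) - \G_k\bigr).
\end{aligned}
\end{eqnarray}
Taking conditional expectation and using Assumption 2 ($\E_k[\G_k|\F_{k-1}] = \nabla f(\X_k)$) kills the cross term, and Assumption 3 (via the derived bound $\E_k[\|\G_k - \nabla f(\X_k)\|_F^2|\F_{k-1}]\le\sigma^2$) gives
\begin{eqnarray}
\begin{aligned}\notag
\E_k\bigl[\|\nabla f(\X_k) - \M_k\|_F^2\big|\F_{k-1}\bigr] \le \theta^2\|\nabla f(\X_k) - \M_{k-1}\|_F^2 + (1-\theta)^2\sigma^2.
\end{aligned}
\end{eqnarray}

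Next I would convert the $\theta^2$ coefficient into the required $\theta$ by Young's inequality. Splitting $\nabla f(\X_k) - \M_{k-1} = (\nabla f(\X_{k-1}) - \M_{k-1}) + (\nabla f(\X_k) - \nabla f(\X_{k-1}))$ and applying $\|a+b\|_F^2 \le (1+c)\|a\|_F^2 + (1+1/c)\|b\|_F^2$ with $c = (1-\theta)/\theta$ (so that $\theta^2(1+c) = \theta$ and $\theta^2(1+1/c) = \theta^2/(1-\theta)\le 1/(1-\theta)$), the first term becomes $\theta\|\nabla f(\X_{k-1}) - \M_{k-1}\|_F^2$, while the cross contribution is controlled by $\frac{1}{1-\theta}\|\nabla f(\X_k) - \nabla f(\X_{k-1})\|_F^2$, which by Assumption 1 (Lipschitz smoothness) is bounded by $\frac{L^2}{1-\theta}\|\X_k - \X_{k-1}\|_F^2$.

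The main obstacle, and the step specific to the matrix preconditioned setting, is to rewrite $\|\X_k - \X_{k-1}\|_F^2$ in the structured form appearing in the lemma. From the update rule, $\X_k - \X_{k-1} = -\eta(\lambda\X_{k-1} + \bL_{k-1,\varepsilon}^{-1/(2p)}\M_{k-1}\bR_{k-1,\varepsilon}^{-1/(2q)})$, so I need a bound on $\|\lambda\X_{k-1} + \bL_{k-1,\varepsilon}^{-1/(2p)}\M_{k-1}\bR_{k-1,\varepsilon}^{-1/(2q)}\|_F^2$ in terms of $\|\lambda\bL_{k-1,\varepsilon}^{1/(4p)}\X_{k-1}\bR_{k-1,\varepsilon}^{1/(4q)} + \bL_{k-1,\varepsilon}^{-1/(4p)}\M_{k-1}\bR_{k-1,\varepsilon}^{-1/(4q)}\|_F^2$. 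This is exactly the identity derived in (\ref{equ2}) of the main proof: factoring out $\bL_{k-1,\varepsilon}^{-1/(4p)}$ on the left and $\bR_{k-1,\varepsilon}^{-1/(4q)}$ on the right, then invoking Lemma \ref{matrix-monotonic} together with condition (\ref{L-assump}) (which gives $\bL_{k-1,\varepsilon}^{-1/(2p)}\preceq \hat\varepsilon^{-1/(2p)}\I_m$ and similarly for $\bR$) and using $\frac{1}{2p}+\frac{1}{2q}=\frac{1}{2}$, yields a factor of $1/\sqrt{\hat\varepsilon}$.

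Combining these pieces gives a coefficient of $\frac{\theta^2 L^2\eta^2}{(1-\theta)\sqrt{\hat\varepsilon}}$ in front of the structured norm term, which I would simply upper bound by $\frac{L^2\eta^2}{(1-\theta)\sqrt{\hat\varepsilon}}$ since $\theta\le 1$, yielding exactly the claimed inequality (\ref{GM-dif-equ}).
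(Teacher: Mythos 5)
Your proof is correct and takes essentially the same route as the paper's: an exact decomposition of the momentum recursion so that the conditional expectation kills the cross term with the mean-zero noise, Young's inequality with weight $(1-\theta)/\theta$ to convert $\theta^2$ into $\theta$ and pick up $\frac{1}{1-\theta}$, Lipschitz smoothness to pass to $\|\X_k-\X_{k-1}\|_F^2$, and the preconditioned bound (\ref{equ2}) with the crude bound $\theta^2\leq 1$ to finish. The only cosmetic difference is that the paper first expands $\M_k-\nabla f(\X_k)$ into three terms (isolating $\Gamma_k$) and then applies Young to the remaining pair, whereas you take the expectation first and then split $\nabla f(\X_k)-\M_{k-1}$; the resulting bound is identical.
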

\begin{proof}
Denoting $\Gamma_k=\G_k-\nabla f(\X_k)$, we have $\E_k\left[\Gamma_k\big|\F_{k-1}\right]=0$ and $\E_k\left[\left\|\Gamma_k\right\|_F^2\big|\F_{k-1}\right]\leq\sigma^2$ from (\ref{G-assump1}). From the update of $\M_k$, we have
\begin{eqnarray}
\begin{aligned}\notag
\M_k-\nabla f(\X_k)=& \theta \M_{k-1} + (1-\theta)\G_k - \nabla f(\X_k)\\
=& \theta \left(\M_{k-1}-\nabla f(\X_{k-1})\right) + (1-\theta)\left(\nabla f(\X_k)+\Gamma_k\right) - \nabla f(\X_k) + \theta\nabla f(\X_{k-1})\\
=& \theta \left(\M_{k-1}-\nabla f(\X_{k-1})\right) + (1-\theta)\Gamma_k - \theta\left(\nabla f(\X_k)-\nabla f(\X_{k-1})\right)
\end{aligned}
\end{eqnarray}
and
\begin{eqnarray}
\begin{aligned}\notag
&\E_k\left[\left\|\M_k-\nabla f(\X_k)\right\|_F^2\big|\F_{k-1}\right]\\
=&\left\|\theta \left(\M_{k-1}-\nabla f(\X_{k-1})\right) - \theta\left(\nabla f(\X_k)-\nabla f(\X_{k-1})\right)\right\|_F^2 + (1-\theta)^2\E_k\left[\left\|\Gamma_k\right\|_F^2\big|\F_{k-1}\right]\\
\leq& \theta^2\left(1+\frac{1-\theta}{\theta}\right)\left\|\M_{k-1}-\nabla f(\X_{k-1})\right\|_F^2 + \theta^2\left(1+\frac{\theta}{1-\theta}\right)\left\|\nabla f(\X_k)-\nabla f(\X_{k-1})\right\|_F^2\\
& + (1-\theta)^2\E_k\left[\left\|\Gamma_k\right\|_F^2\big|\F_{k-1}\right]\\
\leq& \theta\left\|\M_{k-1}-\nabla f(\X_{k-1})\right\|_F^2 + \frac{1}{1-\theta}\left\|\nabla f(\X_k)-\nabla f(\X_{k-1})\right\|_F^2 + (1-\theta)^2\sigma^2\\
\leq& \theta\left\|\M_{k-1}-\nabla f(\X_{k-1})\right\|_F^2 + \frac{L^2}{1-\theta}\left\|\X_k-\X_{k-1}\right\|_F^2 + (1-\theta)^2\sigma^2\\
=& \theta\left\|\M_{k-1}-\nabla f(\X_{k-1})\right\|_F^2 + \frac{L^2\eta^2}{1-\theta}\left\|\lambda\X_{k-1}+\bL_{k-1,\varepsilon}^{-\frac{1}{2p}}\M_{k-1}\bR_{k-1,\varepsilon}^{-\frac{1}{2q}}\right\|_F^2 + (1-\theta)^2\sigma^2\\
\leq& \theta\hspace*{-0.08cm}\left\|\hspace*{-0.02cm}\M_{k-1}\hspace*{-0.12cm}-\hspace*{-0.11cm}\nabla\hspace*{-0.05cm} f\hspace*{-0.02cm}(\hspace*{-0.02cm}\X_{k-1}\hspace*{-0.02cm})\hspace*{-0.02cm}\right\|_F^2 \hspace*{-0.08cm}+\hspace*{-0.08cm} \frac{L^2\eta^2}{(\hspace*{-0.05cm}1\hspace*{-0.08cm}-\hspace*{-0.08cm}\theta\hspace*{-0.02cm})\hspace*{-0.02cm}\hat\varepsilon^{1\hspace*{-0.04cm}-\hspace*{-0.02cm}\omega\hspace*{-0.02cm}/\hspace*{-0.02cm}2}}\hspace*{-0.08cm}\left\|\lambda\bL_{k-1,\varepsilon}^{\frac{1}{4p}}\X_{k-1}\hspace*{-0.04cm}\bR_{k-1,\varepsilon}^{\frac{1}{4q}}\hspace*{-0.08cm}+\hspace*{-0.08cm}\bL_{k-1,\varepsilon}^{-\frac{1}{4p}}\M_{k-1}\hspace*{-0.04cm}\bR_{k-1,\varepsilon}^{-\frac{1}{4q}}\right\|_F^2 \hspace*{-0.1cm}+\hspace*{-0.1cm} (\hspace*{-0.05cm}1\hspace*{-0.09cm}-\hspace*{-0.08cm}\theta\hspace*{-0.02cm})^2\hspace*{-0.04cm}\sigma^2\hspace*{-0.04cm},
\end{aligned}
\end{eqnarray}
where we use (\ref{equ2}) in the last inequality but replacing $\frac{1}{p}+\frac{1}{q}=1$ by $\frac{1}{p}+\frac{1}{q}=2-\omega$.
\end{proof} 

The following is the proof of Lemma \ref{condition2-lemma}.
\begin{proof}
When $\G_{i,j}\sim \mathcal N(\mu,\xi^2)$, we have
\begin{eqnarray}
\begin{aligned}\notag
\E\hspace*{-0.04cm}\left[\left(\G\G^T\right)_{p,q}\right]\hspace*{-0.04cm}=\hspace*{-0.04cm}\E\hspace*{-0.04cm}\left[\sum_{j=1}^n\G_{p,j}\G_{q,j}\right]\hspace*{-0.04cm}=\hspace*{-0.04cm}\sum_{j=1}^n\E\hspace*{-0.04cm}\left[\G_{p,j}\G_{q,j}\right]\hspace*{-0.04cm}=\hspace*{-0.04cm}\sum_{j=1}^n\E\hspace*{-0.04cm}\left[\G_{p,j}\right]\E\left[\G_{q,j}\right]\hspace*{-0.04cm}=\hspace*{-0.04cm}n\mu^2\quad\mbox{if}\quad p\neq q
\end{aligned}
\end{eqnarray}
and
\begin{eqnarray}
\begin{aligned}\notag
\E\left[\left(\G\G^T\right)_{p,q}\right]=\sum_{j=1}^n\E\left[\G_{p,j}^2\right]=\sum_{j=1}^n\left(\E\left[\left(\G_{p,j}-\mu\right)^2\right]+\mu^2\right)=n(\xi^2+\mu^2)\quad\mbox{if}\quad p=q.
\end{aligned}
\end{eqnarray}
So 
\begin{eqnarray}
\begin{aligned}\notag
\E\left[\G\G^T\right]=n\mu^2\1_m\1_m^T + n\xi^2\I_m\succeq n\xi^2\I_m,
\end{aligned}
\end{eqnarray}
where $\1_m\in\mathbb R^m$ is the vector with all ones. We also have
\begin{eqnarray}
\begin{aligned}\notag
\E\left[\|\G\|_F^2\right]=\sum_{i=1}^m\sum_{j=1}^n \E\left[\G_{i,j}^2\right]=\sum_{i=1}^m\sum_{j=1}^n \left(\E\left[\left(\G_{i,j}-\mu\right)^2\right]+\mu^2\right)=mn(\xi^2+\mu^2).
\end{aligned}
\end{eqnarray}
So we have
\begin{eqnarray}
\begin{aligned}\notag
\E\left[\G\G^T\right]\succeq n\xi^2\I_m = \frac{\xi^2}{m(\xi^2+\mu^2)}\E\left[\|\G\|_F^2\right]\I_m.
\end{aligned}
\end{eqnarray}
Similarly, we also have
\begin{eqnarray}
\begin{aligned}\notag
\E\left[\G^T\G\right]\succeq m\xi^2\I_n=\frac{\xi^2}{n(\xi^2+\mu^2)}\E\left[\|\G\|_F^2\right]\I_n.
\end{aligned}
\end{eqnarray}
\end{proof}

\section{Experiments}\label{sec-exp}
In this section, we conduct experiments on real-world deep learning tasks to examine whether our theoretical claims are reflected in practical training dynamics. Specifically, we examine the two kinds of relationships that make our convergence rate practically meaningful: the nuclear-to-Frobenius norm ratio of the full gradient and the effective spectral floor $\hat\varepsilon$ of the Shampoo preconditioners relative to the noise-dependent scale $\hat{\sigma}^2/(m+n)$ and the numerical regularizer $\varepsilon$. We pretrain the GPT-2 \citep{radford2019language} model from scratch on the OpenWebText dataset \citep{Gokaslan2019OpenWeb} to validate these claims\footnote{Our code is available at https://github.com/adonis-dym/Convergence-Rate-AdamW-Style-Shampoo}.


A central quantity in our empirical evaluation is the full gradient $\nabla f(\X)$, which is required both for computing the gradient norm ratio and for estimating the stochastic gradient noise $\sigma^2$. Following the protocol of \citet[Section~E]{Li-2025-nips}, we alternately switch between \textit{training phases} and \textit{logging phases}, and use the logging phases to periodically approximate $\nabla f(\X)$ via accumulated large batch gradients. This interleaved design allows us to measure the full gradient related quantities needed in the analysis without interfering with the normal training dynamics.



Following common practice in non-diagonal preconditioning methods
\citep{muon2024,distribute-shampoo-20,distribute-shampoo-23}, we apply the
AdamW-style Shampoo only to non-embedding two-dimensional
parameters, while using AdamW for the remaining parameters. Concretely,
for a Transformer block (omitting the multi-head structure)
the attention and feed-forward modules can be written as
\begin{equation}\notag
    \operatorname{Attn}(\mathbf{X})
    =
    \operatorname{softmax}\left(
    \frac{\mathbf{X}\mathbf{W}_Q (\mathbf{X}\mathbf{W}_K)^\top}{\sqrt{d}}
    \right)\mathbf{X}\mathbf{W}_V\mathbf{W}_O\quad\mbox{and}\quad
    \operatorname{MLP}(\mathbf{X})
    =
    \phi(\mathbf{X}\mathbf{W}_1) \mathbf{W}_2,
\end{equation}
where $\phi$ denotes the activation function. The Shampoo-optimized
parameters therefore consist of four representative matrix classes:
(i) the attention input projection matrices, corresponding to the
merged QKV projection in GPT-2, i.e. $\operatorname{concat}(\mathbf{W}_Q\mathbf{W}_K\mathbf{W}_V$); (ii) the attention output projection
matrix $\mathbf{W}_O$; (iii) the first layer in MLP module $\mathbf{W}_1$; and (iv) the second layer in MLP module $\mathbf{W}_2$. For clarity and brevity, we report the averaged results within each class.

\begin{figure}[t]
  \centering
  \includegraphics[width=\linewidth]{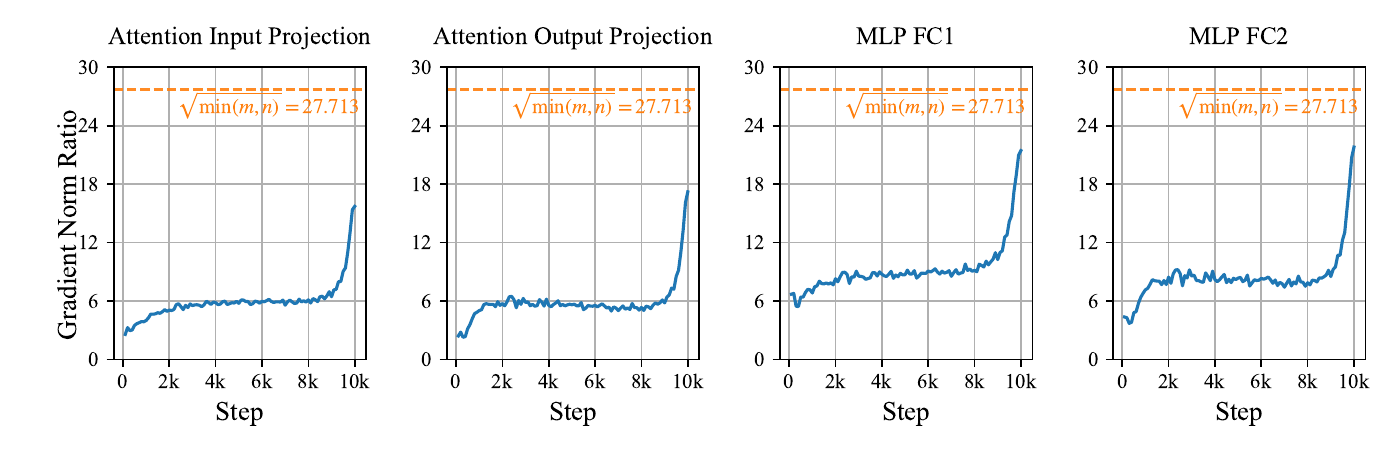}
  \caption{
  The gradient norm ratio 
  $\lVert \nabla f(\mathbf{X}) \rVert_* / \lVert \nabla f(\mathbf{X}) \rVert_F$
  during GPT-2 pretraining on OpenWebText for each class of Shampoo-handled parameters.
  The dashed horizontal line indicates the value of $\sqrt{\min(m,n)}$ for each matrix shape.
  }
  \label{fig:nuc_fro_ratio}
\end{figure}

For the training recipe, we use the standard Megatron-LM GPT-2 Small configuration \citep{shoeybi2019megatron} and follow the related works for shampoo settings \citep{distribute-shampoo-20,distribute-shampoo-23} with minimal modifications. The model contains 12 Transformer layers, hidden size 768, 12 attention heads, and sequence length 1024. We set the learning rate to  $10^{-2}$ and weight decay to 0.05. For the Shampoo-preconditioned parameters, we set $\theta=0.9$, $\beta=0.999$, $\varepsilon=10^{-12}$, and $p=q=2$; for the remaining parameters handled by AdamW, we use $(\beta_1,\beta_2)=(0.9,0.95)$. The learning rate is linearly warmed up over the first 1000 training steps and then decayed according to a cosine schedule over the remaining 9000 steps. We set the global batch size to 640 and apply gradient clipping with threshold 1.0, and complete the training with 8 NVIDIA H200 GPUs.

Figure~\ref{fig:nuc_fro_ratio} reports the value of the gradient norm ratio
$\lVert \nabla f(\mathbf{X}) \rVert_* / \lVert \nabla f(\mathbf{X}) \rVert_F$
for the four classes of Shampoo-handled matrices.
Across all parameter categories, we observe that this ratio stays at the same magnitude as the theoretical upper bound
$\sqrt{\min(m,n)}$ throughout training, indicated by the dashed reference lines.
This behavior suggests that the relationship
$\lVert \nabla f(\mathbf{X}) \rVert_* = \Theta(\sqrt{\min\{m,n\}})\,\lVert \nabla f(\mathbf{X}) \rVert_F$ indeed holds in practice.

Next, we turn to the empirical estimation of the stochastic gradient noise, and compare the noise-dependent scale $\hat{\sigma}^2/(m+n)$ with $\varepsilon$ and $\hat\varepsilon$. To make the noise-dependent scale tractable, we use the approximation $\hat{\sigma}^2 \approx \sigma^2 \approx \|\G-\nabla f(\X)\|_F^2$, where $\G$ is the mini-batch stochastic gradient and $\nabla f(\X)$ is approximated by the accumulated large batch gradient from the logging phase. We then compute the corresponding scale $\hat{\sigma}^2/(m+n)$ for each Shampoo-handled matrix. To estimate the effective spectral floor $\hat{\varepsilon}$, we record the minimum eigenvalues of the regularized left and right preconditioner matrices $\bL_{k,\varepsilon}$ and $\bR_{k,\varepsilon}$, respectively. These quantities are readily available during training, since they are produced as a byproduct of the eigendecomposition used to compute the matrix inverse roots in Algorithm \ref{alg1}.

\begin{figure}[t]
  \centering
  \includegraphics[width=\linewidth]{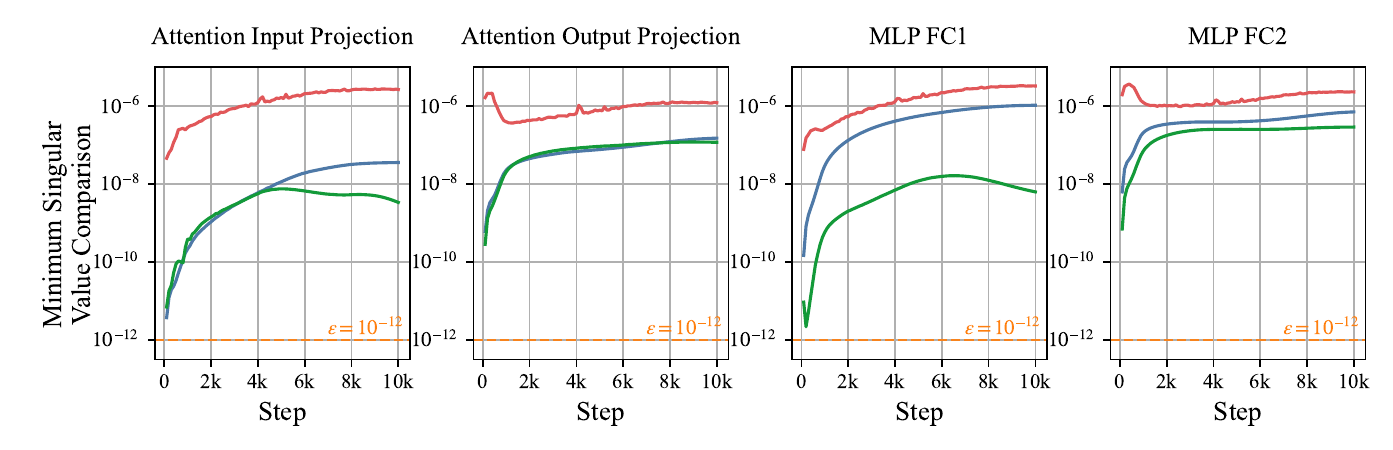}
\caption{
Comparison of the minimum eigenvalues with respect to the stochastic gradient noise during GPT-2 pretraining on OpenWebText dataset.
The red curve indicates the noise-dependent scale $\hat\sigma^2/(m+n)$, while the blue and green curves
correspond to the minimum eigenvalues of the regularized left and right preconditioner matrices
$\bL_{k,\varepsilon}$ and $\bR_{k,\varepsilon}$, respectively.
All quantities are plotted on a logarithmic scale.
}

  \label{fig:min_sval_noise}
\end{figure}


We show the results in Figure~\ref{fig:min_sval_noise}. Across all four classes of Shampoo-handled parameters, the minimum eigenvalues
of both $\bL_{k,\varepsilon}$ and $\bR_{k,\varepsilon}$ are consistently several
orders of magnitude larger than the numerical regularizer
$\varepsilon=10^{-12}$. Moreover, these estimates of $\hat{\varepsilon}$ remain
comparable to the noise-dependent scale $\hat{\sigma}^2/(m+n)$ throughout
training. This indicates that the theoretical bound \eqref{rate1} derived in Theorem \ref{main-theorem} aligns much closer to the favorable regime in \eqref{rate3} than the worst-case rate \eqref{rate2}.

\subsection{Comparison with the Other Optimizers}

The AdamW-style Shampoo, originally implemented in \citep{distribute-shampoo-23}, is a classical optimizer that won the AlgoPerf neural network training competition \citep{AlgoPerf-competition-25}. It has been adopted as a baseline for comparison in the literature. For instance, \citet{Frans-2025} compared different optimizers on GPT‑2 using the OpenWebText dataset. Table 2 of \citep{Frans-2025} shows that Shampoo (in the AdamW-style) outperforms AdamW and achieves performance comparable to Muon \citep{muon2024}. SOAP \citep{soap-2025-iclr} and Splus \citep{splus-2025-nips}, which are both extensions of Shampoo, achieve even better performance in \citep{Frans-2025}. Therefore, we do not include additional optimizer comparisons in this primarily theoretical paper.

\section{Conclusion}

This paper studies the convergence of AdamW-style Shampoo with both one-sided and two-sided preconditioning. When the exponents of the two preconditioners sum to $1/2$, we establish the convergence rate $\frac{1}{K}\sum_{k=1}^K\E\left[\|\nabla f(\X_k)\|_*\right]\leq \bO(\frac{\sqrt{m+n}C}{K^{1/4}})$ measured by nuclear norm. This rate is analogous to the optimal $\frac{1}{K}\sum_{k=1}^K\E\left[\|\nabla f(\X_k)\|_F\right]\leq \bO(\frac{C}{K^{1/4}})$ convergence rate of SGD in the ideal case where $\|\nabla f(\X)\|_*= \Theta(\sqrt{\min\{m,n\}})\|\nabla f(\X)\|_F$ and $m$ and $n$ are of comparable magnitude. Then, we extend our analysis to the setting where the preconditioning exponents do not sum to $1/2$ and establish convergence with a more involved rate.

\bibliography{example_paper}
\bibliographystyle{icml2020}

\end{document}